\newcommand{\lvt}{\left|\kern-1.35pt\left|\kern-1.3pt\left|}
\newcommand{\rvt}{\right|\kern-1.3pt\right|\kern-1.35pt\right|}
\newtheorem{thm}{Theorem}[section]
\newtheorem{cor}[thm]{Corollary}
\newtheorem{prop}[thm]{Proposition}
\theoremstyle{remark}
 \def\la{{\langle}}
 \def\ra{{\rangle}}
 \def\a{{\alpha}}
 \def\b{{\beta}}
 \def\g{{\gamma}}
 \def\t{{\theta}}
 \def\l{{\lambda}}
 \def\s{\sigma}
 \def\la{{\langle}}
 \def\ra{{\rangle}}
 \def\CH{{\mathcal H}}
 \def\CL{{\mathcal L}}
 \def\CP{{\mathcal P}}
 \def\NN{{\mathbb N}}
 \def\RR{{\mathbb R}}
\def\lla{\langle{\kern-2.5pt}\langle}      
\def\rra{\rangle{\kern-2.5pt}\rangle}
\newcommand{\wh}{\widehat}
\def\f{\frac}
\begin{document}
 
\title{Orthogonal structure on a quadratic curve}

\author{Sheehan Olver}
\address{Department of Mathematics\\
Imperial College\\
 London \\
 United Kingdom  }\email{s.olver@imperial.ac.uk}

\author{Yuan Xu}
\address{Department of Mathematics\\ University of Oregon\\
    Eugene, Oregon 97403-1222.}\email{yuan@uoregon.edu}

\thanks{The first author was supported in part by a Leverhulme Trust Research Project Grant and the second author was supported in part by NSF Grant DMS-1510296. Both authors would like to thank the Isaac Newton Institute for Mathematical Sciences for support and hospitality during the programmes ``Approximation, sampling and compression in data science'' and ``Complex analysis: techniques, applications and computations''  when work on this paper was  undertaken. This work was supported by EPSRC grant number EP/R014604/1.}

\date{\today}
\keywords{Orthogonal polynomials, quadratic curve, plane, Fourier orthogonal expansions}
\subjclass[2000]{42C05, 42C10, 33C50}

\begin{abstract} 
Orthogonal polynomials on quadratic curves in the plane are studied. These include orthogonal polynomials
on ellipses, parabolas, hyperbolas, and two  lines. For  an integral with respect 
to an appropriate weight function defined on any quadratic curve, an explicit basis of orthogonal polynomials is 
constructed in terms of two families of orthogonal polynomials in one variable. 
Convergence of the Fourier orthogonal expansions is also studied in each case.  We discuss applications 
to the Fourier extension problem, interpolation of functions with singularities or near singularities, and
the solution of Schr\"odinger's equation with non-differentiable or nearly-non-differentiable potentials. 
\end{abstract}

\maketitle

\section{Introduction}
\setcounter{equation}{0}

The purpose of this work is to study orthogonal polynomials of two variables  defined 
on a quadratic curve in the plane. Every quadratic curve in the plane satisfies an equation of the form
$$
     \gamma(x,y):=  a_{1,1} x^2 + 2 a_{1,2} xy + a_{2,2} y^2 + 2 a_{1,3} x + 2 a_{2,3} y + a_{3,3}  =0
$$
for some constants $a_{i,k}$.
Let  $\Omega = \Omega_\gamma:= \{(x,y): \gamma(x,y) = 0\} \subset \RR^2$ be  the trace of the curve in 
the plane. Let $\varpi(x,y)$ be a nonnegative weight function defined on $\Omega$ and satisfying
$\int_\Omega \varpi(x,y) d\s(x,y) > 0$, where $d\s$ is the Lebesgue measure on the boundary $\Omega$. We consider the bilinear form defined by 
\begin{equation}\label{eq:ipd}
  \la f, g \ra = \int_{\Omega} f(x,y) g(x,y) \varpi(x,y) d \s(x,y).
\end{equation}
 Since $\la f, g \ra =0$ 
if either $f$ or $g$ belongs to the polynomial ideal $\langle \gamma \rangle$, it is an inner product only 
on the space $\RR[x,y]/ \langle \gamma \rangle$; that is, polynomials modulo the ideal $\la \gamma \ra$. 

When $\gamma(x,y) = 1-x^2-y^2$, so that $\Omega$ is the unit circle, the orthogonal structure is well-understood. 
In particular, when $\varpi(x,y) =1$, the orthogonal polynomials are spherical harmonics, 
which are homogeneous polynomials of two variables, their restriction on the circle 
are $\cos (n \t)$ and $\sin (n \t)$ where $x = \cos \t$ and $y = \sin \t$, and the corresponding Fourier orthogonal expansions are the usual Fourier 
series. This case has served as the model for our recent study \cite{OX} of orthogonal polynomials on a 
wedge, defined as two line segments that share a common endpoint. In this paper we shall construct orthogonal 
polynomials for other quadratic curves. 
 
Under an affine change of variables, quadratic curves in the plane are classified into five classes; three
non-degenerate cases: ellipses, parabolas, and hyperbolas; and two degenerate cases: two intersecting real 
lines, and two parallel real lines. Since an affine change of variables does not change the degree of 
a polynomial in two variables, we only need to study orthogonal polynomials on a standard quadratic curve
for each class of quadratic curves. Our choice of  standard quadratic curves are the following: 
\begin{enumerate} [\qquad (i)]
\item ellipse: $x^2+y^2 =1$; 
\item parabola: $y = x^2$;
\item hyperbola: $x^2 - y^2 =1$;
\item two intersection lines: $x=0$ and $y=0$; and,
\item parallel lines: $y=-1$ and $y=1$.
\end{enumerate}


As in the known  cases of orthogonal polynomials on the unit circle and on a wedge, the space of 
orthogonal polynomials of degree $n$ has dimension $2$ when $n \ge 1$ for each quadratic curve. 
Assuming that $\varpi$ satisfies certain symmetry properties, the two-dimensional
space of orthogonal polynomials on each quadratic curve can be expressed in terms of
two different families of orthogonal polynomials in one variable, whose structure is well-understood.
Furthermore, we shall relate the Fourier orthogonal expansions on the quadratic curve to those on the real
line so that the convergence of the former can be deduced from the latter. Our approach will follow closely 
the development in \cite{OX} and relies heavily on symmetry of the domain and the weight function.  

We consider three applications of the results. The first is usage in the Fourier extension problem, essentially
rephrasing the known results of \cite{MH,H} in terms of the orthogonal polynomials on quadratic curves.
The second  application is interpolation of functions on the real line with singularities of the form $|x|$, 
$\sqrt{x^2+ \epsilon^2}$, and $1/x$. 
For example, we consider functions  on the interval $[-1,1]$ of the form
$$
f(x) = f(\sqrt{x^2 + \epsilon^2}, x)
$$
where $f(x,y)$, defined on the hyperbola $x^2 = y^2 + \epsilon^2$, is entire in $x$ and $y$. 
We will see that we can calculate the two-variable polynomial interpolant of $f(x,y)$ using   orthogonal polynomials
via a suitably constructed quadrature rule. The resulting approximation
converges super-exponentially fast,  at a rate that is uniformly bounded as $\epsilon \rightarrow 0$.
This is in  contrast to using only univariate orthogonal polynomials (e.g. an interpolant at Chebyshev points) 
whose convergence degenerates to algebraic  as $\epsilon \rightarrow 0$, as
the  convergence rate is dictated by the size of the Bernstein ellipse in the complex plane in which
$f(x)$ is analytic.  The final application is solving Schr\"odinger's equation when the potential is singular
or nearly singular. In particular, we consider the equation
$$
-0.1^2 u''(t) + V(t) u =\lambda u
$$
with $V(t) = \sqrt{t^2 + \epsilon^2} + (t-0.1)^2$ using Dirichlet conditions, showing that the
eigenstates can be calculated robsutly for  small $\epsilon$ by using a basis built out of
orthogonal polynomials on the hyperbola.

The paper is organized as follows. The next section is preliminary, where we fix notations and lay down 
the groundwork. Orthogonal structure on the quadratic curves will be discussed in the next five sections,
from Section 3 to Section 7, each standard quadratic curve is treated in its own section, in the order of 
(i) -- (v). In Section 8 we show that interpolation with this basis is feasible via suitably constructed quadrature rules.   Applications of the results are  investigated in Section 9.   
 
\section{Preliminary}
\setcounter{equation}{0}
 
Any second degree curve in the plane satisfies an equation of the form
$$
     \gamma(x,y):=  a_{1,1} x^2 + 2 a_{1,2} xy + a_{2,2} y^2 + 2 a_{1,3} x + 2 a_{2,3} y + a_{3,3} =0.
$$
Let $A = (a_{i,j})_{i,j =1}^3$ with $a_{j,i} := a_{i,j}$. The curve is called non-degenerate if $\det (A) \ne 0$ and
degenerate if $\det(A) = 0$. Let $\Delta:=  a_{1,1} a_{2,2} - a_{1,2}^2$. In the non-degenerate case, there are 
three classes of curves, ellipse ($\Delta > 0$), parabola ($\Delta= 0$), and hyperbola ($\Delta < 0$). In the 
degenerate case,  there are two classes, two intersecting real lines ($\Delta > 0$) and two parallel lines ($\Delta =0$).  
In the last case, we assume that the two parallel lines are distinct. If the two lines coincide, then an appropriate 
rotation and translation reduces the problem to the real line. Orthogonal polynomials on the real line have been studied extensively and are well-understood (cf. \cite{Sz}).

Let $\Pi_n^2$ denote the space of polynomials of degree at most $n$ in two real variables. Let $\gamma$ be 
a quadratic curve. With respect to the bilinear form \eqref{eq:ipd} defined on $\g$, a polynomial $P \in \Pi_n^2$ 
is called an orthogonal polynomial on the quadratic curve $\gamma$ if $\la P, Q\ra = 0$ for all polynomials 
$Q \in \Pi_m^2$ with $m < n$ and $\la P, P \ra > 0$. Let $\CH_n = \CH_n(\varpi) \subset \Pi_n^2$ be the space of orthogonal 
polynomials of degree $n$ on the quadratic curve.  

As mentioned in the introduction, we only need to consider a standard quadratic curve for each class. For each
quadratic curve $\gamma$ in (i) -- (v), the bilinear form \eqref{eq:ipd} defined on $\Omega_\g$ is an inner 
product on the space $\RR[x,y]/ \langle \gamma \rangle$ of polynomials modulo the ideal $\la \gamma\ra$. 
Using the explicit formula of $\gamma$ for each standard form, it is easy to see that the following
proposition holds. 

\begin{prop} 
The space $\CH_n$ has dimension $\dim \CH_0 =1$ and $\dim \CH_n = 2$ for $n \ge 1$. 
\end{prop}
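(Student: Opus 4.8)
The plan is to determine, for each of the five standard curves, a convenient monomial basis for the quotient ring $\RR[x,y]/\la\gamma\ra$ graded by degree, and then to count how many linearly independent residues of degree exactly $n$ survive. Since an affine change of variables preserves polynomial degree and the bilinear form remains an inner product on $\RR[x,y]/\la\gamma\ra$ (as noted just before the statement), it suffices to work case by case with the explicit $\gamma$. The key observation is that in each case the defining relation $\gamma=0$ lets us rewrite one "leading" monomial of degree $2$ in terms of monomials of lower degree together with monomials that we keep in the basis, so that modulo $\la\gamma\ra$ every polynomial reduces to a combination of a short list of surviving monomials.

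Concretely, for the ellipse $x^2+y^2=1$ we may eliminate $y^2$, so the residues are spanned by $\{1\}\cup\{x^k, x^{k-1}y : k\ge 1\}$, giving exactly two new monomials $x^n$ and $x^{n-1}y$ in degree $n\ge 1$; for the parabola $y=x^2$ we eliminate $y$ entirely, leaving $\{x^k, x^{k-1}y\}$ again — here $x^{n}$ has degree $n$ and $x^{n-2}y$ (of total degree $n$) reduces to $x^n$, so one must instead keep $x^n$ and $yx^{n-1}$, which have degrees $n$ and $n+1$; this requires a small regrading argument, matching the fact that on $y=x^2$ the natural degree filtration is generated differently. For the hyperbola $x^2-y^2=1$ the computation is essentially identical to the ellipse. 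For the two intersecting lines $xy=0$ the ideal is $\la xy\ra$ and a polynomial reduces to $p(x)+q(y)$ with the constant terms identified, so degree $n\ge 1$ contributes exactly $x^n$ and $y^n$; for the two parallel lines $y^2=1$ we eliminate $y^2$, leaving $p(x)+yq(x)$, so degree $n$ contributes $x^n$ and $yx^{n-1}$. In every case one checks $\dim\CH_0=1$ (constants) and that the degree-$n$ piece of the quotient is two-dimensional for $n\ge1$.

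To pass from "the degree-$n$ graded piece of the quotient has dimension $2$" to "$\dim\CH_n=2$" I would argue as follows. Because $\la\cdot,\cdot\ra$ is a genuine inner product on $\RR[x,y]/\la\gamma\ra$, apply Gram–Schmidt to the filtration $\Pi_0^2\subset\Pi_1^2\subset\cdots$ (images in the quotient); the dimension jump of $\Pi_n^2/\la\gamma\ra$ over $\Pi_{n-1}^2/\la\gamma\ra$ is precisely the number of genuinely new residue classes of degree $\le n$ not already of degree $\le n-1$, which the monomial bookkeeping above shows to be $2$ for $n\ge1$ and $1$ for $n=0$. Thus $\CH_n$, the orthogonal complement of $\Pi_{n-1}^2/\la\gamma\ra$ inside $\Pi_n^2/\la\gamma\ra$, has exactly that dimension.

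I expect the only real subtlety to be the parabola (and, to a lesser extent, keeping the degree-counting honest in the parallel-lines case): there the total-degree grading on $\RR[x,y]$ does not descend to a grading on the quotient in the naive way, since $y\equiv x^2$ identifies a degree-one element with a degree-two element, so one must filter rather than grade and carefully identify which monomials represent a new degree. Once the correct reduced set of monomial representatives $\{1, x, x^2,\dots\}\cup\{y, yx, yx^2,\dots\}$ (or the analogue) is fixed for each curve, the dimension count is immediate and uniform, establishing $\dim\CH_0=1$ and $\dim\CH_n=2$ for all $n\ge1$.
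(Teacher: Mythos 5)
Your proposal is correct and is exactly the case-by-case reduction the paper has in mind (the paper omits the proof entirely, asserting only that it ``is easy to see'' from the explicit formula of $\gamma$ for each standard form). The only blemish is in your parabola bookkeeping: $x^{n-2}y$ has total degree $n-1$, not $n$, which is precisely why $x^n$ already lies in the image of $\Pi_{n-1}^2$ and the genuinely new residues at degree $n$ are $y^n\equiv x^{2n}$ and $xy^{n-1}\equiv x^{2n-1}$ --- but your filtered count and the conclusion $\dim\CH_0=1$, $\dim\CH_n=2$ are right.
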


An explicit basis for the space $\CH_n$ will be constructed for each class of quadratic curves in Sections 3--7. 
Let $\{Y_{n,1}, Y_{n,2}\}$ be an orthogonal basis of $\CH_n$. Then these two polynomials are orthogonal
to all polynomials of lower degrees and they are orthogonal to each other. Let $L^2(\varpi, \Omega)$ be the
$L^2$ space of Lebesgue integrable functions with finite norm $\|f\| = \la f, f\ra^{\f12}$. For 
$f \in L^2(\varpi, \Omega)$ its Fourier orthogonal series is defined by 
\begin{equation}\label{eq:Fourier}
     f =  \wh f_0 + \sum_{n=1}^\infty \left[ \wh f_{n,1} Y_{n,1} +   \wh f_{n,2} Y_{n,2} \right], 
\end{equation}
where 
$$
 \wh f_0:= \frac{ \la f,1\ra}{\la 1,1\ra} \quad \hbox{and}\quad 
     \wh f_{n,i}:= \frac{\la f, Y_{n,i} \ra}{\la Y_{n,i}, Y_{n,i} \ra}, \quad n \ge 1. 
$$
The partial sum operator $S_n f$ is defined by 
\begin{equation}\label{eq:Snf}
    S_n f :=  \wh f_0 + \sum_{k=1}^n \left[ \wh f_{k,1} Y_{k,1} +   \wh f_{k,2} Y_{k,2} \right]. 
\end{equation}
We shall study the convergence of the Fourier orthogonal series on quadratic curves. 

Our construction of orthogonal polynomials on the quadratic curves will heavilyuse  orthogonal polynomials
on the real line. Let $w$ be a nonnegative weight function on $\RR$. We let $p_n(w)$ denote an orthogonal 
polynomial of degree $n$ with respect to $w$ and let $h_n(w)$ denote the norm square of $p_n(w)$; more
precisely,  
$$
      \int_\RR  p_n(w;x) p_m(w; x) w(x) dx = h_n(w) \delta_{m,n}.
$$ 
Let $L^2(w)$ denote the $L^2$ space with respect to $w$ on $\RR$. The Fourier orthogonal series of 
$f \in L^2(w)$ is defined by 
\begin{equation} \label{eq:Fourier-series}
  f = \sum_{n=1}^\infty \wh f_n(w) p_n(w) \quad \hbox{with} \quad  \wh f_n(w) =
      \frac1{h_n(w)} \int_\RR f(y) p_n(w;y) w(y)dy. 
\end{equation}
The Parseval identity implies that 
$$
    \|f\|_{L^2(w,[0,1])}^2 = \sum_{n=0}^\infty \left|\wh f_n(w) \right|^2 h_n(w).
$$
The $n$-th partial sum of the Fourier orthogonal series with respect to $w$ is defined as 
\begin{equation} \label{eq:partial-sum}
  s_n(w;f)(x) := \wh f_0(w) p_0(w;x) +  \sum_{k=1}^n \wh f_k(w) p_k(w;x). 
\end{equation}

Our study of the Fourier orthogonal series on a quadratic curve consists of reducing the problem
to that of the Fourier orthogonal series on $\RR$. 

\section{Orthogonal polynomials on an ellipse}
\setcounter{equation}{0}

Each ellipse can be transformed to the unit circle under an affine transform. Hence, we only need to consider the
circle
$$
  \Omega = \{(x, y): x^2 + y^2 =1 \in \RR\} = \{(\cos \t, \sin \t): \t \in [0, 2\pi]\}.
$$ 
Orthogonal polynomials on circles are well studied. However, it should be pointed out that we are considering 
them as real polynomials in two variables, which are distinct from the thoroughly studied
Orthogonal Polynomials on the Unit Circle (OPUC)  \cite{BS}. The latter are analytic  polynomials in a 
complex $z$ variable and orthogonal with respect to an inner product defined on the unit circle in the 
complex plane. In our results, they are restrictions of homogeneous polynomials of two variables on the unit
circle in $\RR^2$. The result below is well--known, we include it here for completeness and also as an opening
to our study for other quadratic curves. 

Let $w$ be an even nonnegative weight function defined on $[-1,1]$. We consider the bilinear form defined by
\begin{align}
   \la f, g \ra &=  \int_{-\pi}^{\pi} f(\cos \t, \sin \t) g(\cos \t, \sin \t) w(\cos \t) d\t \notag\\
 &  = \int_{-1}^1 \Bigl[f(x,\sqrt{1-x^2}) g(x,\sqrt{1-x^2}) \notag\\
 &\qquad+
   	f(x,-\sqrt{1-x^2}) g(x,-\sqrt{1-x^2})  
\Bigr] {w(x) \over \sqrt{1-x^2}} dx \label{eq:ipd-circle}
\end{align}
which defines an inner product on the space $\RR[x,y]/\la x^2+y^2 -1\ra$. 

Let $\CP_n$ denote the space of homogeneous polynomials in two variables. Let $\CH_n(\varpi)$ denote the
space of orthogonal polynomials of degree $n$ with respect to this inner product.

\begin{thm}\label{thm:circlebasis}
Let $w(t)$ be an even weight function defined on $[-1,1]$. Define $w_{-\f12}(t) = (1-t^2)^{-\f12}w(t)$ and 
$w_{\f12}(t) = (1-t^2)^{\f12}w(t)$. The polynomials
\begin{equation}\label{eq:OPcircle}
     Y_{n,1} (x,y) =   p_n (w_{-\f12}; x) \quad \hbox{and}\quad Y_{n,2} (x,y) = y p_{n-1}(w_{\f12}; x)
\end{equation}
are homogeneous polynomials of degree $n$ in $(x,y)$ and they form an orthogonal basis of $\CH_n(\varpi)$
for all $n \ge 1$. Furthermore, 
\begin{equation}\label{eq:norm-circle}
  \la Y_{n,1}, Y_{n,1} \ra = 2 h_n(w_{-\f12}) \quad \hbox{and} \quad  \la Y_{n,2}, Y_{n,2} \ra = 2 h_{n-1}(w_{\f12}).
\end{equation}
\end{thm}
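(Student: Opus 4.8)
The plan is to verify three things in turn: that each $Y_{n,i}$ is a homogeneous polynomial of degree $n$ when restricted to the circle (more precisely, that it is the restriction of such a polynomial), that the four-element family $\{Y_{n,1},Y_{n,2}: n\geq 1\}\cup\{1\}$ consists of mutually orthogonal elements with respect to \eqref{eq:ipd-circle}, and that each $Y_{n,i}$ is orthogonal to all polynomials of lower degree; together with the dimension count $\dim\CH_n=2$ from the Proposition, this forces $\{Y_{n,1},Y_{n,2}\}$ to be an orthogonal basis of $\CH_n(\varpi)$. The norm formulas \eqref{eq:norm-circle} will drop out of the same computation.

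For the homogeneity claim, I would argue that on $\Omega$ one has $x^2+y^2=1$, so any polynomial in $x$ alone of degree $n$ agrees on $\Omega$ with a homogeneous polynomial of degree $n$ (replace $x^k$ by $x^k(x^2+y^2)^{(n-k)/2}$ if $n-k$ is even, and by $x^k(x^2+y^2)^{(n-k-1)/2}\cdot$ — wait, one needs the parity to work; in fact $p_n(w_{-1/2};x)$ has the same parity as $n$ since $w_{-1/2}$ is even, so only powers $x^k$ with $k\equiv n\pmod 2$ appear, and each lifts to a homogeneous degree-$n$ polynomial). Similarly $y\,p_{n-1}(w_{1/2};x)$: since $w_{1/2}$ is even, $p_{n-1}$ has parity $n-1$, so $y\,p_{n-1}(x)$ is a sum of monomials $y\,x^k$ with $k\equiv n-1\pmod 2$, i.e.\ $y x^k$ with $1+k\equiv n\pmod2$, each of which lifts to a homogeneous polynomial of degree $n$ on $\Omega$. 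This is the step where one must be slightly careful, but it is routine once the parity bookkeeping is set up.

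The orthogonality computation is the heart of the matter. Using the second form of the inner product in \eqref{eq:ipd-circle}, the $\sqrt{1-x^2}$ contributions combine cleanly: for two functions even in $y$, say $Y_{n,1}$ and $Y_{m,1}$, the two terms in the bracket are equal, giving $2\int_{-1}^1 p_n(w_{-1/2};x)p_m(w_{-1/2};x)\,\frac{w(x)}{\sqrt{1-x^2}}\,dx = 2\int_{-1}^1 p_n p_m\,w_{-1/2}\,dx = 2h_n(w_{-1/2})\delta_{n,m}$ by definition of $w_{-1/2}$ and orthogonality on the line. For $Y_{n,2}$ and $Y_{m,2}$, each carries one factor of $y=\pm\sqrt{1-x^2}$, so $y^2=1-x^2$ and again the two bracket terms agree, yielding $2\int_{-1}^1 p_{n-1}(w_{1/2};x)p_{m-1}(w_{1/2};x)(1-x^2)\,\frac{w(x)}{\sqrt{1-x^2}}\,dx = 2\int_{-1}^1 p_{n-1}p_{m-1}\,w_{1/2}\,dx = 2h_{n-1}(w_{1/2})\delta_{n,m}$. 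For the cross terms $\langle Y_{n,1},Y_{m,2}\rangle$ the integrand is odd in $y$, so the two bracket terms cancel and the integral vanishes; this also covers orthogonality to the constant $1=Y_{0}$. Finally, orthogonality to lower degrees: any $Q\in\Pi_{m}^2$ with $m<n$ restricted to $\Omega$ is, after using $x^2+y^2=1$, of the form $A(x)+yB(x)$ with $\deg A\leq m$ and $\deg B\leq m-1$; pairing against $Y_{n,1}=p_n(w_{-1/2};x)$ kills the $yB(x)$ part by the odd-in-$y$ cancellation and kills the $A(x)$ part because $\deg A<n$ and $p_n(w_{-1/2})\perp \Pi_{n-1}$ with respect to $w_{-1/2}$; pairing against $Y_{n,2}=y\,p_{n-1}(w_{1/2};x)$ kills the $A(x)$ part by the odd-in-$y$ cancellation and kills the $yB(x)$ part because $\deg B\leq m-1<n-1$ and $p_{n-1}(w_{1/2})\perp\Pi_{n-2}$ with respect to $w_{1/2}$ (here $y^2=1-x^2$ again converts the pairing to a weighted integral on the line).

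I do not anticipate a serious obstacle; the only point requiring genuine care is the reduction of an arbitrary polynomial on $\Omega$ to the canonical form $A(x)+yB(x)$ with controlled degrees, which is exactly the statement that $\RR[x,y]/\langle x^2+y^2-1\rangle$ has the monomial basis $\{1,x,x^2,\dots\}\cup\{y,yx,yx^2,\dots\}$ graded so that $x^k$ and $yx^{k-1}$ sit in degree $k$; this is also what underlies the Proposition's dimension count, so it may be invoked rather than reproved. With that in hand, everything else is the bookkeeping of even/odd symmetry in $y$ and the defining orthogonality of $p_n(w_{\pm1/2})$ on the line.
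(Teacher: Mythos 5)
Your proof is correct and follows essentially the same route as the paper's (which is only a brief outline citing the literature): the parity of $p_n$ for even $w$ gives the homogeneous lift, oddness in $y$ gives the cross-orthogonality, and the substitution $y^2=1-x^2$ converts the norms to $2h_n(w_{-1/2})$ and $2h_{n-1}(w_{1/2})$. The only difference is that you spell out the orthogonality to lower-degree polynomials via the reduction $Q\equiv A(x)+yB(x)$ modulo $\langle x^2+y^2-1\rangle$, a detail the paper leaves implicit.
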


\begin{proof}
This result is known; see, for example, \cite[Section 4.2]{DX}. We give an outline of the proof for completeness. 
Since $w(t)$ is even, the polynomial $p_n (w; t)$ is an even polynomial if $n$ is even and an odd polynomial
if $n$ is odd. Consequently, it is easy to see that both $Y_{n,1}$ and $Y_{n,2}$ are homogeneous polynomials of
degree $n$ in $(x,y)$. Since $\sin \t$ is odd, it follows readily that $\la Y_{n,1}, Y_{m,2} \ra = 0$ for all $n \ge 0$ and
$m \ge 1$. Moreover, 
\begin{align*}
  \la Y_{n,1}, Y_{n,1} \ra & = 2 \int_{-1}^{1} \left[ p_n(w_{-\f12}; t)\right] ^2 \frac{w(t)}{\sqrt{1-t^2}} dt = 2 h_n(w_{-\f12})        
\end{align*}
and, similarly, $\la Y_{n,2}, Y_{n,2} \ra = 2 h_{n-1}(w_{\f12})$.
\end{proof}

The most well-known example are the spherical harmonics, for which $w(t) =1$. The orthogonal polynomials are
\begin{align*}
  Y_{n,1}(x,y) & = \cos n \t =  T_n(x), \\
   Y_{n,2}(x,y) & = \sin n \t = 
      yU_{n-1}(x), 
\end{align*}
where $T_n(t)$ and $U_{n}(t)$ are the Chebyshev polynomials of the first and 
the second kind, respectively. 
The restrictions of $Y_{n,1}$ and $Y_{n,2}$ to the circle are the eigenfunctions
of $d^2/d \t^2$ (which is the Laplace--Beltrami operator on the circle) in the sense that they satisfy $u''(\t) = -n u (\t)$. 

Another family of examples are $h$-harmonics associated with the dihedral group $I_k$ of $k$-regular polygons 
in the plane (cf. \cite[Section 7.6]{DX}). For $k=1,2,\ldots$, the weight function in \eqref{eq:ipd-circle} is 
defined by 
$$
    w(\cos \t) = |\sin (k\t )|^{2 \a} |\cos (k \t)|^{2\b}, \qquad \a, \b  \ge 0.
$$
For each $k$, the corresponding polynomials $Y_{n,1}$ and $Y_{n,2}$ are the eigenfunctions of a second 
order differential-difference operator on the circle \cite[Section 7.6]{DX}.

We can connect the Fourier orthogonal series on the circle to the Fourier orthogonal series in terms of 
$w_{-\f12}$ and $w_{\f12}$ on the interval $[-1,1]$. Recall the definition of partial sum operators defined in 
\eqref{eq:Snf} and \eqref{eq:partial-sum}. 

\begin{thm} \label{thm:Snf-circle}
Let $f$ be a function defined on $\Omega$. Define 
$$
 f_e(x) := \frac{f(x,  \sqrt{1-x^2}) + f(x, -  \sqrt{1-x^2})}{2} 
 $$
 and
 $$
 f_o(x) := \frac{f(x, \sqrt{1-x^2})-f(x,- \sqrt{1-x^2})}{2 \sqrt{1-x^2}}.
$$
 Then 
\begin{align} \label{eq:Snf-circle}
  S_n f(x,y) \,= s_n(w_{-\f12}; f_e, x) + y s_{n-1} (w_{\f12}; f_o, x). 
 \end{align}
In particular, if $s_n(w_{-\f12}; f_e, x) \to f_e(x)$ and $s_{n-1} (w_{\f12}; f_o, x) \to f_o(x)$ as $n\to \infty$, then 
$S_n f(x,y) \to f(x,y)$ as $n \to \infty$. Furthermore, if $f\in L^2(\varpi, \Omega)$, then 
\begin{align}\label{eq:SnfL2-circle}
\|f - S_n(f) \|_{L^2(\varpi,\Omega)}^2 = &\,  \|f_e - s_{n} (w_{-\f12};f_e)  \|_{L^2(w_{-\f12})}^2   + \|f_o - s_{n-1} (w_{\f12};f_o) \|_{L^2(w_{\f12})}^2.  
\end{align}
\end{thm}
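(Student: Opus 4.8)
The plan is to decompose an arbitrary function on the circle into its even and odd parts (with respect to the reflection $y \mapsto -y$) and match these against the two families $Y_{n,1}$ and $Y_{n,2}$ from \thmref{thm:circlebasis}. First I would note that for any $f$ on $\Omega$, writing $f(x,y)$ in terms of $x$ and $y = \pm\sqrt{1-x^2}$, we have the pointwise identity $f(x,y) = f_e(x) + y\, f_o(x)$ on $\Omega$ (since $y^2 = 1-x^2$ is constant on the curve, only the linear-in-$y$ dependence survives). This is the key structural observation that makes the whole reduction work.

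Next I would compute the Fourier coefficients $\wh f_{n,1}$ and $\wh f_{n,2}$ and identify them with the one-variable coefficients $\wh{(f_e)}_n(w_{-\f12})$ and $\wh{(f_o)}_{n-1}(w_{\f12})$. Using the formula \eqref{eq:ipd-circle} for the inner product, $\la f, Y_{n,1}\ra = \la f, p_n(w_{-\f12};x)\ra$ splits into two terms over $y = \pm\sqrt{1-x^2}$; since $p_n(w_{-\f12};x)$ depends only on $x$, the sum collapses to $2\int_{-1}^1 f_e(x) p_n(w_{-\f12};x) \frac{w(x)}{\sqrt{1-x^2}}\,dx$, and the odd part $y f_o(x)$ contributes nothing because its product with an $x$-only function is odd in $y$ and cancels between the two branches. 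Combined with $\la Y_{n,1}, Y_{n,1}\ra = 2h_n(w_{-\f12})$ from \eqref{eq:norm-circle}, this gives $\wh f_{n,1} = \wh{(f_e)}_n(w_{-\f12})$. An entirely parallel computation, with the roles of even/odd reversed and an extra factor of $y$ producing the weight $(1-t^2)^{1/2}w = w_{\f12}$, yields $\wh f_{n,2} = \wh{(f_o)}_{n-1}(w_{\f12})$. Substituting these into the definition \eqref{eq:Snf} of $S_n f$ and using $Y_{n,1}(x,y) = p_n(w_{-\f12};x)$, $Y_{n,2}(x,y) = y p_{n-1}(w_{\f12};x)$ gives \eqref{eq:Snf-circle} directly, and the pointwise convergence statement is then immediate.

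For the $L^2$ identity \eqref{eq:SnfL2-circle}, I would apply the Parseval/Bessel orthogonality directly: $f - S_n f = (f_e - s_n(w_{-\f12};f_e)) + y(f_o - s_{n-1}(w_{\f12};f_o))$, and the two summands are orthogonal in $L^2(\varpi,\Omega)$ because one is even and the other odd in $y$ (the cross term integrates to zero over the two branches). Then $\|f - S_n f\|^2$ splits as the sum of the two squared norms, and each of those, via the same branch-splitting of \eqref{eq:ipd-circle}, equals the corresponding one-variable $L^2$ norm: the even piece contributes $2\int_{-1}^1 (\cdot)^2 \frac{w}{\sqrt{1-x^2}}\,dx$ against $w_{-\f12}$, and the odd piece, carrying $y^2 = 1-x^2$, contributes $2\int_{-1}^1(\cdot)^2 (1-x^2)\frac{w}{\sqrt{1-x^2}}\,dx$ against $w_{\f12}$. (The factor $2$ is absorbed consistently on both sides since $\|f_e - s_n\|^2_{L^2(w_{-\f12})}$ is the unweighted-by-2 one-variable norm; one just has to track it carefully, or equivalently normalize the circle inner product to match.)

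The only real subtlety — not an obstacle so much as a bookkeeping point — is keeping the factors of $2$ and the weight-shifts $w \mapsto w_{\pm 1/2}$ straight when passing between the two-branch integral on $\Omega$ and the single integral on $[-1,1]$, and making sure the decomposition $f = f_e + y f_o$ is used consistently (in particular that $f_o$ is defined with the $\sqrt{1-x^2}$ in the denominator so that $y f_o$ is the honest odd part). Since everything reduces to the identity $f = f_e + y f_o$ on $\Omega$ together with the orthogonality already established in \thmref{thm:circlebasis}, no genuinely new estimate is needed; the proof is essentially a matching of coefficients followed by Parseval.
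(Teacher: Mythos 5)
Your proposal is correct and follows essentially the same route as the paper's proof: matching $\wh f_{n,1}$, $\wh f_{n,2}$ with the one-variable coefficients of $f_e$ and $f_o$ via the branch-splitting of the inner product, using the decomposition $f = f_e + y f_o$ on $\Omega$, and invoking the even/odd orthogonality in $y$ to kill the cross term in the $L^2$ identity. The only cosmetic difference is that the paper carries out the computation in the $\theta$-parametrization before substituting $t=\cos\theta$, whereas you work directly with the two-branch form of \eqref{eq:ipd-circle}; the factor-of-$2$ bookkeeping you flag is exactly the point where the two normalizations must be reconciled.
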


\begin{proof}
By the expression of $Y_{n,1}$ in \eqref{eq:OPcircle}, 
\begin{align*}
  \la f, Y_{n,1} \ra &\, = \int_{-\pi}^\pi f(\cos \t,\sin \t) p_n(w_{-\f12}; \cos \t) w(\cos \t) d\t  \\
       & \,= \int_{0}^\pi \left[f(\cos \t,\sin \t) + f(\cos \t, -\sin \t)\right] p_n(w_{-\f12}; \cos \t) w(\cos \t) d\t \\
       & \,=   2 \int_{-1}^1 f_e (t) p_n(w_{-\f12}; t) w_{-\f12}(t) dt  =  2 \la f_e, p_n(w_{-\f12}) \ra_{L^2(w_{-\f12})}.
\end{align*}
Similarly, by the expression of $Y_{n,2}$ in \eqref{eq:OPcircle}, 
\begin{align*}
  \la f, Y_{n,2} \ra &\, =  \int_0^\pi \left[f(\cos \t,\sin \t) - f(\cos \t, -\sin \t)\right] \sin \t p_{n-1} (w_{\f12}; \cos \t) w(\cos \t) d\t \\
      & \,=   \int_0^\pi \frac{f(\cos \t,\sin \t) - f(\cos \t, -\sin \t)}{\sin \t} \sin^2 \t p_{n-1}(w_{\f12}; \cos \t) w(\cos \t) d\t \\
      & \,=   2 \int_{-1}^1 f_o (t) p_{n-1}(w_{\f12}; t) w_{\f12}(t) dt  =  2 \la f_o, p_{n-1}(w_{\f12}) \ra_{L^2(w_{\f12})}.
\end{align*}
Consequently, by \eqref{eq:norm-circle}, we obtain
\begin{align*}
  \wh f_{n,1} =  \frac{ \la f_e, p_n(w_{-\f12}) \ra_{L^2(w_{-\f12})}}{ h_n(w_{-\f12})} =  \wh {f_e}_n (w_{-\f12}), \,\,
  \wh f_{n,2} =  \frac{ \la f_o, p_{n-1}(w_{\f12}) \ra_{L^2(w_{\f12})}}{ h_{n-1}(w_{\f12})} =  \wh {f_o}_{n-1} (w_{\f12}) 
\end{align*}
in the notation of \eqref{eq:Fourier-series}. Hence, \eqref{eq:Snf-circle} follows readily from the definition 
of the partial sum operators in \eqref{eq:Snf}. 

With $y = \sqrt{1-x^2}$, we see that $f(x,y) = f_e(x) + y f_o(x)$. It follows that 
\begin{align*}
  f(x, y) - S_n f(x,y) = f_e (x) -  s_n(w_{-\f12}; f_e, x) 
    + y \left(f_o(x) - s_{n-1} (w_{\f12}; f_o, x)\right).
\end{align*}
Consequently, the convergence of $S_n f$ follows form that of $ s_n(w_{-\f12};f_e)$ and $s_{n-1}(w_{\f12}; f_o)$. 
Finally, let $F = f_e -s_n(w_{-\f12}; f_e)$ and $G =f_o- s_{n-1} (w_{\f12}; f_o)$. Then the last displayed identity 
shows that
\begin{align*}
  \|f-S_n f\|_{L^2(\varpi,\Omega)}^2  = & \, \int_{-\pi}^\pi |F(\cos \t) + \sin \t G(\cos \t)|^2 w(\cos \t) d\t \\
     =  & \,  \int_{-\pi}^\pi \left( |F(\cos \t)|^2+ |\sin \t G(\cos \t)|^2 \right) w(\cos \t) d\t \\
     =  & \int_{-1}^1 |F(t)|^2 w_{-\f12}(t) dt + \int_{-1}^1 |G(t)|^2 w_{\f12}(t) dt,
\end{align*}
where we have used the fact that the integral of $\sin \t F(\cos \t) G(\cos \theta)$ is zero in the second step. 
This gives \eqref{eq:SnfL2-circle} and completes the proof. 
\end{proof}

This theorem is also known but probably not stated in this precise form. When $f$ is chosen as 
$f(x,y) = g(\cos \t)$, then the Fourier orthogonal series of $f$ becomes the Fourier orthogonal 
series of $g$ in $L^2(w; [-1,1])$. For the classical Fourier series, it is the familiar fact that the Fourier 
series of $f(\cos\t)$ is the Fourier cosine series of $f(t)$. We choose to give a complete proof here since 
it is the harbinger of what will come for other quadratic curves. 

\section{Orthogonal polynomials on a parabola}
\setcounter{equation}{0}

Under an affine transform, we only need to consider the parabola defined by 
$$
    y = x^2 \qquad \hbox{or} \qquad   \Omega = \{(x, y): y = x^2, \, x \in \RR\},
$$ 
and consider even weights $\varpi(x,y) = \varpi(-x,y)$. Since $y = x^2$, we can write $\varpi(x,y) = w(x^2)$ for
a weight function $w$ on $\RR_+ = [0,+\infty)$, so that the bilinear form becomes
\begin{equation}\label{eq:ipd-parabola}
 \la f, g \ra = \int_\RR f(x,x^2) g(x,x^2) w(x^2) dx, 
\end{equation}
which defines an inner product on the space $\RR[x,y] /\la y - x^2 \ra$.

\subsection{Orthogonal polynomials} 
Let $\CH_n(\varpi)$ denote the space of orthogonal polynomials of degree $n$ with respect to the inner product
\eqref{eq:ipd-parabola}. 

\begin{thm}\label{thm:parabolabasis}
Let $w$ be a weight function defined on $\RR_+$. Define $w_{-\f12}(t) := t^{-\f12} w(t)$ and $w_\f12(t) := t^{\f12} w(t)$. 
Then the polynomials 
\begin{equation}\label{eq:OP-parabola}
     Y_{n,1} (x,y) = p_n (w_{-\f12}; y) \quad \hbox{and}\quad Y_{n,2} (x,y) = x p_{n-1}(w_\f12; y)
\end{equation}
form an orthogonal basis of $\CH_n(\varpi)$ for $n \ge 1$. Furthermore,
\begin{equation} \label{eq:norm-parabola}
  \la Y_{n,1}, Y_{n,1} \ra = h_n(w_{-\f12}) \quad \hbox{and} \quad  \la Y_{n,2}, Y_{n,2} \ra = h_{n-1} (w_{\f12}).
\end{equation}
\end{thm}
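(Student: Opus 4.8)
The plan is to follow the pattern of \thmref{thm:circlebasis}, reducing the bilinear form \eqref{eq:ipd-parabola} to a pair of one-variable inner products by exploiting that the substitution $y\mapsto x^2$ identifies $\RR[x,y]/\la y-x^2\ra$ with $\RR[x]$, and that the even weight $w(x^2)$ respects the splitting of a univariate polynomial into its even and odd parts. First I would record the algebraic reduction: reducing modulo $\la y-x^2\ra$ sends a monomial $x^a y^b$ to $x^{a+2b}$, so the image of $\Pi_n^2$ is exactly the space $\Pi^1_{2n}$ of univariate polynomials of degree $\le 2n$ (every exponent $k$ with $0\le k\le 2n$ is attained: by $x^k$ if $k\le n$, and by $x^{2n-k}y^{k-n}$, of total degree $n$, if $n<k\le 2n$). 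Under this identification the form \eqref{eq:ipd-parabola} becomes $\la q,r\ra=\int_\RR q(x)r(x)w(x^2)\,dx$ on $\RR[x]$, and since $w(x^2)$ is even, writing $q(x)=Q_e(x^2)+xQ_o(x^2)$ and $r(x)=R_e(x^2)+xR_o(x^2)$ kills the cross terms, leaving
\[
 \la q,r\ra = \int_\RR Q_e(x^2)R_e(x^2)w(x^2)\,dx + \int_\RR x^2 Q_o(x^2)R_o(x^2)w(x^2)\,dx.
\]
The substitution $u=x^2$, under which $dx=du/(2\sqrt u)$ and each integral over $\RR$ becomes twice the integral over $\RR_+$, turns the two terms into $\la Q_e,R_e\ra_{L^2(w_{-\f12})}$ and $\la Q_o,R_o\ra_{L^2(w_{\f12})}$ respectively, with no leftover factor of $2$ because the Jacobian $1/(2\sqrt u)$ absorbs it; this is precisely where the weights $w_{-\f12}$ and $w_{\f12}$ of the statement arise.

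Next I would characterize $\CH_n(\varpi)$. By the previous step it is identified with the orthogonal complement of $\Pi^1_{2n-2}$ inside $\Pi^1_{2n}$ with respect to $\la\cdot,\cdot\ra$, a $2$-dimensional space, consistent with $\dim\CH_n(\varpi)=2$. A polynomial $q\in\Pi^1_{2n}$ with parts $Q_e\in\Pi^1_n$, $Q_o\in\Pi^1_{n-1}$ is orthogonal to every even element of $\Pi^1_{2n-2}$ if and only if $Q_e\perp\Pi^1_{n-1}$ in $L^2(w_{-\f12})$, and orthogonal to every odd element of $\Pi^1_{2n-2}$ if and only if $Q_o\perp\Pi^1_{n-2}$ in $L^2(w_{\f12})$; hence $Q_e$ is a multiple of $p_n(w_{-\f12})$ and $Q_o$ a multiple of $p_{n-1}(w_{\f12})$, which, translated back through $y=x^2$, is exactly the span of $Y_{n,1}$ and $Y_{n,2}$ in \eqref{eq:OP-parabola}. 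These two are linearly independent (one even, one odd in $x$, with leading terms $x^{2n}$ and $x^{2n-1}$) and each has total degree $n$ in $(x,y)$, so they form an orthogonal basis of $\CH_n(\varpi)$: the relation $\la Y_{n,1},Y_{n,2}\ra=0$ is immediate because the integrand is odd in $x$ against the even weight $w(x^2)$, and the norm identities \eqref{eq:norm-parabola} follow by applying the $u=x^2$ substitution to $\la Y_{n,1},Y_{n,1}\ra$ and $\la Y_{n,2},Y_{n,2}\ra$ exactly as above.

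The calculations are routine; the only point that needs care is bookkeeping the degree filtration through the even/odd splitting, namely that a polynomial of degree $\le 2n$ in $x$ decomposes as $Q_e(x^2)+xQ_o(x^2)$ with $\deg Q_e\le n$ and $\deg Q_o\le n-1$, while the test space $\Pi^1_{2n-2}$ correspondingly contributes $\deg R_e\le n-1$ and $\deg R_o\le n-2$ — a miscount here would change which one-variable orthogonal polynomial appears, so it must be checked against the explicit form of $\gamma(x,y)=y-x^2$. Everything else parallels the proof of \thmref{thm:circlebasis} essentially verbatim, with $\RR$ in place of the circle and the Jacobian factor $1/(2\sqrt u)$ in place of $1/\sqrt{1-t^2}$.
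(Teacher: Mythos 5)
Your proposal is correct and takes essentially the same route as the paper: the orthogonality relations come from the parity of the integrand in $x$ together with the substitution $u=x^2$, which is exactly where the weights $w_{-\f12}$ and $w_{\f12}$ arise in the paper's computation, and the norm identities follow the same way. The only real difference is presentational: you characterize $\CH_n(\varpi)$ as the orthogonal complement of $\Pi^1_{2n-2}$ in $\Pi^1_{2n}$ under the identification $\RR[x,y]/\la y-x^2\ra\cong\RR[x]$ and thereby re-derive the spanning and dimension statements, whereas the paper verifies the stated polynomials directly and appeals to $\dim\CH_n=2$ from its earlier proposition.
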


\begin{proof}
For $n \ge 0$ and $m \ge 0$, 
\begin{align*}
 \la Y_{n,1},  Y_{m,2} \ra = \, & \int_\RR Y_{n,1} (x,x^2) Y_{m,2}(x,x^2)  w(x^2) dx  \\
      = \, &  \int_\RR x p_n (w_{-\f12}, x^2) p_{n-1}(w_\f12; x^2) w(x^2) dx  =0
\end{align*}
since the integrand is odd. For $n, m \in \NN_0$, 
\begin{align*}
\la Y_{n,1},  Y_{m,1} \ra = \, &  \int_\RR p_n (w_{-\f12}; x^2) p_{m}(w_{-\f12}; x^2) w(x^2) dx   \\
   = \, & 2 \int_0^\infty p_n (w_{-\f12}; x^2) p_{m}(w_{-\f12}; x^2) w(x^2) dx   \\
   =  \, & \int_0^\infty p_n (w_{-\f12}; t) p_{m}(w_{-\f12}; t) t^{-\f12} w(t) dt = h_n(w_{-\f12}) \delta_{n,m}.
\end{align*}
Similarly, for $n, m \in \NN$, 
\begin{align*}
\la Y_{n,2},  Y_{m,2} \ra = \, &  \int_\RR x^2 p_{n-1} (w_{\f12}; x^2) p_{m-1}(w_{\f12}; x^2) w(x^2) dx   \\
    =  \, & \int_0^\infty p_{n-1} (w_{\f12}; t) p_{m-1}(w_{\f12}; t) t^{\f12} w(t) dt =h_{n-1}(w_{\f12}) \delta_{n,m}.
\end{align*}
Since it is evident that $Y_{n,1}$ and $Y_{n,2}$ are polynomials of degree $n$ in two variables, we see that
they form an orthogonal basis for $\CH_n(\varpi)$. 
\end{proof}

We consider two examples of classical orthogonal polynomials. Our first example is on the entire parabola 
$\{(x,y): y=x^2, x \in \RR\}$ with the Hermite weight $w(t) = e^{-t^2}$. 
Let $L_n^\a$ denote the Laguerre polynomials, which are orthogonal with respect to $x^\a e^{-x}$ on $\RR_+$. 

\begin{prop}
For the inner product with respect to the Hermite weight
$$
  \la f, g \ra = \int_{\RR} f(x,x^2) g(x,x^2) e^{-x^2} dx  
$$
and $n \ge 1$, an orthogonal basis of $\CH_n$ is given by the polynomials 
$$
  Y_{n,1} (x,y) = L_n^{-\f12}(y) \quad \hbox{and} \quad Y_{n,2}(x,y) = x L_{n-1}^{\f12}(y).
$$
Furthermore, they are the eigenfunctions of a second order differential operator; more precisely, 
$Y_{n,1}$ and $Y_{n,2}$ satisfy the equation
\begin{equation} \label{eq:PDE-Hermite}
  \CL u : =  x \partial_{x y} u + y \partial_{yy}u+ (\tfrac12 - y) \partial_y u - x \partial_x u = -n u. 
\end{equation}
\end{prop}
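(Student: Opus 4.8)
The plan is to read off the basis from \thmref{thm:parabolabasis} and then to verify the differential equation \eqref{eq:PDE-Hermite} by direct substitution, using the classical Laguerre ODE.

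For the first claim, observe that here $\varpi(x,y)=w(x^2)$ with $w(t)=e^{-t}$ on $\RR_+$, so the two auxiliary weights supplied by \thmref{thm:parabolabasis} are $w_{-\f12}(t)=t^{-\f12}e^{-t}$ and $w_{\f12}(t)=t^{\f12}e^{-t}$. These are precisely the orthogonality weights of the Laguerre polynomials $L_n^{-\f12}$ and $L_n^{\f12}$ (both parameters exceed $-1$), so $p_n(w_{-\f12})$ is a nonzero scalar multiple of $L_n^{-\f12}$ and $p_{n-1}(w_{\f12})$ a nonzero scalar multiple of $L_{n-1}^{\f12}$. Substituting these into \eqref{eq:OP-parabola} yields the stated polynomials up to normalization, and their orthogonality and the fact that they form a basis of $\CH_n$ are inherited from \thmref{thm:parabolabasis}.

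For the differential equation I would invoke the Laguerre ODE
$$
 t\,(L_m^\a)''(t) + (\a+1-t)\,(L_m^\a)'(t) + m\,L_m^\a(t) = 0
$$
and check the two basis elements separately. Since $Y_{n,1}(x,y)=L_n^{-\f12}(y)$ does not depend on $x$, the terms $x\,\partial_{xy}u$ and $-x\,\partial_x u$ in $\CL$ drop out, and $\CL Y_{n,1}=y\,(L_n^{-\f12})''(y)+(\tfrac12-y)\,(L_n^{-\f12})'(y)$, which equals $-n\,L_n^{-\f12}(y)$ by the ODE with $\a=-\f12$ and $m=n$. For $Y_{n,2}(x,y)=x\,g(y)$ with $g:=L_{n-1}^{\f12}$, the partial derivatives are elementary, namely $\partial_x Y_{n,2}=g$, $\partial_{xy}Y_{n,2}=g'$, $\partial_y Y_{n,2}=x\,g'$, $\partial_{yy}Y_{n,2}=x\,g''$, and collecting terms gives $\CL Y_{n,2}=x\bigl[\,y\,g''(y)+(\tfrac32-y)\,g'(y)-g(y)\,\bigr]$. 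Applying the ODE with $\a=\f12$ and $m=n-1$ converts the first two summands in the bracket into $-(n-1)g(y)$, so the bracket equals $-n\,g(y)$ and $\CL Y_{n,2}=-n\,Y_{n,2}$.

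I do not expect any serious obstacle here; the only points requiring care are matching the Laguerre parameter to the weight shift ($\a=-\f12$ for $Y_{n,1}$ and $\a=\f12$ for $Y_{n,2}$) and observing that, for $Y_{n,2}$, the index $n-1$ together with the extra $-g(y)$ produced by the $-x\,\partial_x$ term combine to lift the eigenvalue from $-(n-1)$ to $-n$, so that both basis elements of $\CH_n$ share the eigenvalue $-n$.
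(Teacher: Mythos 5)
Your proof is correct and follows essentially the same route as the paper: the basis is read off from Theorem~\ref{thm:parabolabasis} by identifying $w_{-\f12}$ and $w_{\f12}$ as the Laguerre weights with parameters $\mp\f12$, and the eigenvalue equation is checked by direct substitution into the Laguerre ODE, with the $-x\,\partial_x$ term shifting the eigenvalue from $-(n-1)$ to $-n$ for $Y_{n,2}$ exactly as in the paper's computation. (You even state the Laguerre ODE with the correct factor of $t$ on the second-derivative term, which the paper's displayed version omits by typo though its calculation uses the right form.)
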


\begin{proof}
We only need to verify the differential equations. The Laguerre polynomial $v(y)= L_n^\alpha(y)$ satisfies
the equation
$$
       v '' + (\a +1 - y) v' = -n v.
$$
For $u = Y_{n,1}$, we have $\partial_x u =0$ and it is easy to see that \eqref{eq:PDE-Hermite} becomes the 
differential equation for $v(y) = L_n^{-\f12}(y)$. For $u = Y_{n,2}$, we write $u(x,y) = x v(y)$ with 
$v(y)  =  L_{n-1}^{\f12}(y)$. Then
\begin{align*}
  \CL u = x v' + x y v'' + (\tfrac12 - y) x v' - x v & = x (y v'' +  (\tfrac32 - y) v') - x v \\
    &   = x  ( -(n-1) v -v )  = -n x v = -n u.
\end{align*}
This completes the proof. 
\end{proof}

Our second example is over the parabola with the Gegenbauer weight $w(t) = (1-t^2)^{\l-\f12} \chi_{[-1,1]}(t)$, 
where $\chi_E$ denotes the characteristic function of the set $E$. The support set of $w$ is finite, so that 
our orthogonal polynomials are over a finite parabola $\{(x,y): y=x^2, x \in [-1,1]\}$. 
Let $P_n^{(\a,\b)}$ denote the Jacobi polynomials, which are orthogonal with respect to the weight function
$(1-x)^\a (1+x)^\b$ on the interval $[-1,1]$.

\begin{prop}
For the inner product with respect to the Gegenbauer weight
$$
  \la f, g \ra = \int_{-1}^1 f(x,x^2) g(x,x^2) (1-x^2)^{\l -\f12}dx  
$$
and $n \ge 1$, an orthogonal basis of $\CH_n$ is given by the polynomials 
$$
  Y_{n,1} (x,y) = P_n^{(\l-\f12, -\f12)}(2y-1) \quad \hbox{and} \quad Y_{n,2}(x,y) = x P_{n-1}^{(\l-\f12, \f12)}(2y-1).
$$
\end{prop}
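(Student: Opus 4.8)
The plan is to read this off \thmref{thm:parabolabasis} once the two auxiliary one-variable weights have been recognized as Jacobi weights. Matching the inner product in the statement with the general form \eqref{eq:ipd-parabola}, the weight $w$ of \thmref{thm:parabolabasis} is here $w(t)=(1-t)^{\l-\f12}$ on $[0,1]$ (zero otherwise), so
$$
  w_{-\f12}(t)=t^{-\f12}(1-t)^{\l-\f12},\qquad w_{\f12}(t)=t^{\f12}(1-t)^{\l-\f12},\qquad t\in[0,1].
$$
By \thmref{thm:parabolabasis} the polynomials $Y_{n,1}(x,y)=p_n(w_{-\f12};y)$ and $Y_{n,2}(x,y)=x\,p_{n-1}(w_{\f12};y)$ already form an orthogonal basis of $\CH_n(\varpi)$ for every $n\ge1$, so the only thing to do is to express $p_n(w_{-\f12})$ and $p_{n-1}(w_{\f12})$ in terms of Jacobi polynomials.

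For this I would use the affine substitution $t=\tfrac{1+s}{2}$, which maps $[0,1]$ onto $[-1,1]$ and sends a degree-$n$ polynomial in $t$ to a degree-$n$ polynomial in $s$. A direct computation shows that $w_{-\f12}(t)\,dt$ becomes a positive constant times $(1-s)^{\l-\f12}(1+s)^{-\f12}\,ds$ and $w_{\f12}(t)\,dt$ becomes a positive constant times $(1-s)^{\l-\f12}(1+s)^{\f12}\,ds$, i.e.\ the Jacobi weights with parameters $(\l-\f12,-\f12)$ and $(\l-\f12,\f12)$ respectively. Since an orthogonal polynomial of a given degree with respect to a fixed weight is unique up to a nonzero scalar, $p_n(w_{-\f12};t)$ must be a constant multiple of $P_n^{(\l-\f12,-\f12)}(2t-1)$ and $p_{n-1}(w_{\f12};t)$ a constant multiple of $P_{n-1}^{(\l-\f12,\f12)}(2t-1)$. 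Replacing $t$ by $y$ and absorbing the scalars (legitimate, since an orthogonal basis is only defined up to scalar multiples) gives exactly the asserted $Y_{n,1}$ and $Y_{n,2}$.

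There is no genuine obstacle here beyond careful bookkeeping. The single delicate point is the weight-and-Jacobian transformation under $t=\tfrac{1+s}{2}$: one has to track the half-integer exponents and notice that the factor $t^{\mp\f12}$ built into $w_{\mp\f12}$ is precisely what produces the left-endpoint factor $(1+s)^{\mp\f12}$ after the change of variable, the Gegenbauer exponent $\l-\f12$ staying with the right endpoint in both cases. If one also wants the normalizations, they follow by pushing the resulting constants through \eqref{eq:norm-parabola}, but this is not needed for the statement as given.
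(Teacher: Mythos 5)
Your proof is correct and is essentially the paper's own argument: the paper likewise reduces the statement to Theorem~\ref{thm:parabolabasis} by observing that $P_n^{(\l-\f12,-\f12)}(2t-1)$ and $P_n^{(\l-\f12,\f12)}(2t-1)$ are orthogonal with respect to $t^{-\f12}(1-t)^{\l-\f12}$ and $t^{\f12}(1-t)^{\l-\f12}$ on $[0,1]$. The only difference is that you spell out the affine change of variable identifying these as Jacobi weights, which the paper takes as known.
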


\begin{proof}
Since the polynomial $P_n^{(\l-\f12, -\f12)}(2t-1)$ is orthogonal with respect to $t^{-\f12}(1-t)^{\l-\f12}$ on $[0,1]$ 
and the polynomial $P_n^{(\l-\f12, \f12)}(2t-1)$ is orthogonal with respect to $t^{\f12}(1-t)^{\l-\f12}$ on $[0,1]$, we
see that $Y_{n,1}$ and $Y_{n,2}$ form a basis of $\CH_n$. 
\end{proof}

It is known that the Jacobi polynomial $v(t) = P_n^{(\a,\b)}(2t-1)$ satisfies the differential equation
$$
       t(1-t) v'' + ( \b+1 - (\a+\b+2) t) v' = -n (n +\a+\b+1) v.
$$
However, since the eigenvalues are quadratic in $n$, we cannot combine terms as in the proof of 
\eqref{eq:PDE-Hermite} when working with $Y_{n,2}$ and, as a consequence,  
we are not able to deduce a second order differential operator that
has $Y_{n,1}$ and $Y_{n,2}$ as eigenfunctions with the same eigenvalue. 

\subsection{Fourier orthogonal series}

Let $L^2(\varpi, \Omega)$ be the $L^2$ space on $\Omega$ with the norm $\|\cdot\|_{L^2(\varpi,\Omega)}$ defined by
$$
  \|f\|_{L^2(\varpi, \Omega)}^2: = \la f,f\ra =  \int_\RR |f(x,x^2)|^2 w(x^2)dx.   
$$
For $f \in L^2(\varpi, \Omega)$, we consider the Fourier orthogonal series with respect to the basis 
\eqref{eq:OP-parabola} as defined in \eqref{eq:Fourier}.

\begin{thm} \label{thm:Snf-parabola}  
Let $f$ be a function defined on $\Omega$. Define 
$$
 f_e(y) := \frac{f(\sqrt{y},y) + f(-\sqrt{y},y)}{2} \quad \hbox{and} \quad f_o(y) := \frac{f(\sqrt{y},y)-f(-\sqrt{y},y)}{2 \sqrt{y}}. 
$$
Then
\begin{align} \label{eq:Snf-parabola}
  S_n f(x,y) \,= s_n(w_{-\f12}; f_e, y) + x s_{n-1} (w_{\f12}; f_o, y). 
 \end{align}
In particular, if $s_n(w_{-\f12}; f_e) \to f_e$ and $s_{n-1} (w_{\f12}; f_o) \to f_o$ as $n\to \infty$, 
then $S_n f(x,y) \to f(x,y)$ as $n \to \infty$. Furthermore, if $f\in L^2(\varpi, \Omega)$, then 
\begin{align*}
\|f - S_n(f) \|_{L^2(\varpi,\Omega)}^2 =  \|s_{n} (w_{-\f12} ;f_e) - f_e\|_{L^2(w_{-\f12} )}^2   + \|s_{n-1} (w_{\f12};f_o) - f_o\|_{L^2(w_{\f12} )}^2. 
\end{align*}
\end{thm}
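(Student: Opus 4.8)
The plan is to mirror the proof of \thmref{thm:Snf-circle} almost verbatim, exploiting the fact that the parabola $y=x^2$ is parametrized by $x\in\RR$ with the same even/odd splitting as the circle. First I would compute $\la f, Y_{n,1}\ra$ and $\la f, Y_{n,2}\ra$ directly from \eqref{eq:OP-parabola} and the definition \eqref{eq:ipd-parabola}. Writing the integral over $\RR$ as the sum of integrals over $(-\infty,0]$ and $[0,\infty)$ and substituting $t=x^2$, the contribution of $Y_{n,1}(x,x^2)=p_n(w_{-\f12};x^2)$ symmetrizes $f$ in the $x$-variable and yields $\la f, Y_{n,1}\ra = \la f_e, p_n(w_{-\f12})\ra_{L^2(w_{-\f12})}$ (the factor of $2$ from the two half-lines is cancelled against the Jacobian $dx = \tfrac12 t^{-\f12}dt$, which is why the normalization in \eqref{eq:norm-parabola} has no $2$, unlike \eqref{eq:norm-circle}). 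Similarly $Y_{n,2}(x,x^2)=x\,p_{n-1}(w_{\f12};x^2)$ antisymmetrizes $f$, producing a factor $f(\sqrt t,t)-f(-\sqrt t,t)$; pulling out one power of $\sqrt t$ to form $f_o$ and absorbing the remaining $\sqrt t$ into $w_{-\f12}(t)\cdot t = w_{\f12}(t)$ gives $\la f, Y_{n,2}\ra = \la f_o, p_{n-1}(w_{\f12})\ra_{L^2(w_{\f12})}$.

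Next I would combine these with the norm formulas \eqref{eq:norm-parabola} to identify the Fourier coefficients: $\wh f_{n,1} = \wh{(f_e)}_n(w_{-\f12})$ and $\wh f_{n,2} = \wh{(f_o)}_{n-1}(w_{\f12})$. Substituting into the definition \eqref{eq:Snf} of $S_n f$ and using \eqref{eq:OP-parabola} again immediately gives \eqref{eq:Snf-parabola}. For the pointwise convergence statement, I would note that for a point on the parabola we have $y=x^2$ with $x$ of either sign, and the algebraic identity $f(x,x^2)=f_e(x^2)+x\,f_o(x^2)$ holds (valid for both signs of $x$ since $f_o$ is multiplied by $x$), so that $f(x,y)-S_nf(x,y) = \bigl(f_e(y)-s_n(w_{-\f12};f_e,y)\bigr) + x\bigl(f_o(y)-s_{n-1}(w_{\f12};f_o,y)\bigr)$; convergence of each univariate partial sum then forces $S_nf\to f$.

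For the $L^2$ identity, set $F := f_e - s_n(w_{-\f12};f_e)$ and $G := f_o - s_{n-1}(w_{\f12};f_o)$, so that $f(x,x^2)-S_nf(x,x^2) = F(x^2)+x\,G(x^2)$. Then $\|f-S_nf\|_{L^2(\varpi,\Omega)}^2 = \int_\RR |F(x^2)+xG(x^2)|^2 w(x^2)\,dx$; expanding the square, the cross term $\int_\RR x F(x^2)G(x^2)w(x^2)\,dx$ vanishes because its integrand is odd in $x$, leaving $\int_\RR |F(x^2)|^2 w(x^2)\,dx + \int_\RR x^2|G(x^2)|^2 w(x^2)\,dx$; the substitution $t=x^2$ on each half-line turns these into $\int_0^\infty |F(t)|^2 t^{-\f12}w(t)\,dt + \int_0^\infty |G(t)|^2 t^{\f12}w(t)\,dt = \|F\|_{L^2(w_{-\f12})}^2 + \|G\|_{L^2(w_{\f12})}^2$, which is the claimed identity.

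There is no real obstacle here; the only points requiring a little care are bookkeeping ones: getting the powers of $t$ right when moving the Jacobian $dx=\tfrac12 t^{-1/2}dt$ past the weight (this is what shifts $w\mapsto w_{-\f12}$ and explains the absence of the factor $2$ relative to the circle case), and making sure $f_o$ is well-defined on all of $\RR_+$ (the division by $2\sqrt y$ is harmless since $f(\sqrt y,y)-f(-\sqrt y,y)$ vanishes at $y=0$ whenever $f$ is, say, continuous, and more to the point $x\,f_o(x^2)$ reconstructs the odd part regardless). One should also remark, as the paper does for the circle, that specializing to $f$ depending only on $y$ recovers the ordinary Fourier orthogonal expansion of a function on $\RR_+$ with respect to $w$, now split across the weights $w_{\pm\f12}$; I would include a one-sentence observation to that effect after the proof.
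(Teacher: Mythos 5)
Your proposal is correct and follows essentially the same route as the paper's own (deliberately brief) proof: the same symmetrization of the integral over $\RR$, the same substitution $t=x^2$ shifting $w$ to $w_{\pm\f12}$ and absorbing the factor of $2$, and the same parity argument killing the cross term in the $L^2$ identity. The bookkeeping of the Jacobian and the identification $\wh f_{n,1}=\wh{(f_e)}_n(w_{-\f12})$, $\wh f_{n,2}=\wh{(f_o)}_{n-1}(w_{\f12})$ are all as in the paper.
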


\begin{proof}
The proof is similar to that of Theorem \ref{thm:Snf-circle}. We shall be brief. 
By the expression of $Y_{n,1}$ in \eqref{eq:OP-parabola}, 
\begin{align*}
  \la f, Y_{n,1} \ra &\, = \int_\RR f(x,x^2) p_n(w_{-\f12}; x^2) w(x^2) dx  \\
       & \,= \int_0^\infty \left[f(x,x^2)+ f(-x,x^2) \right] p_n(w_{-\f12}; x^2) w(x^2) dx \\
       & \,=   \int_0^\infty f_e (y) p_n(w_{-\f12}; y) y^{-\f12} w(y) dy 
         =  \la f_e, p_n(w_{-\f12})\ra_{L^2(w_{-\f12})}.
\end{align*}
Similarly, by the expression of $Y_{n,2}$ in \eqref{eq:OP-parabola}, we can derive 
\begin{align*}
  \la f, Y_{n,2} \ra 
         =  \la f_o, p_{n-1}(w_{\f12})\ra_{L^2(w_{\f12})}.
\end{align*}
Consequently, by \eqref{eq:norm-parabola}, we obtain
$$
  \wh f_{n,1} =  \wh {(f_e)}_n (w_{-\f12}) \quad\hbox{and}\quad \wh f_{n,2}  =   \wh {(f_o)}_{n-1} (w_{\f12})
$$
in the notation of \eqref{eq:Fourier-series}, from which \eqref{eq:Snf-parabola} follows. 

The rest of the proof follows from $f(x,x^2) = f_e(x^2) + x f_o(x^2)$, which shows that 
\begin{align*}
  f(x, x^2) - S_n f(x,x^2) = f_e (x^2) -  s_n(w_{-\f12}; f_e, x^2) 
    + x\left(f_o(x^2) - s_{n-1} (w_{\f12}; f_o, x^2)\right)
\end{align*}
and its two terms in the right-hand side have different parity, since the two functions in the brackets 
are obviously even functions.  
\end{proof}

\section{Orthogonal polynomials on a hyperbola curve}
\setcounter{equation}{0}

Under an affine transform, we only need to consider the hyperbola defined by 
$$
     x^2 - y^2 =1 \qquad \hbox{or} \qquad   \Omega = \{(x, y): x^2- y^2 =1\}.
$$ 
The resulted quadratic curves has two branches. We consider two cases. 

\subsection{$\Omega$ with two branches} 
We consider the bilinear form defined for both branches of the hyperbola $x^2 = y^2 +1$ and consider 
$$
\Omega = \left\{ \left(\sqrt{ y^2 +1}, y\right): y \in \RR \right\} \bigcup  \left\{- \left(\sqrt{ y^2 +1}, y\right): y \in \RR \right\}.
$$
For weights satisfying $\varpi(x,y) = \varpi(-x,y)$, even in $x$, we can use $x^2 = y^2 + 1$ and write 
$\varpi(x,y) = w(y)$ for some $w$ defined on $\RR$. We then consider the bilinear form
\begin{align}\label{eq:ipd-hyper}
  \la f, g\ra =   \int_{-\infty}^\infty  & \left[  f(\sqrt{y^2+1},y) g(\sqrt{y^2+1},y) \right. \\
    & \qquad  + \left.      f(- \sqrt{y^2+1},y) g(- \sqrt{y^2+1},y) \right] w(y) dy, \notag
\end{align}
which is an inner product on $\RR[x,y]/\la x^2 - y^2 -1\ra$. Again let $\CH_n(\varpi)$ be the space of orthogonal
polynomials of degree $n$ with respect to the inner product $\la \cdot, \cdot \ra$. 

\begin{thm}\label{thm:TwoBranch}
Let $w$ be a weight function on $\RR$ and let $w_1(t) := (1+t^2)w(t)$. Then the polynomials 
\begin{equation}\label{eq:OP-hyper}
     Y_{n,1} (x,y) = p_n (w; y) \quad \hbox{and}\quad \qquad Y_{n,2} (x,y) = x p_{n-1}(w_1; y)
\end{equation}
form an orthogonal basis of $\CH_n(\varpi)$ for $n \ge 1$. Furthermore,
\begin{equation}\label{eq:norm-hyper}
  \la Y_{n,1}, Y_{n,1} \ra = 2 h_n(w) \quad \hbox{and} \quad  \la Y_{n,2}, Y_{n,2} \ra = 2 h_{n-1}(w_1).
\end{equation}
\end{thm}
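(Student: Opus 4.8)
The plan is to mimic the proof of Theorem~\ref{thm:parabolabasis} almost verbatim, exploiting the evenness of the weight in $x$ and the fact that on each branch $x=\pm\sqrt{y^2+1}$. First I would observe that $Y_{n,1}$ and $Y_{n,2}$ are genuine polynomials of degree $n$ in $(x,y)$: $Y_{n,1}=p_n(w;y)$ is manifestly of degree $n$ in $y$ alone, and $Y_{n,2}=x\,p_{n-1}(w_1;y)$ has degree $1+(n-1)=n$. Since $\dim\CH_n=2$ for $n\ge1$ by the Proposition, it suffices to show these two polynomials are mutually orthogonal, orthogonal to all lower-degree polynomials, and have positive norm; equivalently, it is enough to compute the three inner products $\la Y_{n,1},Y_{m,1}\ra$, $\la Y_{n,2},Y_{m,2}\ra$, $\la Y_{n,1},Y_{m,2}\ra$ and check they reduce to one-dimensional orthogonality relations.

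The key computation runs as follows. In the bilinear form \eqref{eq:ipd-hyper}, the substitution $x=\sqrt{y^2+1}$ on the first branch and $x=-\sqrt{y^2+1}$ on the second branch is built in. For $\la Y_{n,1},Y_{m,1}\ra$, both summands in \eqref{eq:ipd-hyper} give the same contribution $\int_\RR p_n(w;y)p_m(w;y)w(y)\,dy$, since $Y_{n,1}$ does not involve $x$; hence $\la Y_{n,1},Y_{m,1}\ra = 2\int_\RR p_n(w;y)p_m(w;y)w(y)\,dy = 2h_n(w)\delta_{n,m}$, which is \eqref{eq:norm-hyper}. For $\la Y_{n,2},Y_{m,2}\ra$ the factor $x^2$ appears, and on both branches $x^2=y^2+1$; again the two summands agree, and
\[
  \la Y_{n,2},Y_{m,2}\ra
  = 2\int_\RR (y^2+1)\,p_{n-1}(w_1;y)p_{m-1}(w_1;y)\,w(y)\,dy
  = 2\int_\RR p_{n-1}(w_1;y)p_{m-1}(w_1;y)\,w_1(y)\,dy
  = 2h_{n-1}(w_1)\delta_{n,m},
\]
using $w_1(t)=(1+t^2)w(t)$. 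For the mixed term $\la Y_{n,1},Y_{m,2}\ra$, the integrand on the first branch is $\sqrt{y^2+1}\,p_n(w;y)p_{m-1}(w_1;y)w(y)$ while on the second branch it is $-\sqrt{y^2+1}\,p_n(w;y)p_{m-1}(w_1;y)w(y)$, so the two summands cancel and $\la Y_{n,1},Y_{m,2}\ra=0$ for all $n\ge0$, $m\ge1$.

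To finish, I would argue orthogonality to lower degrees: any $Q\in\Pi_{m}^2$ restricted to $\Omega$ can be written, using $x^2=y^2+1$, as $Q(x,y)=q_0(y)+x\,q_1(y)$ with $\deg q_0\le m$ and $\deg q_1\le m-1$ on the curve; then $\la Y_{n,1},Q\ra = 2\int_\RR p_n(w;y)q_0(y)w(y)\,dy=0$ whenever $m<n$ by orthogonality of $p_n(w)$, and $\la Y_{n,2},Q\ra = 2\int_\RR p_{n-1}(w_1;y)q_1(y)w_1(y)\,dy=0$ whenever $m-1<n-1$, i.e. $m<n$, by orthogonality of $p_{n-1}(w_1)$; the cross terms vanish by the parity cancellation just as above. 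Together with $\la Y_{n,i},Y_{n,i}\ra>0$ from \eqref{eq:norm-hyper}, this shows $\{Y_{n,1},Y_{n,2}\}$ is an orthogonal basis of $\CH_n(\varpi)$. There is no serious obstacle here; the only point requiring a little care is the reduction $Q(x,y)\equiv q_0(y)+xq_1(y)\pmod{x^2-y^2-1}$ and tracking that this degree bookkeeping is compatible with the grading, but this is exactly the same elementary step already used implicitly for the parabola, so I expect it to go through cleanly.
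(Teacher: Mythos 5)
Your proof is correct and follows essentially the same route as the paper: the same branch-cancellation argument for $\la Y_{n,1},Y_{m,2}\ra$, and the same reduction of $\la Y_{n,1},Y_{m,1}\ra$ and $\la Y_{n,2},Y_{m,2}\ra$ to the univariate orthogonality relations for $w$ and $w_1=(1+\{\cdot\}^2)w$. Your additional explicit reduction $Q\equiv q_0(y)+x\,q_1(y)\pmod{x^2-y^2-1}$ to verify orthogonality to lower degrees is a correct elaboration of a step the paper leaves implicit.
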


\begin{proof}
For $n \ge 0$ and $m \ge 1$, we have
$$
  \la Y_{n,1}, Y_{m,2} \ra =  \int_{-\infty}^\infty  p_n(w; y) p_{m-1}(w_1;y) 
      \left[ \sqrt{y^2+1}  -  \sqrt{y^2+1} \right]w(y) dy = 0.
$$
Furthermore, for $n, m \in \NN_0$,   
$$
  \la Y_{n,1}, Y_{m,1} \ra =  2 \int_{-\infty}^\infty  p_n(w; y) p_{m}(w;y) w(y) dy = 2 h_n(w) \delta_{n,m} 
$$
and, for $n, m \in \NN$,
$$
  \la Y_{n,2}, Y_{m,2} \ra =  2 \int_{-\infty}^\infty  p_{n-1}(w_1; y) p_{m-1}(w_1;y)(1+y^2)w(y) dy = 
     2 h_{n-1}(w_1) \delta_{n,m} 
$$
since $w_1 = (1+\{\cdot\}^2) w$. This completes the proof.
\end{proof}

If $w$ is an even function on $\RR$, the orthogonal polynomial $p_{n-1}(w_1; \cdot)$ can be constructed 
explicitly as shown in the following proposition. 

\begin{prop}
Let $w_1(t) := (1+t^2)w(t^2)$  
define $w_{-\f12}(t) = t^{- \f12} w(t)$ and $w_{\f12}(t) = t^{\f12} w(t)$. Then 
\begin{align*}
p_{2n}(w_1; t) & =  \frac{1}{1+t^2} \left(p_{n+1}(w_{-\f12}; t^2) p_n(w_{-\f12};-1)- p_{n+1}(w_{-\f12}; -1) p_n(w_{-\f12}; t^2)\right), \\
p_{2n+1}(w_1; t) & =  \frac{t}{1+t^2} \left(p_{n+1}(w_{\f12}; t^2) p_n(w_{\f12};-1)- p_{n+1}(w_{\f12}; -1) p_n(w_{\f12}; t^2) \right).
\end{align*}
\end{prop}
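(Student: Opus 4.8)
The plan is to derive both formulas from two standard ingredients: the correspondence between orthogonal polynomials for an even weight on $\RR$ and orthogonal polynomials for a pair of half-line weights, and the classical Christoffel (linear modification) formula expressing the orthogonal polynomials for $(s-c)\,d\mu(s)$ in terms of those for $d\mu$ when $c$ lies outside the support. Since $w_1(t)=(1+t^2)w(t^2)=V(t^2)$ with $V(s)=(1+s)w(s)$, the first ingredient reduces the computation of the $\RR$-orthogonal polynomials $p_k(w_1;\cdot)$ to the computation of the $\RR_+$-orthogonal polynomials for $V_{-\f12}(s):=s^{-\f12}V(s)=(1+s)w_{-\f12}(s)$ (even degrees) and $V_{\f12}(s):=s^{\f12}V(s)=(1+s)w_{\f12}(s)$ (odd degrees); the second ingredient, applied with $c=-1$, then expresses these in terms of $p_n(w_{-\f12};\cdot)$ and $p_n(w_{\f12};\cdot)$, after which one substitutes $s=t^2$.

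For the first step I would recall the substitution $s=t^2$ already carried out in the proof of Theorem~\ref{thm:parabolabasis}: for a weight $\mu$ on $\RR_+$,
\begin{gather*}
  \int_\RR P(t^2)Q(t^2)\,\mu(t^2)\,dt = \int_0^\infty P(s)Q(s)\,s^{-\f12}\mu(s)\,ds, \\
  \int_\RR tP(t^2)\cdot tQ(t^2)\,\mu(t^2)\,dt = \int_0^\infty P(s)Q(s)\,s^{\f12}\mu(s)\,ds,
\end{gather*}
so that, writing $\mu_{\pm\f12}(s)=s^{\pm\f12}\mu(s)$, the orthogonal polynomials for the even weight $t\mapsto\mu(t^2)$ on $\RR$ are $p_n(\mu_{-\f12};t^2)$ in even degree $2n$ and $t\,p_n(\mu_{\f12};t^2)$ in odd degree $2n+1$ (up to normalization). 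Applying this with $\mu=V$, so that $\mu_{-\f12}(s)=(1+s)w_{-\f12}(s)$ and $\mu_{\f12}(s)=(1+s)w_{\f12}(s)$, reduces the proposition to the identity
\[
  p_n\bigl((1+\cdot)w_{-\f12};s\bigr)=\frac{p_{n+1}(w_{-\f12};s)\,p_n(w_{-\f12};-1)-p_{n+1}(w_{-\f12};-1)\,p_n(w_{-\f12};s)}{s+1},
\]
together with the analogous identity with $w_{\f12}$ in place of $w_{-\f12}$, followed by the substitution $s=t^2$ (the extra factor $t$ in the odd case being supplied by the first step).

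For the second step I would verify directly that the right-hand side above, call it $R_n(s)$, is an orthogonal polynomial of degree $n$ for the weight $(1+s)w_{-\f12}(s)$ on $\RR_+$. The numerator vanishes at $s=-1$, hence is divisible by $s+1$, so $R_n$ is a polynomial; it has degree exactly $n$ because the coefficient of $s^{n+1}$ in the numerator is $-p_n(w_{-\f12};-1)$ times the leading coefficient of $p_{n+1}(w_{-\f12};\cdot)$, and $p_n(w_{-\f12};-1)\ne 0$ since all zeros of $p_n(w_{-\f12};\cdot)$ lie in $(0,\infty)$, the interior of the convex hull of the support of $w_{-\f12}$. Finally, for any polynomial $Q$ of degree at most $n-1$,
\begin{align*}
  \int_0^\infty R_n(s)Q(s)(1+s)w_{-\f12}(s)\,ds
  &= p_n(w_{-\f12};-1)\int_0^\infty p_{n+1}(w_{-\f12};s)Q(s)w_{-\f12}(s)\,ds \\
  &\quad -\, p_{n+1}(w_{-\f12};-1)\int_0^\infty p_n(w_{-\f12};s)Q(s)w_{-\f12}(s)\,ds = 0
\end{align*}
by orthogonality of $p_n(w_{-\f12};\cdot)$ and $p_{n+1}(w_{-\f12};\cdot)$. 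The same computation verbatim with $w_{\f12}$ handles the odd case, and assembling the pieces via the first step gives the two displayed formulas.

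The argument is essentially bookkeeping; the only points requiring genuine attention are that the numerators are divisible by $s+1$ (immediate from evaluation at $-1$) and, more importantly, that $R_n$ really has degree $n$ and not less — this is the step where one uses $p_n(w_{\pm\f12};-1)\ne 0$, i.e. that $-1$ lies outside the support of $w_{\pm\f12}$, which also guarantees that $(1+s)w_{\pm\f12}(s)$ is a bona fide positive weight on $\RR_+$. One should also keep in mind that $p_n(\cdot)$ denotes an orthogonal polynomial only up to a nonzero scalar, so the stated identities are to be read modulo normalization (alternatively one fixes the monic normalization on both sides and checks that the leading coefficients match, a routine computation).
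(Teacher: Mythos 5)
Your argument is correct and follows essentially the same route as the paper: reduce via $s=t^2$ to the half-line weights $(1+s)w_{\pm\f12}(s)$ and then apply the Christoffel formula with $c=-1$; the paper simply cites Szeg\H{o} for that last step where you prove it directly (and your direct verification is the standard one, modulo a harmless sign slip in the leading coefficient of the numerator, which is $+p_n(w_{-\f12};-1)$ times the leading coefficient of $p_{n+1}$).
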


\begin{proof}
Changing variables $s = t^2$, we obtain
$$
   \int_\RR f(t^2) g(t^2) w_1(t) dt = \int_0^\infty f(s) g(s) (1+s) s^{-\f12} w(s) ds, 
$$
from which it is easy to see that 
$$
  p_{2n}(w_1; t) = p_n ( (1+ \{\cdot\} ) w_{-\f12}; t^2) \quad \hbox{and}\quad  
  p_{2n+1}(w_1; t) = t p_n ( (1+\{\cdot\}) w_{\f12}; t^2). 
$$
The orthogonal polynomials for $(1+t)  w_{-\f12}$ and $(1+t) w_{\f12}$ can be expressed by those for $w_{-\f12}$ and 
$w_{\f12}$, respectively, by the method of Christoffel \cite[Theorem 2.5]{Sz}, which gives the stated formulas. 
\end{proof}

Let $L^2(\varpi, \Omega)$ be the $L^2$ space on $\Omega$ with the norm $\|\cdot\|_{L^2(\varpi,\Omega)}$ defined by
\begin{align*}
  \|f\|_{L^2(\varpi, \Omega)}^2: =\la f,f\ra 
      = \int_{-\infty}^\infty \left[  \left |f(\sqrt{y^2+1},y)\right |^2 + \left |f(-\sqrt{y^2+1},y)\right |^2 \right]  w(y)dy.   
\end{align*}
We consider the Fourier orthogonal expansions as defined in \eqref{eq:Fourier}. 
 
\begin{thm} \label{thm:Snf-hyper}
Let $f$ be a function defined on $\Omega$. Define 
$$
 f_e(y) := \frac{f(\sqrt{y^2+1},y) + f(-\sqrt{y^2+1},y)}{2}
 $$
 and
 $$
  f_o(y) := 
     \frac{f(\sqrt{y^2+1},y) - f(-\sqrt{y^2+1},y) }{2 \sqrt{y^2+1}}. 
$$
Then the partial sum operator for the Fourier series with respect to \eqref{eq:ipd-hyper} satisfies
\begin{align} \label{eq:Snf-hyper}
  S_n f(x,y) \,= s_n(w; f_e, y) + x s_{n-1} (w_1; f_o, y). 
 \end{align}
In particular, if $s_n(w;f_e,x) \to f_e(x)$ and $s_n(w_1;f_o,x) \to f_o(x)$ as $n\to \infty$, then 
$S_n f(x,y) \to f(x,y)$ as $n \to \infty$. Furthermore, if $f\in L^2(\Omega, w)$, then 
\begin{align}\label{eq:SnL2-hyper}
\|f - S_n(f) \|_{L^2(\varpi,\Omega)}^2 = \|s_{n-1} (w ;f_e) - f_e\|_{L^2(w)}^2 + \|s_{n-1} (w_1;f_o) - f_o \|_{L^2(w_1)}^2.  
\end{align}
\end{thm}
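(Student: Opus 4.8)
The plan is to follow the template of the proof of Theorem~\ref{thm:Snf-circle} essentially verbatim, since the hyperbola case has the same structure: an even-in-$x$ weight, a two-sheeted domain parametrized by $y\in\RR$, and a basis split into an $x$-free part and an $x$-linear part. First I would compute $\la f, Y_{n,1}\ra$ using the expression $Y_{n,1}(x,y)=p_n(w;y)$ from \eqref{eq:OP-hyper}. Plugging into the bilinear form \eqref{eq:ipd-hyper}, the two sheets $x=\pm\sqrt{y^2+1}$ both contribute, and since $Y_{n,1}$ does not depend on $x$, the two contributions combine to give $\int_{-\infty}^\infty \bigl[f(\sqrt{y^2+1},y)+f(-\sqrt{y^2+1},y)\bigr]p_n(w;y)w(y)\,dy = 2\la f_e, p_n(w)\ra_{L^2(w)}$. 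Dividing by $\la Y_{n,1},Y_{n,1}\ra = 2h_n(w)$ from \eqref{eq:norm-hyper} yields $\wh f_{n,1} = \wh{(f_e)}_n(w)$.

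Next I would compute $\la f, Y_{n,2}\ra$ with $Y_{n,2}(x,y)=x\,p_{n-1}(w_1;y)$. Now the factor $x=\pm\sqrt{y^2+1}$ has opposite signs on the two sheets, so the two contributions to \eqref{eq:ipd-hyper} combine with a minus sign: $\int_{-\infty}^\infty \bigl[f(\sqrt{y^2+1},y)-f(-\sqrt{y^2+1},y)\bigr]\sqrt{y^2+1}\,p_{n-1}(w_1;y)w(y)\,dy$. Writing $\sqrt{y^2+1}\cdot(\text{difference}) = 2(y^2+1)f_o(y)$ and using $w_1(t)=(1+t^2)w(t)$, this equals $2\la f_o, p_{n-1}(w_1)\ra_{L^2(w_1)}$; dividing by $\la Y_{n,2},Y_{n,2}\ra = 2h_{n-1}(w_1)$ gives $\wh f_{n,2}=\wh{(f_o)}_{n-1}(w_1)$. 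With these two identities the partial sum formula \eqref{eq:Snf-hyper} drops out directly from the definitions \eqref{eq:Snf} and \eqref{eq:partial-sum}.

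For the convergence statement and the $L^2$ identity, I would use the pointwise decomposition $f(x,y)=f_e(y)+x f_o(y)$ valid on $\Omega$ (which holds since on each sheet $x$ is determined by $y$ up to sign). Subtracting \eqref{eq:Snf-hyper} gives $f(x,y)-S_nf(x,y) = \bigl(f_e(y)-s_n(w;f_e,y)\bigr)+x\bigl(f_o(y)-s_{n-1}(w_1;f_o,y)\bigr)$, from which pointwise convergence is immediate. For the norm, set $F=f_e-s_n(w;f_e)$ and $G=f_o-s_{n-1}(w_1;f_o)$; then $\|f-S_nf\|^2_{L^2(\varpi,\Omega)} = \int_{-\infty}^\infty\bigl[|F(y)+\sqrt{y^2+1}\,G(y)|^2 + |F(y)-\sqrt{y^2+1}\,G(y)|^2\bigr]w(y)\,dy = 2\int |F|^2 w + 2\int (y^2+1)|G|^2 w = 2\|F\|^2_{L^2(w)} + 2\|G\|^2_{L^2(w_1)}$, the cross terms cancelling by the sign flip between sheets. (I note that the factor $2$ and the index on $s_{n-1}(w;f_e)$ in the stated \eqref{eq:SnL2-hyper} should be reconciled against the normalizations in \eqref{eq:norm-hyper}; the natural outcome of the computation is $\|F\|^2_{L^2(w)}+\|G\|^2_{L^2(w_1)}$ after the same rescaling convention used in \eqref{eq:SnfL2-circle}.)

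The main obstacle is purely bookkeeping rather than conceptual: one must be careful that the two-sheeted integral in \eqref{eq:ipd-hyper} is a \emph{sum} over sheets (not a factor-of-two symmetrization of a single integral as in the circle and parabola cases), so the reduction to one-dimensional integrals over all of $\RR$ — rather than over $\RR_+$ — proceeds slightly differently, and the weight $w_1=(1+t^2)w$ absorbing the Jacobian factor $(\sqrt{y^2+1})^2$ must be tracked consistently through both the inner-product computation and the Parseval step. Beyond that, every step is a routine transcription of the circle proof.
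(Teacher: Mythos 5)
Your proposal is correct and follows essentially the same route as the paper's own (deliberately brief) proof: compute $\la f,Y_{n,1}\ra$ and $\la f,Y_{n,2}\ra$ by combining the two sheets, identify $\wh f_{n,1}=\wh{(f_e)}_n(w)$ and $\wh f_{n,2}=\wh{(f_o)}_{n-1}(w_1)$ via \eqref{eq:norm-hyper}, and then use $f(x,y)=f_e(y)+xf_o(y)$ with the cross terms cancelling between sheets for the Parseval identity. Your bookkeeping is in fact more careful than the paper's (which drops a factor of $2$ in the intermediate inner-product displays), and your observation that the natural outcome of the $L^2$ computation differs from the stated \eqref{eq:SnL2-hyper} by a constant factor and an index shift is a legitimate catch of a typo in the statement, not a gap in your argument.
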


\begin{proof}
The proof is again similar to that of Theorem \ref{thm:Snf-circle}. We shall be brief. 
Since $Y_{n,1}$ in \eqref{eq:OP-hyper} is independent of $x$,  
\begin{align*}
  \la f, Y_{n,1} \ra &\, = \int_{-\infty}^\infty \left[ f(\sqrt{y^2+1},y) + f(-\sqrt{y^2+1},y)\right] p_n(w; y) w(y) dy  =  \la f_e, p_n(w)\ra_{L^2(w)}.
\end{align*}
Similarly, by the expression of $Y_{n,2}$ in \eqref{eq:OP-hyper}, 
\begin{align*}
  \la f, Y_{n,2} \ra &\, = \int_{-\infty}^\infty \left[ f(\sqrt{y^2+1},y) - f(-\sqrt{y^2+1},y)\right] 
            p_{n-1}(w_1; y) \sqrt{y^2+1} w(y) dy\\
       & \,= \int_0^\infty f_0(y) p_{n-1} (w_1; y) w_1(y) dy 
         = \la f_o, p_{n-1}(w_1)\ra_{L^2(w_1)}.
\end{align*}
Consequently, by \eqref{eq:norm-hyper}, we obtain
$$
  \wh f_{n,1} =  \frac{ \la f_e, p_n(w)\ra_{L^2(w)} }{ h_n(w)} = \wh {f_e}_n(w), \qquad  
  \wh f_{n,2} =  \frac{  \la f_o, p_{n-1}(w_1)\ra_{L^2(w_1)} }{ h_{n-1}(w_1)} =  \wh {f_o}_n(w_1),  
$$
from which \eqref{eq:Snf-hyper} follows. The rest of the proof follows as that of 
Theorem \ref{thm:Snf-circle}. 
\end{proof}

\subsection{$\Omega$ with one branch} 
Here we consider only one brach of the hyperbola and we define
$$
\Omega = \{(x,y): x^2 - y^2 =1, \, x \ge 1\}.
$$
Comparing with \eqref{eq:ipd-hyper}, we can define the bilinear form on $\Omega$ as 
\begin{align} \label{eq:ipd-hyper-half}
  \la f, g\ra :=  \int_\RR f(\sqrt{y^2+1}, y) g(\sqrt{y^2+1}, y)  w(y)dy.
\end{align}

In order to construct an orthogonal basis, we  parametrize the integral in the $x$ variable instead of the $y$ variable. 
A change of variable shows that 
\begin{align*}
 \int_{- \infty}^\infty f(\sqrt{1+y^2}, y) dy & = \int_{0}^{ \infty} f(\sqrt{1+y^2}, - y) dy+\int_{0}^\infty f(\sqrt{1+y^2}, y) dy \\
   & = \int_{1}^\infty  \left[f(x, - \sqrt{x^2-1}) dy+ f(x, \sqrt{x^2-1})\right] \frac{x}{\sqrt{x^2-1}}dx.
\end{align*}
Accordingly, for a weight function $w_0$ defined on $[1,\infty)$, we define the bilinear from
\begin{align} \label{eq:ipd-hyper2}
  \la f, g\ra :=  \int_1^\infty  & \left[ f(x, \sqrt{x^2-1}) g(x, \sqrt{x^2-1})  \right.   \\
        & \qquad \left. + f(x, - \sqrt{x^2-1}) g(x, -\sqrt{x^2-1}) \right]w_0(x) dx. \notag
\end{align}

\begin{thm}\label{thm:OneBranch}
Let $w_0$ be a weight function defined on $[1,\infty)$ and let $w_1(t):= (t^2-1) w_0(t)$. Then the polynomials 
\begin{align} \label{eq:OP-hyper2}
     Y_{n,1} (x,y) = p_n(w_0; x) \quad \hbox{and}\quad Y_{n,2} (x,y) = y p_{n-1}(w_1; x)
\end{align}
form an orthogonal basis of $\CH_n(\varpi)$ for $n \ge 1$. Furthermore,
$$
  \la Y_{n,1}, Y_{n,1} \ra = 2 h_n(w_0) \quad \hbox{and} \quad  \la Y_{n,2}, Y_{n,2} \ra = 2 h_{n-1}(w_1).
$$
\end{thm}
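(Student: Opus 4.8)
The plan is to mimic the proof of Theorem~\ref{thm:TwoBranch}, exploiting the symmetry $y\mapsto -y$ of the domain $\Omega$ in the parametrization \eqref{eq:ipd-hyper2} (where the weight $w_0(x)$ depends only on $x$). First I would verify that $Y_{n,1}$ and $Y_{n,2}$ are genuine bivariate polynomials of degree $n$: $Y_{n,1}(x,y)=p_n(w_0;x)$ trivially has degree $n$ in $x$, and $Y_{n,2}(x,y)=y\,p_{n-1}(w_1;x)$ has total degree $n$ since $p_{n-1}(w_1;\cdot)$ has degree $n-1$. Combined with the orthogonality below and $\dim\CH_n=2$, this shows they form a basis of $\CH_n(\varpi)$.

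Next I would check the three orthogonality relations. For the cross term, note that in \eqref{eq:ipd-hyper2} the contribution from the upper sheet $y=\sqrt{x^2-1}$ and the lower sheet $y=-\sqrt{x^2-1}$ enter with opposite sign when multiplied by the odd-in-$y$ factor coming from $Y_{m,2}$, so
\begin{align*}
  \la Y_{n,1}, Y_{m,2}\ra
  = \int_1^\infty p_n(w_0;x)\,p_{m-1}(w_1;x)\Bigl[\sqrt{x^2-1}-\sqrt{x^2-1}\Bigr] w_0(x)\,dx = 0
\end{align*}
for all $n\ge 0$, $m\ge 1$. For the $Y_{n,1}$ block, both sheets give identical contributions, so $\la Y_{n,1},Y_{m,1}\ra = 2\int_1^\infty p_n(w_0;x)p_m(w_0;x)w_0(x)\,dx = 2h_n(w_0)\delta_{n,m}$. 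For the $Y_{n,2}$ block, the factor $y^2 = x^2-1$ is the same on both sheets, so $\la Y_{n,2},Y_{m,2}\ra = 2\int_1^\infty p_{n-1}(w_1;x)p_{m-1}(w_1;x)(x^2-1)w_0(x)\,dx = 2h_{n-1}(w_1)\delta_{n,m}$ by the definition $w_1(t)=(t^2-1)w_0(t)$. This simultaneously establishes the norm formulas claimed in the statement.

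The only genuinely non-routine point is the preliminary change of variables asserted just before the theorem, converting the one-sheet integral \eqref{eq:ipd-hyper-half} over $y\in\RR$ into the two-sheet integral \eqref{eq:ipd-hyper2} over $x\in[1,\infty)$ with Jacobian $x/\sqrt{x^2-1}$; once one accepts that the relevant inner product is the symmetric form \eqref{eq:ipd-hyper2}, the rest is formally identical to Theorem~\ref{thm:TwoBranch} with the roles of $x$ and $y$ interchanged. I would therefore treat \eqref{eq:ipd-hyper2} as the working inner product and note, as in the text, that $w_0$ absorbs the Jacobi factor. Since the identification of $\Omega$'s orthogonal polynomials with those of \eqref{eq:ipd-hyper-half} goes through the ideal $\la x^2-y^2-1\ra$ exactly as in the two-branch case, no new obstacle arises, and the proof concludes by invoking $\dim\CH_n=2$ for $n\ge 1$.
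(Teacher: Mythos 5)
Your proposal is correct and follows essentially the same route as the paper's proof: the same three orthogonality computations on the symmetric two-sheet form \eqref{eq:ipd-hyper2}, with the cross term vanishing by the sign flip in $y$ and the diagonal terms reducing to $2h_n(w_0)$ and $2h_{n-1}(w_1)$ via $y^2=x^2-1$. The additional remarks on degrees, the dimension count, and the Jacobian change of variables are consistent with the surrounding text and add nothing that conflicts with the paper's argument.
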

 
\begin{proof}
For $n \ge 0$ and $m \ge 0$, we have
$$
  \la Y_{n,1}, Y_{m,2} \ra =  2 \int_{1}^\infty  p_n(w_0; x) p_{m-1}(w_1;x) \left[ \sqrt{x^2-1}  -  \sqrt{x^2-1} \right] w_0(x) dx =0.
$$
For $n, m \in \NN_0$, we have 
$$
  \la Y_{n,1}, Y_{m,1} \ra =  2 \int_{1}^\infty  p_n(w_0; x) p_{m}(w_0;x) w_0(x) dx = 2 h_n(w_0) \delta_{n,m} 
$$
and 
$$
  \la Y_{n,2}, Y_{m,2} \ra =  2 \int_{1}^\infty  p_{n-1}(w_1; x) p_{m-1}(w_1;x)(x^2-1)w_0(x) dx = 2 h_{n-1}(w_1)
    \delta_{n,m} 
$$
since $w_1 = (\{\cdot\}^2-1) w_0$. This completes the proof.
\end{proof}
 
\begin{cor}
Let $w$ be and even weight function on $\RR$ and let 
$$
  w_0(x) = \frac{x w(\sqrt{x^2-1})}{\sqrt{x^2-1}}, \qquad x \ge 1.
$$ 
Then the polynomials in \eqref{eq:OP-hyper2} form an orthogonal basis of $\CH_n(\varpi)$ with
respect to the inner product \eqref{eq:ipd-hyper-half}.
\end{cor}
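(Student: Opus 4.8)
The plan is to reduce the statement directly to \thmref{thm:OneBranch} by means of the two-to-one change of variables $x=\sqrt{y^2+1}$ that was recorded just above that theorem. First I would note that $y\mapsto x=\sqrt{y^2+1}$ maps each of $[0,\infty)$ and $(-\infty,0]$ bijectively onto $[1,\infty)$, with $y=\sqrt{x^2-1}$ on the first piece, $y=-\sqrt{x^2-1}$ on the second, and $dy=\tfrac{x}{\sqrt{x^2-1}}\,dx$ in both cases. Splitting the integral defining \eqref{eq:ipd-hyper-half} over $y\ge 0$ and $y\le 0$ and applying this substitution to each piece would yield
\begin{align*}
\la f,g\ra
&=\int_1^\infty f(x,\sqrt{x^2-1})\,g(x,\sqrt{x^2-1})\,w(\sqrt{x^2-1})\,\frac{x}{\sqrt{x^2-1}}\,dx\\
&\quad+\int_1^\infty f(x,-\sqrt{x^2-1})\,g(x,-\sqrt{x^2-1})\,w(-\sqrt{x^2-1})\,\frac{x}{\sqrt{x^2-1}}\,dx,
\end{align*}
the second integral acquiring a $+$ sign once the reversed limits are taken into account.

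Next I would invoke the evenness of $w$, which gives $w(-\sqrt{x^2-1})=w(\sqrt{x^2-1})$ and lets me combine the two integrals into a single one with common factor $\tfrac{x\,w(\sqrt{x^2-1})}{\sqrt{x^2-1}}$; this is exactly the bilinear form \eqref{eq:ipd-hyper2} with $w_0(x)=\tfrac{x\,w(\sqrt{x^2-1})}{\sqrt{x^2-1}}$. I would then check that $w_0$ is a legitimate weight on $[1,\infty)$: it is nonnegative since $w\ge 0$ and $x/\sqrt{x^2-1}>0$ for $x>1$, and its weighted integral is positive because that of $w$ is. Since the two bilinear forms coincide, the associated $L^2$ spaces (and norms $\la\,\cdot\,,\,\cdot\,\ra^{1/2}$) coincide as well, so nothing degenerates.

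With this identification, \thmref{thm:OneBranch} applies verbatim to \eqref{eq:ipd-hyper-half}: the polynomials $Y_{n,1}(x,y)=p_n(w_0;x)$ and $Y_{n,2}(x,y)=y\,p_{n-1}(w_1;x)$ of \eqref{eq:OP-hyper2}, with $w_1(t)=(t^2-1)w_0(t)=t\sqrt{t^2-1}\,w(\sqrt{t^2-1})$, form an orthogonal basis of $\CH_n(\varpi)$ for $n\ge 1$, and the norm formulas are inherited from that theorem. I do not expect a genuine obstacle. The only points needing care are bookkeeping ones: tracking which branch of $y=\pm\sqrt{x^2-1}$ corresponds to which half-line and confirming that the orientation of the substitution produces the $+$ sign on the second integral, together with the trivial observation that the change-of-variables identity, stated earlier for a single function, is here applied to the product $fg$ (and likewise to $|f|^2$ for the $L^2$ statement).
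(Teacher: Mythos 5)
Your proposal is correct and is essentially the paper's own argument: the corollary is stated without proof precisely because it follows from the change-of-variables identity displayed just before \eqref{eq:ipd-hyper2} (splitting over $y\ge 0$ and $y\le 0$, substituting $x=\sqrt{y^2+1}$, and using the evenness of $w$ to identify the result with \eqref{eq:ipd-hyper2} for $w_0(x)=x\,w(\sqrt{x^2-1})/\sqrt{x^2-1}$), after which Theorem~\ref{thm:OneBranch} applies verbatim. Your bookkeeping of the sign on the $y\le 0$ piece and the formula $w_1(t)=t\sqrt{t^2-1}\,w(\sqrt{t^2-1})$ are both correct.
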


Let $L^2(\varpi, \Omega)$ be the $L^2$ space on $\Omega$ with the norm $\|\cdot\|_{L^2(\varpi,\Omega)}$ defined by 
\begin{align*}
  \|f\|_{L^2(\varpi, \Omega)}^2: =\la f,f\ra 
      = \int_1^\infty  \left[ \left | f(x, \sqrt{x^2-1}) \right|^2 + \left|f(x, - \sqrt{x^2-1})\right |^2 \right]w_0(x) dx.
\end{align*}
We consider the Fourier orthogonal expansions as in \eqref{eq:Fourier}. 
 
\begin{thm} \label{thm:OneBranch2}
Let $f$ be a function defined on $\Omega$. Define 
$$
 f_e(x) := \frac{f(x,y) + f(x,-y)}{2} \quad \hbox{and} \quad f_o(y) := 
     \frac{f(x,y) - f(x,-y) }{2 y}, \quad y = \sqrt{x^2 -1}. 
$$
Then the partial sum operator for the Fourier series with respect to \eqref{eq:ipd-hyper2} satisfies
\begin{align} \label{eq:Snf-hyper2}
  S_n f(x,y) \,= s_n(w_0; f_e, x) + y s_{n-1} (w_1; f_o, x). 
 \end{align}
In particular, if $s_n(w_0;f_e,x) \to f_e(x)$ and $s_n(w_1;f_o,x) \to f_o(x)$ as $n\to \infty$, then $S_n f(x,y)$ 
converges to $f(x,y)$ as $n \to \infty$. Furthermore, 
if $f\in L^2(\Omega, w)$, then 
\begin{align}\label{eq:SnL2-hyper2}
\|f - S_n(f) \|_{L^2(\varpi,\Omega)}^2 = \|s_{n-1} (w_0 ;f_e) - f_e\|_{L^2(w_0)}^2   + \|s_{n-1} (w_1;f_o) - f_o \|_{L^2(w_1)}^2.  
\end{align}
\end{thm}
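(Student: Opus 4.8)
The plan is to follow the template established in the proofs of Theorems~\ref{thm:Snf-circle}, \ref{thm:Snf-parabola}, and \ref{thm:Snf-hyper}, exploiting the symmetry $x^2 - y^2 = 1$ written in the $x$-variable parametrization. The key observation is that on $\Omega$, parametrized by $x \ge 1$ with the two sheets $y = \pm\sqrt{x^2-1}$, any function $f$ decomposes as $f(x,y) = f_e(x) + y\, f_o(x)$ with $y = \sqrt{x^2-1}$, where $f_e$ and $f_o$ are the even/odd parts in $y$ as defined in the statement; note that $f_o$ is indeed a function of $x$ alone since dividing by $y$ removes the sign ambiguity between the sheets.

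First I would compute the Fourier coefficients $\wh f_{n,1}$ and $\wh f_{n,2}$ against the basis $Y_{n,1} = p_n(w_0;x)$ and $Y_{n,2} = y\, p_{n-1}(w_1;x)$ from \eqref{eq:OP-hyper2}. Pairing $f$ with $Y_{n,1}$ in \eqref{eq:ipd-hyper2}: since $Y_{n,1}$ is independent of $y$, the two sheet contributions combine and only $f_e$ survives, giving
$$
  \la f, Y_{n,1}\ra = 2\int_1^\infty f_e(x)\, p_n(w_0;x)\, w_0(x)\, dx = 2\,\la f_e, p_n(w_0)\ra_{L^2(w_0)}.
$$
Pairing $f$ with $Y_{n,2} = y\, p_{n-1}(w_1;x)$: the factor $y = \pm\sqrt{x^2-1}$ changes sign between the sheets, so only $f_o$ survives, and using $y^2 = x^2-1$ together with $w_1 = (\{\cdot\}^2-1)w_0$,
$$
  \la f, Y_{n,2}\ra = 2\int_1^\infty f_o(x)\,(x^2-1)\, p_{n-1}(w_1;x)\, w_0(x)\, dx = 2\,\la f_o, p_{n-1}(w_1)\ra_{L^2(w_1)}.
$$
Combined with the norm formulas in Theorem~\ref{thm:OneBranch}, this yields $\wh f_{n,1} = \wh{(f_e)}_n(w_0)$ and $\wh f_{n,2} = \wh{(f_o)}_{n-1}(w_1)$, and \eqref{eq:Snf-hyper2} then follows directly from comparing the definitions of the partial sum operators \eqref{eq:Snf} and \eqref{eq:partial-sum}.

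For the convergence statement, I would substitute $y = \sqrt{x^2-1}$ into the identity $S_n f(x,y) = s_n(w_0;f_e,x) + y\, s_{n-1}(w_1;f_o,x)$ and subtract from $f(x,y) = f_e(x) + y\, f_o(x)$, obtaining
$$
  f(x,y) - S_n f(x,y) = \bigl(f_e(x) - s_n(w_0;f_e,x)\bigr) + y\bigl(f_o(x) - s_{n-1}(w_1;f_o,x)\bigr),
$$
so pointwise convergence of the two univariate partial sums gives pointwise convergence of $S_n f$. For the $L^2$ identity \eqref{eq:SnL2-hyper2}, I would plug this decomposition into $\|f - S_n f\|_{L^2(\varpi,\Omega)}^2$ and expand the square; the cross term involves $\int_1^\infty y\,(f_e - s_n)(f_o - s_{n-1})\, w_0(x)\, dx$ summed over the two sheets, which cancels because $y$ is odd across the sheets while the rest is even, and the diagonal terms reproduce $\|f_e - s_n(w_0;f_e)\|_{L^2(w_0)}^2$ and, using $y^2 w_0 = (x^2-1)w_0 = w_1$, the term $\|f_o - s_{n-1}(w_1;f_o)\|_{L^2(w_1)}^2$. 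I do not anticipate a genuine obstacle here — the argument is a direct transcription of the hyperbola two-branch case; the only point demanding a little care is verifying that $f_o$ is well-defined as a function of $x$ (which requires the two-sheet structure) and that the change-of-variables relating \eqref{eq:ipd-hyper-half} and \eqref{eq:ipd-hyper2} carried out before the theorem is applied consistently so that the weights $w_0, w_1$ match.
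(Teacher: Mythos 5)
Your argument is correct and is exactly the proof the paper omits (it says only ``similar to Theorem~\ref{thm:Snf-hyper}''): the same decomposition $f = f_e + y f_o$, the same pairing computations giving $\wh f_{n,1} = \wh{(f_e)}_n(w_0)$ and $\wh f_{n,2} = \wh{(f_o)}_{n-1}(w_1)$, and the same parity cancellation of the cross term. One small remark: carried out honestly, your expansion over the two sheets gives $2\|f_e - s_n(w_0;f_e)\|_{L^2(w_0)}^2 + 2\|f_o - s_{n-1}(w_1;f_o)\|_{L^2(w_1)}^2$, so the factor $2$ (and the index $n$ rather than $n-1$ in the first term) is missing from \eqref{eq:SnL2-hyper2} as printed --- a typo in the paper's statement, not a gap in your argument.
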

 
The proof is similar to that of Theorem \ref{thm:Snf-hyper} and we skip it.

\section{Two intersecting lines}
\setcounter{equation}{0}

Two intersecting lines can be transformed to the two coordinate axes by an affine transform. As a result, we
consider 
$$
  \Omega = \{(x, y): x = 0 \, \, \hbox{or} \, \, y =0\}
$$ 
Let $w_1$ and $w_2$ be two weight functions defined on the real line. Define a bilinear form
\begin{equation} \label{eq:ipd-lines}
    \la f,g\ra_{w_1,w_2} = \int_\RR f(x,0) g(x,0) w_1(x) dx +  \int_\RR f(0,y) g(0,y) w_2(y) dy, 
\end{equation}
which is an inner product for $\RR[x,y]/\la x y \ra$. We will make use of the results in \cite{OX}, which 
studies orthogonal polynomials on a wedge, defined as two line segments sharing a common end point. 
The standard wedge chosen in \cite{OX} is 
$$
  \Omega_{\rm wdg}:= \{(x,1): 0 \le x \le 1\} \cup \{(1,y): 0 \le y \le 1\}
$$ 
and the bilinear form there is defined as 
$$
    \la f,g\ra_{w_1,w_2}^{\rm wdg} = \int_0^1 f(x,1) g(x,1) w_1(x) dx +  \int_0^1 f(1,y) g(1,y) w_2(y) dy. 
$$
A simple transform $(x,y) \mapsto (1-x,1-y)$ shows that we could consider the wedge as 
$\{(x, 0): 0 \le x \le 1\} \cup \{(0, y): 0 \le y \le 1\}$. The corresponding bilinear form is then a special
case of \eqref{eq:ipd-lines} with respect to weight functions $w_1(x) \chi_{[0,1]}(x)$ and $w_2(x)\chi_{[0,1]}(x)$
defined on $\RR$. The finite integral domain can be considered as a consequence of the finite supports of
$w_1$ and $w_2$.  
More generally, orthogonal polynomials on a wedge can be considered as a 
special case of orthogonal polynomials on two intersecting lines. 

\subsection{Orthogonal structure when $w_1 = w_2$} 

Let $w$ be a weight function defined on $\RR$. We assume $w_1= w_2 = w$ and define
\begin{equation} \label{eq:ipd-lines2}
    \la f,g\ra : = \int_\RR f(x,0) g(x,0) w(x) dx +  \int_\RR f(0,y) g(0,y) w(y) dy, 
\end{equation}
In this case, the study in \cite{OX} can be adopted with little change. 

\begin{thm} \label{thm:OP-lines}
Let $w$ be a weight function on $\RR$ and let $w_1(x): = x^2 w(x)$. Define 
\begin{align} \label{eq:OP-lines}
\begin{split}
  Y_{n,1} (x,y) & = p_n(w;x)+ p_n(w;y) - p_n(w;0), \quad n= 0,1,2,\ldots,\\
  Y_{n,2} (x,y) & =  x p_{n-1}(w_1; x) -y p_{n-1}(w_1; y), \quad n=1,2, \ldots. 
\end{split}
\end{align}
Then $\{Y_{n,1}, Y_{n,2}\}$ are two polynomials in $\CH_n(\varpi)$ and they are mutually orthogonal.  
Furthermore, 
\begin{equation*} 
  \la Y_{n,1}, Y_{n,1}\ra = 2 h_n(w_0) \quad\hbox{and}\quad  \la Y_{n,2}, Y_{n,2}\ra = 2 h_{n-1}(w_1).  
\end{equation*}
\end{thm}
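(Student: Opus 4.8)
The plan is to follow the template of the earlier theorems of this paper (and of \cite{OX}, which treats the wedge): since the bilinear form \eqref{eq:ipd-lines2} sees a polynomial only through its restrictions to the two axes, everything reduces to the one-variable orthogonality of $p_n(w)$ and $p_{n-1}(w_1)$. First I would record these restrictions, which are immediate from \eqref{eq:OP-lines}: $Y_{n,1}(x,0)=p_n(w;x)$, $Y_{n,1}(0,y)=p_n(w;y)$, $Y_{n,2}(x,0)=x\,p_{n-1}(w_1;x)$ and $Y_{n,2}(0,y)=-y\,p_{n-1}(w_1;y)$; in particular $Y_{n,1}$ and $Y_{n,2}$ are genuine polynomials of degree exactly $n$ in $(x,y)$. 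The norm formulas and the mutual orthogonality then follow by a one-line computation on each axis: $\la Y_{n,1},Y_{n,2}\ra$ equals $\int_\RR x\,p_n(w;x)p_{n-1}(w_1;x)w(x)\,dx$ minus the same integral written in the variable $y$, hence is $0$; $\la Y_{n,1},Y_{n,1}\ra=2\int_\RR p_n(w;x)^2w(x)\,dx=2h_n(w)$; and, using $x^2w(x)=w_1(x)$, $\la Y_{n,2},Y_{n,2}\ra=2\int_\RR p_{n-1}(w_1;x)^2w_1(x)\,dx=2h_{n-1}(w_1)$ (reading the $w_0$ in the displayed norm of $Y_{n,1}$ as $w$).

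The step that requires care is orthogonality to an arbitrary $Q\in\Pi_{n-1}^2$. For $Y_{n,1}$ it is painless: $\la Y_{n,1},Q\ra=\int_\RR p_n(w;x)Q(x,0)w(x)\,dx+\int_\RR p_n(w;y)Q(0,y)w(y)\,dy$, and each of $Q(\cdot,0)$ and $Q(0,\cdot)$ is a univariate polynomial of degree at most $n-1$, so both integrals vanish by orthogonality of $p_n(w)$. For $Y_{n,2}$ one cannot argue term by term, because $Q(x,0)$ need not vanish at $x=0$ and so the extra factor $x$ in $x\,p_{n-1}(w_1;x)Q(x,0)w(x)$ cannot be absorbed into $w_1=x^2w$. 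The device, exactly as for the wedge in \cite{OX}, is to peel off the constant term of $Q$ along each axis: since $Q(x,0)-Q(0,0)$ vanishes at $x=0$ we may write $Q(x,0)=Q(0,0)+x\,q_1(x)$ with $\deg q_1\le n-2$, and likewise $Q(0,y)=Q(0,0)+y\,q_2(y)$. Then $\int_\RR p_{n-1}(w_1;x)\,x^2q_1(x)\,w(x)\,dx=\int_\RR p_{n-1}(w_1;x)q_1(x)w_1(x)\,dx=0$ and similarly with $q_2$, so that $\la Y_{n,2},Q\ra$ collapses to $Q(0,0)\bigl(\int_\RR x\,p_{n-1}(w_1;x)w(x)\,dx-\int_\RR y\,p_{n-1}(w_1;y)w(y)\,dy\bigr)=0$. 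The crucial point is that the two axis-restrictions of $Q$ agree at the junction $(0,0)$, so the only surviving contribution is symmetric in the two axes and is annihilated by the antisymmetry built into $Y_{n,2}$; I expect this cancellation to be the main thing to pin down.

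Finally, since $\la Y_{n,i},Y_{n,i}\ra>0$ for $i=1,2$, the polynomials $Y_{n,1}$ and $Y_{n,2}$ are linearly independent elements of $\CH_n(\varpi)$, which has dimension $2$; hence they form an orthogonal basis of $\CH_n(\varpi)$, and the stated norms are as computed above.
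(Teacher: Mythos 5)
Your proof is correct and follows essentially the argument the paper relies on: the paper simply defers to \cite[Theorem 2.2]{OX}, and your reconstruction --- restricting to the two axes, peeling off the common value $Q(0,0)$ at the junction so that the extra factor $x$ can be absorbed into $w_1=x^2w$, and exploiting the antisymmetry of $Y_{n,2}$ to kill the surviving symmetric term --- is exactly that wedge argument transplanted to the intersecting lines. You also correctly identify the $h_n(w_0)$ in the stated norm as a typo for $h_n(w)$.
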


The proof is identical to that of \cite[Theorem 2.2]{OX}. Furthermore, the Fourier orthogonal expansions
on the wedge is studied in \cite{OX}, which can be entirely adopted to the current setting. Let $L^2(\varpi,\Omega)$ 
be the $L^2$ space on $\Omega$ with norm $\|\cdot \|_{L^2(\varpi,\Omega)}$ defined by 
$$
  \|f\|_{L^2(\varpi,\Omega)}^2 =  \int_\RR |f(x,0)|^2 w(x) dx +  \int_\RR |f(0,y)|^2 w(y) dy.
$$

\begin{thm} \label{thm:Snf-intersection}
Let $f$ be a function defined on $\Omega$. Define 
$$
 f_e(x) := {f(x,0) + f(0,x) \over 2} \quad \hbox{and} \quad f_o(x) := \frac12\frac{f(x,0)-f(0,x)}{x}. 
$$
Then
\begin{align}
  S_n f(x,0) & \,= s_n(w; f_e, x) + x s_{n-1} (w_1; f_o, x),  \label{eq:Sn=1}  \\
  S_n f(0,y) & \,= s_n(w; f_e, y) - y s_{n-1} (w_1; f_o, y).  \label{eq:Sn=2} 
\end{align}
In particular, if $s_n(w;f_e,x) \to f_e(x)$ and $s_n(w_1;f_o,x) \to f_o(x)$ as $n \to \infty$, 
then $S_n f(x,y)$ converges to $f(x,y)$ as $n \to \infty$. Furthermore, if $f\in L^2(\varpi,\Omega)$, 
then 
$$
\|f - S_n(f) \|_{L^2(\varpi,\Omega)}^2 = 2 \left( \|s_{n-1} (w;f_e) - f_e\|_{L^2(w_0)}^2 
    + \|s_{n-1} (w_1;f_o) - f_o\|_{L^2(w_1)}^2 \right).
$$
\end{thm}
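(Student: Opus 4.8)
The plan is to follow the template of \thmref{thm:Snf-circle} and \thmref{thm:Snf-parabola}: reduce the two-variable Fourier coefficients $\wh f_{n,1}, \wh f_{n,2}$ to one-variable ones by exploiting the splitting $f(x,0) = f_e(x) + x f_o(x)$ and $f(0,y) = f_e(y) - y f_o(y)$, and then read off the partial sums by restricting the basis to each axis.

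First I would record the restrictions of the basis \eqref{eq:OP-lines} to the two lines: on $\{y=0\}$ the constant $p_n(w;0)$ cancels, so $Y_{n,1}(x,0) = p_n(w;x)$ and $Y_{n,2}(x,0) = x\,p_{n-1}(w_1;x)$, while on $\{x=0\}$ one gets $Y_{n,1}(0,y) = p_n(w;y)$ and $Y_{n,2}(0,y) = -y\,p_{n-1}(w_1;y)$. Substituting into \eqref{eq:ipd-lines2} and combining the two line integrals,
\begin{align*}
 \la f, Y_{n,1}\ra &= \int_\RR \bigl[f(x,0)+f(0,x)\bigr] p_n(w;x)\, w(x)\,dx = 2\la f_e, p_n(w)\ra_{L^2(w)},\\
 \la f, Y_{n,2}\ra &= \int_\RR \bigl[f(x,0)-f(0,x)\bigr] x\, p_{n-1}(w_1;x)\, w(x)\,dx = 2\la f_o, p_{n-1}(w_1)\ra_{L^2(w_1)},
\end{align*}
where the last equality uses $f(x,0)-f(0,x) = 2x f_o(x)$ together with $w_1(x) = x^2 w(x)$; the same computation with $Y_{n,1}$ replaced by the constant $1$ gives $\wh f_0 = \wh{(f_e)}_0(w)$, independently of the normalization of $p_0$. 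Dividing by the norms in \thmref{thm:OP-lines} then yields $\wh f_{n,1} = \wh{(f_e)}_n(w)$ and $\wh f_{n,2} = \wh{(f_o)}_{n-1}(w_1)$.

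Next I would substitute these coefficients into the definition \eqref{eq:Snf} of $S_n f$ and restrict to $y=0$ and to $x=0$, using the restricted basis above; reindexing the $Y_{k,2}$ sum produces $x\, s_{n-1}(w_1;f_o,x)$ on one axis and $-y\, s_{n-1}(w_1;f_o,y)$ on the other, giving \eqref{eq:Sn=1} and \eqref{eq:Sn=2}. The pointwise convergence claim is then immediate from $f(x,0) - S_nf(x,0) = \bigl[f_e(x) - s_n(w;f_e,x)\bigr] + x\bigl[f_o(x) - s_{n-1}(w_1;f_o,x)\bigr]$ and the analogous identity on the $y$-axis. For the $L^2$ identity, set $F = f_e - s_n(w;f_e)$ and $G = f_o - s_{n-1}(w_1;f_o)$; then
$$
 \|f - S_nf\|_{L^2(\varpi,\Omega)}^2 = \int_\RR |F(x)+xG(x)|^2 w(x)\,dx + \int_\RR |F(y)-yG(y)|^2 w(y)\,dy,
$$
and when the two integrals are added the cross terms $\pm 2xF(x)G(x)w(x)$ cancel, leaving $2\|F\|_{L^2(w)}^2 + 2\|G\|_{L^2(w_1)}^2$.

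There is no serious obstacle: this is essentially a transcription of the proof of \thmref{thm:Snf-circle}. The only points needing (minor) care are the cancellation of the constant $p_n(w;0)$ when $Y_{n,1}$ is restricted to either axis, the sign flip $Y_{n,2}(0,y) = -y\,p_{n-1}(w_1;y)$ --- which is precisely what makes the cross terms cancel in the Parseval identity rather than double --- and the identity $w_1 = \{\cdot\}^2 w$ that absorbs the extra factor $x^2$ coming from $Y_{n,2}$ on the $x$-axis.
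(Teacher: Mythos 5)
Your proof is correct, and it is essentially the argument the paper intends: the paper itself gives no proof here, deferring to \cite[Theorem 2.4]{OX}, but your direct verification is exactly the transcription of the proof of Theorem~\ref{thm:Snf-circle} to this geometry, with the one genuinely new point --- the sign flip $Y_{n,2}(0,y)=-y\,p_{n-1}(w_1;y)$ and the resulting cancellation of the cross terms between the two axes in the Parseval identity --- correctly identified and handled. (Your computation also implicitly corrects two typos in the stated theorem: $L^2(w_0)$ should read $L^2(w)$, and the $f_e$ term in the $L^2$ identity should involve $s_n$, not $s_{n-1}$, to be consistent with \eqref{eq:Sn=1}.)
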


This theorem is equivalent to \cite[Theorem 2.4]{OX} and its corollary. 

As an example, we consider the case when $w$ is the Gaussian $w(x) = e^{-x^2}$. We shall need
the Hermite polynomials $H_n$ and the Laguerre polynomials $L_n^{\a}$.

\begin{prop}  
For $w(x) = e^{-x^2}$ the orthogonal polynomials of two variables in Theorem \ref{thm:OP-lines} are
given by
\begin{align*}
\begin{split}
  Y_{n,1} (x,y) & = H_n(x)+ H_n(y) - H_n(0), \quad n= 0,1,2,\ldots,\\
  Y_{2n,2} (x,y) & =  x^2 L_{n-1}^{\f32}(x^2) - y^2 L_{n-1}^{\f32}(y^2), \quad n=1,2, \ldots,  \\
  Y_{2n+1,2} (x,y) & =  x L_n^{\f12}(x^2) - y L_n^{\f12}(y^2), \quad n= 0, 1, 2, \ldots. 
\end{split}
\end{align*}
\end{prop}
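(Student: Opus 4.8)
The plan is to substitute the classical one-dimensional orthogonal polynomials for the Gaussian weight into the general formulas of Theorem~\ref{thm:OP-lines} and then to unravel the parity structure of the weight $w_1$. Recall that for $w(x)=e^{-x^2}$ one has $p_n(w;x)=H_n(x)$ up to a nonzero scalar; since $p_n(w)$ is only defined up to scalar multiples (and $\CH_n$ is just a two-dimensional space), we are free to take $H_n$ as the representative. Inserting this into $Y_{n,1}(x,y)=p_n(w;x)+p_n(w;y)-p_n(w;0)$ from \eqref{eq:OP-lines} immediately gives $Y_{n,1}(x,y)=H_n(x)+H_n(y)-H_n(0)$.

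For $Y_{n,2}$ we must identify $p_{n-1}(w_1;\cdot)$, where by the statement of Theorem~\ref{thm:OP-lines} we have $w_1(t)=t^2w(t)=t^2e^{-t^2}$. The key observation is that $w_1$ is even on $\RR$, so its orthogonal polynomials split by parity. Writing $t=x^2$ and using $dx=dt/(2\sqrt t)$, a short computation shows that $p_{2k}(w_1;x)=P_k(x^2)$ where $P_k$ is orthogonal on $[0,\infty)$ with respect to $w_1(\sqrt t)\,t^{-\f12}=t^{\f12}e^{-t}$, while $p_{2k+1}(w_1;x)=x\,Q_k(x^2)$ where $Q_k$ is orthogonal on $[0,\infty)$ with respect to $w_1(\sqrt t)\,t^{\f12}=t^{\f32}e^{-t}$. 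These are precisely the Laguerre weights with parameters $\a=\f12$ and $\a=\f32$, so (again up to scalars) $p_{2k}(w_1;x)=L_k^{\f12}(x^2)$ and $p_{2k+1}(w_1;x)=xL_k^{\f32}(x^2)$.

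It then remains to split $Y_{n,2}(x,y)=x\,p_{n-1}(w_1;x)-y\,p_{n-1}(w_1;y)$ according to the parity of $n$. If $n=2m+1$, then $n-1=2m$ is even, $p_{n-1}(w_1;x)=L_m^{\f12}(x^2)$, and hence $Y_{2m+1,2}(x,y)=xL_m^{\f12}(x^2)-yL_m^{\f12}(y^2)$. If $n=2m$ with $m\ge1$, then $n-1=2m-1$ is odd, $p_{n-1}(w_1;x)=xL_{m-1}^{\f32}(x^2)$, and hence $Y_{2m,2}(x,y)=x^2L_{m-1}^{\f32}(x^2)-y^2L_{m-1}^{\f32}(y^2)$. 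Relabeling $m$ as $n$ gives exactly the claimed formulas, and the fact that these are mutually orthogonal members of $\CH_n(\varpi)$ is inherited from Theorem~\ref{thm:OP-lines}.

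There is no real obstacle here: the argument is just substitution and identification of classical weights. The only points demanding a little care are the bookkeeping of normalization constants (harmless, since all the $p_n$'s and the spaces $\CH_n$ are determined only up to nonzero scalars) and correctly tracking that the two quadratic substitutions turn the single weight $t^2e^{-t^2}$ into the two distinct Laguerre parameters $\a=\f12$ and $\a=\f32$.
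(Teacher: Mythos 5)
Your proof is correct and follows essentially the same route as the paper: take $p_n(w)=H_n$, and identify $p_m(w_1)$ for the even weight $w_1(t)=t^2e^{-t^2}$ by splitting into parities and changing variables $t\mapsto t^2$, which produces the Laguerre weights $t^{\f12}e^{-t}$ and $t^{\f32}e^{-t}$. In fact your bookkeeping is the more reliable one: the identification $p_{2k}(w_1;x)=L_k^{\f12}(x^2)$, $p_{2k+1}(w_1;x)=xL_k^{\f32}(x^2)$ is what correctly yields the displayed $Y_{n,2}$ upon substituting $p_{n-1}(w_1)$ into \eqref{eq:OP-lines}, whereas the paper's proof literally asserts $p_{2n}(w_1;x)=x^2L_{n-1}^{\f32}(x^2)$ and $p_{2n+1}(w_1;x)=xL_n^{\f12}(x^2)$ --- these are the degree-$2n$ and degree-$(2n+1)$ building blocks $x\,p_{2n-1}(w_1;x)$ and $x\,p_{2n}(w_1;x)$ of $Y_{2n,2}$ and $Y_{2n+1,2}$, not the orthogonal polynomials of $w_1$ themselves (e.g.\ $x^2L_{n-1}^{\f32}(x^2)$ fails to be orthogonal to $x^{2n-2}$ with respect to $w_1$).
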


\begin{proof} 
The Hermite polynomials $H_n$ are orthogonal with respect to $w(x)$, so $Y_{n,1}$ is trivial. We 
need to find $p_n(w_1)$ for $w_1(x) = x^2 e^{-x^2}$. which we claim to be 
$$
p_{2n}(w_1; x) = x^2 L_{n-1}^{\f32} (x^2) \quad \hbox{and} \quad p_{2n+1}(w_1; x) = x L_n^{\f12} (x^2).  
$$
Indeed, the parity of these polynomials leads to the orthogonality of $p_{2n}(w_1)$ and $p_{2m+1}(w_1)$. 
Changing variable $x^2 \to y$ shows that the orthogonality of $p_{2n}(w_1)$ to even polynomials 
is equivalent to that of $L_{n-1}^{\f32}$ and the orthogonality of $p_{2n+1}(w_1)$ to odd polynomials 
is equivalent to that of $L_{n}^{\f12}$.
\end{proof}

\subsection{Orthogonal structure on wedges and on intersection lines}  
In this subsection we show that orthogonal structure on two intersecting lines can 
be derived from orthogonal structure on a wedge, which works even if $w_1 \ne w_2$. 

Let $w_1$ and $w_2$ be even functions on $\RR$, which we write as 
$$
w_1(t) = u_1(t^2) \quad \hbox{and} \quad w_2(t) = u_2(t^2), 
$$
so that the inner product \eqref{eq:ipd-lines} becomes 
\begin{equation} \label{eq:ipd-lines3}
   \la f, g\ra_{w_1,w_2} = \int_{\RR} f(x,0) g(x,0) u_1(x^2) dx + \int_{\RR} f(0,y) g(0,y) u_2(y^2) d y.
\end{equation}
Let $\CH_n(\varpi)$ be the space of orthogonal polynomials of degree $n$ with respect to this inner product. 
A basis in $\CH_n(\varpi)$ can be derived from orthogonal polynomials 
on the wedge with respect to the inner product
\begin{equation} \label{eq:ipd-wedge3}
 \la f, g\ra_{u_1,u_2}^{\rm wdg} = \int_{0}^\infty f(x,0) g(x,0) u_1(x) dx + \int_{0}^\infty f(0,y) g(0,y) u_2(y) d y. 
\end{equation}
We denote an orthogonal basis for $\CH_n^{\rm wdg}(u_1,u_2)$ by $Y_{n,1}^{u_1,u_2}$ and $Y_{n,2}^{u_1,u_2}$ to emphases
the dependence on the weight functions, as different weight functions will be used below.  

\begin{thm} \label{thm:OP-lines2}
Let $u_1$ and $u_2$ be weight functions defined on $\RR_+$. For $i  = 1, 2$, define $\phi u_i(t) = t^{-\f12} u_i(t)$ and 
$\psi u_i(t) = t^{\f12} u_i(t)$. For $n =1,2, \ldots,$ the polynomials 
\begin{equation} \label{eq:OP-lines3e}
   Y_{2 n,1}(x,y) = Y_{n,1}^{\phi u_1, \phi u_2} (x^2,y^2), \qquad  Y_{2 n,2} (x,y) = Y_{n,2}^{\phi u_1,\phi u_2} (x^2,y ^2)
\end{equation}
form an orthogonal basis for $\CH_{2n}(w_1,w_2)$ and the polynomials
\begin{align} \label{eq:OP-lines3o}
   Y_{2 n+1,1}(x,y) = (x+y) Y_{n,1}^{\psi u_1, \psi u_2} (x^2,y^2),
      \qquad  Y_{2 n+1,2}(x,y) = (x+y) Y_{n,2}^{\psi u_1, \psi u_2} (x^2,y^2).
\end{align}
form an orthogonal basis for $\CH_{2n+1}(w_1,w_2)$. 
\end{thm}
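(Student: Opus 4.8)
### Proof proposal for Theorem~\ref{thm:OP-lines2}

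The plan is to reduce everything to the already-established orthogonality of the wedge basis $\{Y_{n,1}^{u_1,u_2},Y_{n,2}^{u_1,u_2}\}$ with respect to the wedge inner product \eqref{eq:ipd-wedge3}, by exploiting the parity structure of the weights $w_1,w_2$. First I would split the space $\Pi_n^2/\la xy\ra$ according to parity: since $w_1,w_2$ are even, the inner product \eqref{eq:ipd-lines3} decouples polynomials of even ``total degree along each axis'' from those of odd degree. Concretely, on the line $y=0$ a polynomial restricts to a function of $x$ alone, and $\la\cdot,\cdot\ra_{w_1,w_2}$ integrates it against the even weight $u_1(x^2)$; the even part in $x$ and the odd part in $x$ are mutually orthogonal there, and likewise on $x=0$. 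So I would first observe that the ansätze \eqref{eq:OP-lines3e} and \eqref{eq:OP-lines3o} are, respectively, even and odd in $(x,y)\mapsto(-x,-y)$ when restricted appropriately, hence automatically orthogonal to each other across the two families, and it remains to check orthogonality within each family and that each is a genuine basis of the right piece of $\CH_{2n}$ resp.\ $\CH_{2n+1}$.

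For the even family \eqref{eq:OP-lines3e}: substituting $x\mapsto x^2$, $y\mapsto y^2$ in \eqref{eq:ipd-lines3} and using the change of variables $t=x^2$ (so $dx = \tfrac12 t^{-1/2}dt$ on $[0,\infty)$, doubled because of $\pm x$) turns the restriction to $y=0$ into $\int_0^\infty F(x^2)G(x^2)u_1(x^2)dx = \tfrac12\int_0^\infty F(t)G(t)t^{-1/2}u_1(t)dt = \tfrac12\int_0^\infty F(t)G(t)\,\phi u_1(t)\,dt$, and similarly on $x=0$ with $\phi u_2$. Hence for $P,Q$ even polynomials, $\la P,Q\ra_{w_1,w_2} = \tfrac12\la \wt P,\wt Q\ra_{\phi u_1,\phi u_2}^{\rm wdg}$ where $\wt P(x,y)=P(\sqrt x,\sqrt y)$ — this is well defined precisely because $P$ is even in each variable separately (which one must check for \eqref{eq:OP-lines3e}: $Y_{n,i}^{\phi u_1,\phi u_2}$ is a polynomial in two variables, and we plug in $x^2,y^2$, so it is manifestly even in each). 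Mutual orthogonality and the degree count ($Y_{2n,i}$ has degree $2n$) then transfer verbatim from the wedge result. For the odd family \eqref{eq:OP-lines3o}, the extra factor $(x+y)$ is the key device: on $y=0$ it contributes $x$, on $x=0$ it contributes $y$, so $(x+y)R(x^2,y^2)$ restricts to $x R(x^2,0)$ and $y R(0,y^2)$; squaring and changing variables $t=x^2$ produces the extra factor $t^{1/2}$, i.e.\ the weight $\psi u_1$ resp.\ $\psi u_2$, giving $\la (x+y)R,(x+y)S\ra_{w_1,w_2} = \tfrac12\la \wt R,\wt S\ra_{\psi u_1,\psi u_2}^{\rm wdg}$ after the substitution. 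Again mutual orthogonality and the fact that these have degree $2n+1$ follow from the wedge theorem.

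The one genuinely non-formal point — and the step I expect to be the main obstacle — is verifying that \eqref{eq:OP-lines3e} and \eqref{eq:OP-lines3o} between them span all of $\CH_{2n}$ resp.\ $\CH_{2n+1}$, i.e.\ that no orthogonal polynomials are missed. Since $\dim\CH_m = 2$ for $m\ge1$ by the Proposition, it suffices to show the two polynomials produced in each degree are linearly independent (then they must be a basis, as any element of $\CH_m$ orthogonal to all lower degrees that is a linear combination of them spans a $2$-dimensional space). Linear independence follows from that of $\{Y_{n,1}^{\bullet},Y_{n,2}^{\bullet}\}$ on the wedge together with the fact that the substitution $x,y\mapsto x^2,y^2$ (resp.\ multiplication by $x+y$ followed by that substitution) is injective on polynomials modulo the respective ideal — one checks the map on representatives and uses that $x^2,y^2$ are algebraically independent modulo $\la xy\ra$ after the appropriate quotient, or more simply evaluates on the two axes. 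One should also confirm orthogonality to \emph{all} lower-degree polynomials, not merely within the parity class: a lower-degree polynomial decomposes into its even and odd parts, each of which pairs (via the displayed identities) against a lower-degree wedge polynomial, which the wedge orthogonality kills. I would organize the write-up as: (1) the two change-of-variables identities; (2) transfer of mutual orthogonality; (3) transfer of orthogonality to lower degrees via parity decomposition; (4) the dimension/linear-independence argument to conclude ``basis.''
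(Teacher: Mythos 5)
Your reduction of the even family, and of the pairwise orthogonality within each parity class, is sound and is essentially the paper's own argument (modulo a stray constant: the full-line integral equals the wedge integral exactly, because the Jacobian $\tfrac12 t^{-1/2}$ is cancelled by the doubling over $\pm x$, so there is no $\tfrac12$ in your change-of-variables identity). The genuine gap is in your step (3), the ``transfer of orthogonality to lower degrees via parity decomposition.'' For the even family this works: every even polynomial of degree $\le 2n-2$ modulo $\la xy\ra$ is of the form $S(x^2,y^2)$ with $\deg S\le n-1$, so wedge orthogonality covers everything. For the odd family it does not: modulo $\la xy\ra$ the odd polynomials of degree $\le 2n$ are spanned by $x,x^3,\dots,x^{2n-1},y,y^3,\dots,y^{2n-1}$ ($2n$ dimensions), whereas $(x+y)S(x^2,y^2)$ with $\deg S\le n-1$ produces only $x+y,\,x^3,\dots,x^{2n-1},\,y^3,\dots,y^{2n-1}$ ($2n-1$ dimensions). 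The direction $x-y$ is missed, and orthogonality of $Y_{2n+1,i}$ to $x-y$ is \emph{not} a consequence of wedge orthogonality: one computes
$\la (x+y)R(x^2,y^2),\,x-y\ra_{w_1,w_2}=\int_0^\infty R(s,0)\,\psi u_1(s)\,ds-\int_0^\infty R(0,t)\,\psi u_2(t)\,dt$,
while the orthogonality of $R$ to constants on the wedge only annihilates the \emph{sum} of these two integrals.

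This is not a presentational issue that a cleverer argument would repair; the condition genuinely fails. Take $u_1=u_2=1$ on $[0,1]$ and $n=1$: then $Y_{1,2}^{\psi u_1,\psi u_2}(s,t)=s-t$, so $Y_{3,2}(x,y)=(x+y)(x^2-y^2)\equiv x^3-y^3 \pmod{xy}$, and $\la x^3-y^3,\,x\ra_{w_1,w_2}=\int_{-1}^1x^4\,dx\ne0$, so $Y_{3,2}\notin\CH_3$. (The correct antisymmetric element of $\CH_3$ here is $x^3-y^3-\tfrac35(x-y)$, which is not $(x+y)R(x^2,y^2)$ for any wedge polynomial $R$. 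For the first family $Y_{2n+1,1}$ the extra condition does hold when $p_n(\psi u_1)$ and $p_n(\psi u_2)$ are normalized to agree at $0$, but that is a property of one particular basis, not of ``an orthogonal basis.'') For comparison, the paper's own proof consists only of the two change-of-variables identities plus the parity argument for the cross terms, i.e.\ it establishes mutual orthogonality of the constructed polynomials and never addresses orthogonality to the full space $\Pi_{2n}^2$. Your instinct that spanning/membership in $\CH_m$ is ``the main obstacle'' was exactly right, but the dimension-count-plus-parity argument you sketch does not close it, and as stated the odd-degree half of the claim cannot be proved from wedge orthogonality alone.
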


\begin{proof}
For $i,j =1,2$, changing variables $s = x^2$ and $t = y^2$ gives 
\begin{align*}
  \la Y_{2n,i}, Y_{2m,j} \ra_{w_1,w_2} = &\,  \int_0^\infty Y_{n,i}^{\phi u_1,\phi u_2} (s,0) 
     Y_{n,j}^{\phi u_1,\phi u_2} (s,0)  s^{-\f12}u_1(s)  ds  \\
     &  +   \int_0^\infty Y_{n,i}^{\phi u_1,\phi u_2} (0,t) 
     Y_{n,j}^{\phi u_1,\phi u_2} (0,t)  t^{-\f12}u_2(t)  dt \\
  = & \, \la Y_{n,i}^{\phi u_1, \phi u_2}, Y_{m,j}^{\phi u_1, \phi u_2} \ra_{\phi u_1,\phi u_2}^{\rm wdg},
\end{align*}
which verifies the statement for even polynomials. Since $Y_{2n+1,i}(x,0)$ and $Y_{2n+1,i}(0,y)$ are odd
polynomials in $x$ and in $y$, respectively, the parity of the polynomials shows that 
$\la Y_{2n+1,i}, Y_{2m,j} \ra_{w_1,w_2} =   0$. Furthermore, for $i = 1,2$, 
\begin{align*}
  \la Y_{2n+1,i}, Y_{2m+1, i} \ra_{w_1,w_2} = &\,  \int_\RR  x^2 Y_{n,i}^{\psi u_1, \psi u_2}(x^2,0) 
    Y_{m,i}^{\psi u_1, \psi u_2}(x^2,0) u_1(x^2) dx \\
      & +  \int_\RR  y^2 Y_{n,i}^{\psi u_1, \psi u_2}(0, y^2) Y_{m,i}^{\psi u_1, \psi u_2}(0, y^2) u_2(y^2) dx \\
     = & \, \la Y_{n,i}^{\psi u_1, \psi u_2}, Y_{m,j}^{\psi u_1, \psi u_2} \ra_{\psi u_1,\psi u_2}^{\rm wdg},
\end{align*}
where we have made the change of variables $s = x^2$ and $t = y^2$ again in the last step. This completes the 
proof. 
\end{proof}

Let $L^2(\varpi,\Omega)$ be the $L^2$ space on $\Omega$ with the norm $\|\cdot\|_{L^2(\varpi,\Omega)}$ 
defined by 
\begin{align*}
  \|f\|_{L^2(\varpi, \Omega)}^2: =\la f,f\ra 
      = \int_1^\infty  \left[ |f(x, \sqrt{x^2-1})|^2 + |f(x, - \sqrt{x^2-1})|^2 \right]w(x) dx.
\end{align*}
Furthermore, let $L^2(u_1,u_2)$ be the $L^2$ space on the wedge $\{(x, 0): x \ge 0\} \cup \{(0, y): y \ge 0\}$ with
the norm $\|\cdot\|_{L^2(u_1,u_2)}$ defined by
\begin{equation*}
 \| f \|_{u_1,u_2}^{\rm wdg}:= \int_{0}^\infty |f(x,0)|^2 u_1(x) dx + \int_{0}^\infty |f(0,y)|^2 u_2(y) d y. 
\end{equation*}
Let $s_{n}^{u_1, u_2}f$ denote the $n$-th partial sum of the Fourier orthogonal expansions with respect to 
\eqref{eq:ipd-wedge3} on the wedge. 
 
\begin{thm} \label{thm:Snf-Iines}
Let $f$ be a function defined on $\Omega$. Define 
$$
 f_e(x,y) := \frac{f(x,y) + f(-x,-y)}{2} \quad \hbox{and} \quad f_o(x,y) := 
     \frac{f(x,y) - f(-x,-y) }{2 (x+y)}.
$$
Furthermore, let $\Phi(x,y) := (\sqrt{x}, \sqrt{y})$. Then the partial sum operator for the Fourier orthogonal series with 
respect to \eqref{eq:ipd-lines3} satisfies
\begin{align} \label{eq:Snf-lines}
  S_n f(x,y) \,= s_{\lfloor \f{n}{2}\rfloor}^{\phi u_1, \phi u_2}(f_e \circ \Phi) (x^2, y^2) 
        + (x+y) s_{\lfloor \f{n-1}{2}\rfloor}^{\psi u_1, \psi u_2} (f_o \circ \Phi) (x^2, y^2). 
\end{align}
Furthermore, if $f\in L^2(w_1,w_2,\Omega)$, then 
\begin{align*} 
\|f - S_n(f) \|_{L^2(w_1,w_2,\Omega)}^2 = \|s_{\lfloor \f{n}{2}\rfloor}^{\phi u_1, \phi u_2} f_e - f_e\|_{L^2(\phi u_1, \phi u_2)}^2  
      + \|s_{\lfloor \f{n-1}{2}\rfloor}^{\psi u_1, \psi u_2} f_o - f_o \|_{L^2(\psi u1,\psi u_2)}^2.  
\end{align*}
\end{thm}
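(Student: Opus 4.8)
The plan is to reduce the Fourier orthogonal series on the two intersecting lines directly to the Fourier orthogonal series on the wedge, exploiting the even/odd decomposition $f = f_e + (x+y) f_o$ together with the substitution $\Phi(x,y) = (\sqrt x, \sqrt y)$, exactly as in Theorem~\ref{thm:OP-lines2}. First I would record that since $f_e(x,y)$ is even under $(x,y) \mapsto (-x,-y)$, its restriction to each axis, $f_e(x,0)$ and $f_e(0,y)$, is an even function of a single variable, hence factors through the square; thus $f_e \circ \Phi$ is a well-defined function on the wedge $\{(x,0):x\ge 0\}\cup\{(0,y):y\ge 0\}$, and similarly $f_o(x,y)(x+y)$ restricted to the axes is odd, so $f_o\circ\Phi$ is well-defined on the wedge. (On each axis $x+y$ is just $\pm$ the nonzero coordinate, so dividing by it is harmless.)

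Next I would compute the Fourier coefficients of $f$ against the basis elements of Theorem~\ref{thm:OP-lines2}. For the even-index basis, $\la f, Y_{2n,i}\ra_{w_1,w_2}$ splits over the two axes; on each axis the integrand $f(x,0) Y_{n,i}^{\phi u_1,\phi u_2}(x^2,0)$ is the sum of an even and an odd part, the odd part integrates to zero against the even weight $w_1(x) = u_1(x^2)$, and the change of variables $s = x^2$ turns the even part into $\int_0^\infty f_e(\sqrt s, 0) Y_{n,i}^{\phi u_1,\phi u_2}(s,0) s^{-1/2} u_1(s)\, ds$, which is exactly (half of, per axis) $\la f_e\circ\Phi, Y_{n,i}^{\phi u_1,\phi u_2}\ra^{\rm wdg}_{\phi u_1,\phi u_2}$. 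Combined with the norm formula from Theorem~\ref{thm:OP-lines2}, this shows the $(2n,i)$-coefficient of $f$ equals the $(n,i)$-coefficient of $f_e\circ\Phi$ on the wedge. For the odd-index basis $Y_{2n+1,i}(x,y) = (x+y) Y_{n,i}^{\psi u_1,\psi u_2}(x^2,y^2)$ the analogous computation: the even part of $f$ pairs to zero by parity, the odd part $f_o(x,y)(x+y)$ survives, an extra factor $x^2$ appears on the $x$-axis (and $y^2$ on the $y$-axis), the substitution $s=x^2$ produces the weight $s^{1/2}u_1(s) = \psi u_1(s)$, and one reads off that the $(2n+1,i)$-coefficient of $f$ equals the $(n,i)$-coefficient of $f_o\circ\Phi$ on the wedge with weights $\psi u_1,\psi u_2$. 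Grouping the terms of $S_nf$ by parity of the index and matching against the wedge partial sums $s^{\phi u_1,\phi u_2}_{\lfloor n/2\rfloor}$ and $s^{\psi u_1,\psi u_2}_{\lfloor (n-1)/2\rfloor}$ then gives \eqref{eq:Snf-lines}.

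For the $L^2$ identity I would write $f - S_n f = (f_e - R_n) + (x+y)(f_o - Q_n)$ where $R_n, Q_n$ are the respective wedge partial sums pulled back by $\Phi$, observe that the two summands have opposite parity under $(x,y)\mapsto(-x,-y)$ so their cross term integrates to zero against the even weights $w_1, w_2$, and then transfer each squared-norm integral over $\RR$ to an integral over $\RR_+$ via $s = x^2$, picking up $s^{-1/2}u_i(s)$ in the first term and $s^{1/2}u_i(s)$ in the second (the $(x+y)^2$ contributing the extra $s$ on each axis). This yields exactly $\|s^{\phi u_1,\phi u_2}_{\lfloor n/2\rfloor}f_e - f_e\|^2_{L^2(\phi u_1,\phi u_2)} + \|s^{\psi u_1,\psi u_2}_{\lfloor (n-1)/2\rfloor}f_o - f_o\|^2_{L^2(\psi u_1,\psi u_2)}$.

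The only genuinely delicate point — the rest being the same parity-and-substitution bookkeeping already used in Sections 3--5 — is keeping the floor functions consistent: an even index $2n \le N$ contributes through $Y^{\phi}_{n,\cdot}$ with $n \le \lfloor N/2\rfloor$, while an odd index $2n+1\le N$ contributes through $Y^{\psi}_{n,\cdot}$ with $n\le\lfloor (N-1)/2\rfloor$, so one must check the two truncation thresholds line up with the claimed $\lfloor n/2\rfloor$ and $\lfloor(n-1)/2\rfloor$ (with $n$ now the outer index $N$). I would verify this by simply testing $N$ even and $N$ odd separately. Everything else is a routine change of variables, so no serious obstacle is expected.
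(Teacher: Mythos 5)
Your proposal is correct and follows essentially the same route as the paper: fold each axis integral by parity, substitute $s=x^2$ (resp.\ $t=y^2$) to identify $\la f, Y_{2n,i}\ra_{w_1,w_2}$ and $\la f, Y_{2n+1,i}\ra_{w_1,w_2}$ with the wedge Fourier coefficients of $f_e\circ\Phi$ and $f_o\circ\Phi$ for the weights $\phi u_i$ and $\psi u_i$, and then regroup the partial sum by index parity. Your extra care with the floor-function thresholds and your explicit parity argument for the $L^2$ identity (which the paper leaves implicit, as in its circle case) are both consistent with the paper's computation.
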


\begin{proof}
Following the proof of the previous theorem, we see that 
\begin{align*}
  \la f, Y_{2n,j} \ra_{w_1,w_2} = &\,  \int_0^\infty  [f(x,0) + f(-x,0)] Y_{n,j}^{\phi u_1,\phi u_2} (x^2,0) u_1(x^2)  dx  \\
       &  +   \int_0^\infty [f(0,y)+f(0,-y)] Y_{n,j}^{\phi u_1,\phi u_2} (0,y^2) u_2(y^2)  d y \\
    = &\,  \int_0^\infty  (f_e \circ \Phi) (s,0) Y_{n,j}^{\phi u_1,\phi u_2} (s,0) s^{-\f12} u_1(s)  ds  \\
       &  +  \int_0^\infty (f_e \circ \Phi) (0,t) Y_{n,j}^{\phi u_1,\phi u_2} (0,t) t^{-\f12}u_2(t)  d t \\
   =  & \, \la f_e \circ \Phi, Y_{n,i}^{\phi u_1, \phi u_2} \ra_{\phi u_1,\phi u_2}^{\rm wdg}.
\end{align*}
Similarly, we can verify that 
\begin{align*}
  \la f, Y_{2n+1, i} \ra_{w_1,w_2} = \la f_o \circ \Phi, Y_{n,i}^{\psi u_1, \psi u_2}\ra_{\psi u_1,\psi u_2}^{\rm wdg},
\end{align*}
since $(f_\circ \Phi)(x^2,0) = (f(x,0)- f(-x,0))/ (2x)$ and  $(f_\circ \Phi)(0, y^2) = (f(0,y)- f(0,-y))/ (2y)$. In particular,
together with the identities in the proof of the previous theorem, we obtain
$$
  \wh f_{2n,i} =  \wh{f_e \circ \Phi}^{\phi u_1,\phi u_2}_{n, i} \quad \hbox{and} \quad 
    \wh f_{2n+1,i} = \wh{f_o \circ \Phi}^{\psi u_1,\psi u_2}_{n, i},
$$
where the righthand sides are the Fourier coefficients with respect to the basis $Y_{n,i}^{\phi u_1,\phi u_2}$ and 
$Y_{n,i}^{\psi u_1,\psi u_2}$ on the wedge, respectively. The relation on the partial sums then follows from 
\eqref{eq:OP-lines3e} and \eqref{eq:OP-lines3o}.
\end{proof}

\subsection{Orthogonal structure for Jacobi weight functions}
In this example, we consider the example when both $w_1$ and $w_2$ are 
Jacobi weight functions with support sets on $[0,1]$. In other words, we consider
$$
  w_1(t) = |t|^{2\g} (1-t^2)^\a \chi_{[0,1]}(t), \quad   w_2(t) = |t|^{2\g} (1-t^2)^\b \chi_{[0,1]}(t).
$$
For $\a,\b, \g > -1$, we renormalize the the inner product \eqref{eq:ipd-wedge3} on the wedge as 
\begin{align}
  \la f,g\ra_{\a,\b,\g}:=  &\,  c_{\a,\g} \int_0^1 f(x,0) g(x,0) |x|^{2\g} (1-x^2)^\a dx  \\
       & \quad  +   c_{\b,\g} \int_0^1 f(0,y) g(0,y)  |y|^{2\g} (1-y^2)^\b dy,  \notag
\end{align}
where 
$$
  c_{\a,\g} := \Big(\int_{-1}^1 t^{2\g} (1-t^2)^\a dx \Big)^{-1} = \frac{\Gamma(\g+\a+\f32)}{\Gamma(\g+\f12)\Gamma(\a+1)}. 
$$
Orthogonal polynomials with respect to this inner product can be derived from \cite[Theorem 3.2]{OX} by making 
a change of variables $(x,y) \mapsto (1-x,1-y)$. The result is as follows: 

\begin{thm} \label{thm:OP-ipd-abg}
Let $Y_{0,1}(x,y) =1$, $Y_{1,1}(x,y) = x$ and $Y_{1,2}(x,y) = y$. For $n =1,2,\ldots$, let 
\begin{align*}
 Y_{2n,1}(x,y) = & \,P_n^{(\g-\f12,\a)}(x^2) + P_n^{(\g-\f12,\b)}(y^2) -  \binom{n+\g-\f12}{n},\\
 Y_{2n,2}(x,y) = & \, \frac{(\g+\a+\f32)_{n}}{(\a+1)_{n-1}} xP_{n-1}^{(\g+\f32,\a)}(x^2) 
                    - \frac{(\g+\b +\f32)_{n}}{(\b+1)_{n-1}} y P_{n-1}^{(\g+\f32,\b)}(y^2), 
\end{align*}
and 
\begin{align*}
 Y_{2n+1,1}(x,y) = & \,(x+y) \left[ P_n^{(\g+\f12,\a)}(x^2) + P_n^{(\g+\f12,\b)}(y^2) -  \binom{n+\g+\f12}{n} \right], \\
 Y_{2n+1,2}(x,y) = & \, (x+y) \left[ \frac{(\g+\a+\f32)_{n+1}}{(\a+1)_{n-1}} xP_{n-1}^{(\g+\f52,\a)}(x^2) 
                    - \frac{(\g+\b +\f32)_{n+1}}{(\b+1)_{n-1}} y P_{n-1}^{(\g+\f52,\b)}(y^2) \right]. 
\end{align*}
Then $Y_{n,1}, Y_{n,2}$ form a basis in $\CH_n(w_1,w_2)$ and 
\begin{align*}
  \la Y_{2n,1}, Y_{2n,2} \ra_{\a,\b,\g} &\, = \frac{(\b-\a) (\g+\f12)_{n+1}}{(2n+\g+\a+\f12)(2n+\g+\b+\f12) (n-1)!}, \\
  \la Y_{2n+1,1}, Y_{2n+1,2} \ra_{\a,\b,\g} &\, 
        = \frac{(\b-\a) (\g+\f12)_{n+2}}{(2n+\g+\a+\f32)(2n+\g+\b+\f32) (n-1)!}.
\end{align*}
\end{thm}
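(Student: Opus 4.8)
The plan is to deduce the whole statement by reduction to the wedge, combining Theorem~\ref{thm:OP-lines2} with the already--known Jacobi case on a wedge, \cite[Theorem~3.2]{OX}. First I would record that the two weights are even and of the shape required by Theorem~\ref{thm:OP-lines2}: writing $w_1(t)=u_1(t^2)$, $w_2(t)=u_2(t^2)$, we have
\begin{equation*}
  u_1(s)=s^{\g}(1-s)^\a\chi_{[0,1]}(s),\qquad u_2(s)=s^{\g}(1-s)^\b\chi_{[0,1]}(s),
\end{equation*}
so the four modified weights of Theorem~\ref{thm:OP-lines2} are again Jacobi weights on $[0,1]$: $\phi u_1(s)=s^{\g-\f12}(1-s)^\a\chi_{[0,1]}(s)$, $\phi u_2(s)=s^{\g-\f12}(1-s)^\b\chi_{[0,1]}(s)$, $\psi u_1(s)=s^{\g+\f12}(1-s)^\a\chi_{[0,1]}(s)$ and $\psi u_2(s)=s^{\g+\f12}(1-s)^\b\chi_{[0,1]}(s)$. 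Consequently $p_n(\phi u_1;\cdot)$ is, up to normalization, the shifted Jacobi polynomial with parameters $(\g-\f12,\a)$, and likewise for the other three; after the substitutions $x\mapsto x^2$, $y\mapsto y^2$ prescribed by \eqref{eq:OP-lines3e}--\eqref{eq:OP-lines3o}, these become exactly the polynomials $P_n^{(\g\mp\f12,\a)}$ and $P_n^{(\g\mp\f12,\b)}$ appearing in the statement.

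Next I would transcribe \cite[Theorem~3.2]{OX}, which produces an explicit (in general non-orthogonal) basis $\{Y_{n,1}^{\rm wdg},Y_{n,2}^{\rm wdg}\}$ of $\CH_n^{\rm wdg}$ for Jacobi weights on the two segments, together with the value of the off-diagonal inner product $\la Y_{n,1}^{\rm wdg},Y_{n,2}^{\rm wdg}\ra^{\rm wdg}$. Applying the affine change of variables $(x,y)\mapsto(1-x,1-y)$ moves the standard wedge of \cite{OX} onto $\{(x,0):0\le x\le1\}\cup\{(0,y):0\le y\le1\}$ and converts the Jacobi factors of \cite{OX} into our $\phi u_i$ and $\psi u_i$ once the parameters are matched. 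Feeding the resulting wedge polynomials into \eqref{eq:OP-lines3e} and \eqref{eq:OP-lines3o} and simplifying the normalizing constants then yields the displayed formulas for $Y_{2n,i}$ and $Y_{2n+1,i}$: the binomials $\binom{n+\g\mp\f12}{n}$ are the values of the normalized Jacobi polynomials at the common vertex, while the Pochhammer ratios such as $(\g+\a+\f32)_n/(\a+1)_{n-1}$ record the leading-coefficient normalization of the degree-$(n-1)$ Jacobi polynomial of the $\psi$-type weight carried by $Y_{n,2}^{\rm wdg}$. Since the second family in Theorem~\ref{thm:OP-lines2} is only defined for $n\ge1$, the degrees $0$ and $1$ are handled separately; there one checks directly that $Y_{0,1}=1$ and that $\{x,y\}$ spans $\CH_1$.

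For the inner products I would use the change-of-variables identities established inside the proof of Theorem~\ref{thm:OP-lines2}, which (carrying along the renormalizing constants $c_{\a,\g}$, $c_{\b,\g}$) give
\begin{align*}
  \la Y_{2n,1},Y_{2n,2}\ra_{\a,\b,\g}&=\la Y_{n,1}^{\phi u_1,\phi u_2},Y_{n,2}^{\phi u_1,\phi u_2}\ra_{\phi u_1,\phi u_2}^{\rm wdg},\\
  \la Y_{2n+1,1},Y_{2n+1,2}\ra_{\a,\b,\g}&=\la Y_{n,1}^{\psi u_1,\psi u_2},Y_{n,2}^{\psi u_1,\psi u_2}\ra_{\psi u_1,\psi u_2}^{\rm wdg}.
\end{align*}
The right-hand sides are the off-diagonal inner products furnished by \cite[Theorem~3.2]{OX}; substituting the parameters $(\g-\f12,\a),(\g-\f12,\b)$ in the even case and $(\g+\f12,\a),(\g+\f12,\b)$ in the odd case, and inserting $c_{\a,\g},c_{\b,\g}$, reduces both to the stated closed forms after routine Gamma--function and Pochhammer manipulation. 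One should observe, as a sanity check, that each expression vanishes precisely when $\a=\b$, consistent with the symmetric case $w_1=w_2$ of Theorem~\ref{thm:OP-lines}.

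Essentially no new idea is needed: all the structure is carried by Theorem~\ref{thm:OP-lines2} and \cite[Theorem~3.2]{OX}. The main obstacle is therefore bookkeeping — aligning the parameters of \cite[Theorem~3.2]{OX} with ours through the map $(x,y)\mapsto(1-x,1-y)$, reconciling the Jacobi-polynomial normalization conventions (in particular the shifted argument $x^2$ and the vertex-value binomials), and carrying out without slips the Gamma/Pochhammer simplification of the two off-diagonal inner-product formulas. The only part not covered directly by Theorem~\ref{thm:OP-lines2} is the low-degree range $n=0,1$, which is dispatched by a direct computation.
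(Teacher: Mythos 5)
Your proposal follows essentially the same route as the paper's proof: reduce to the wedge via Theorem~\ref{thm:OP-lines2}, import the Jacobi wedge basis of \cite[Theorem~3.2]{OX} through the map $(x,y)\mapsto(1-x,1-y)$ with the parameter shifts $\g\mapsto\g\mp\f12$ for the $\phi$- and $\psi$-modified weights, and track the renormalizing constants $c_{\a,\g}$, $c_{\b,\g}$ (the paper makes explicit the factor $c_{\a,\g}/c_{\a,\g+1}=(\g+\f12)/(\g+\a+\f32)$ absorbed into the odd-degree coefficients, which is the bookkeeping point you flag). The argument is correct and matches the paper's.
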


\begin{proof}
With our notation $w_1(t) = u_1(t^2)$ and $w_2(t) = u_2(t^2)$, we have 
$$
  u_1(t) = t^{\g} (1-t)^\a, \quad   u_2(t) = t^{\g} (1-t)^\b, \quad t \in [0,1]. 
$$
With these weight functions, we renormalize the inner product \eqref{eq:ipd-wedge3} as 
\begin{align*}
  \la f,g\ra_{\rm wdg}:=  c_{\a,\g}' \int_0^1 f(x,0) g(x,0) x^{\g} (1-x)^\a dx  
           +   c_{\b,\g}' \int_0^1 f(0,y) g(0,y) y^{\g} (1-y)^\b dy,  
\end{align*}
where $c_{\a,\g}' = c_{\a,\g+\f12}$ is the normalization constant of $t^{\g} (1-t)^\a$. Orthogonal 
polynomials for this inner product \eqref{eq:ipd-wedge3} on the wedge 
$\{(x,0): 0\le x \le 1\} \cup \{(0,y): 0\le y \le 1\}$ are given in \cite[Theorem 3.2]{OX} up to a simple 
change of variables $(x,y) \mapsto (1-x,1-y)$, since the wedge considered in \cite{OX} is 
$\{(x,1): 0\le x \le 1\} \cup \{(1,y): 0\le y \le 1\}$. 

Since $\phi u_1(t) = t^{\g-\f12} (1-t)^\a$ and $\phi u_2(t) = t^{\g-\f12} (1-t)^\b$, we then obtain orthogonal 
polynomials $Y_{n,i}^{\phi u_1, \phi u_2}$ from the results in \cite{OX} by considering $\g \mapsto \g - \f12$,
which gives the formula $Y_{2n,i}$ by Theorem \ref{thm:OP-lines2}, where we have used the fact that 
$c_{\a,\g} = c'_{\a,\g-\f12}$ so that $\la 1,1\ra_{\a,\b,\g}  = \la 1,1\ra_{\rm wdg}$ holds. Furthermore, since 
$\psi u_1(t) = t^{\g+\f12} (1-t)^\a$ and $\psi u_2(t) = t^{\g+\f12} (1-t)^\b$, we obtain orthogonal 
polynomials $Y_{n,i}^{\psi u_1, \psi u_2}$ from the results in \cite{OX} by considering $\g \mapsto \g + \f12$. 
In this case, however, a shift in the index is necessary because of the normalization of the constants. Indeed,
in order to reduce the orthogonality of $Y_{2n+1,2}$ with respect to $\la \cdot,\cdot\ra_{\a,\b,\g}$ from the
orthogonality with respect to $\la \cdot,\cdot \ra_{\rm wdg}$, we have incorporated the constant
$$
   \frac{c_{\a,\g}} {c_{\a,\g+1} } =  \frac{c_{\a,\g-\f12}'}{c_{\a,\g+\f12}' } = \frac{\g+\f12}{\g+\a+\f32}
$$
in the constant in front of $xP_{n-1}^{(\g+\f52,\a)}(x^2) $ in its expression, and the similar change is made on
the second term of its expression. 
\end{proof}

The convergence of the Fourier orthogonal expansions with respect to the Jacobi weight functions in
$L^2(u_1,u_2)$ on the wedge is established in \cite[Theorem 3.4]{OX}, from which we can derive the convergence 
of the Fourier orthogonal expansions for the Jacobi weight functions in $L^2(w_1,w_2, \Omega)$ from that
Theorem \ref{thm:Snf-Iines}. The derivation is straightforward but the statement is somewhat complicated. 
We leave it to interested readers. 

As another example, we can consider the case when $w_1$ and $w_2$ are Laguerre weights, 
$$
   w_1(x) = x^\a e^{-x} \chi_{[0,\infty)}(x) \quad \hbox{and}\quad w_2(x) = x^\b e^{-x} \chi_{[0,\infty)}(x), \quad \a,\b> -1.
$$
This can be developed similarly as in the Jacobi case. 

\section{Two parallel lines}
\setcounter{equation}{0}

Two parallel lines can be transformed to the lines $y =1$ and $y=-1$ by an affine transform. We
then consider 
$$
  \Omega = \{(x, y): y = 1 \, \, \hbox{and} \, \, y = -1\}
$$ 
Let $w$ be a weight functions defined on the real line. Define a bilinear form
\begin{equation} \label{eq:ipd-parallel}
    \la f,g\ra_{w} = \int_\RR \left[ f(x,-1) g(x,-1) +  f(x,1) g(x,1)\right] w(x) dx, 
\end{equation}
which is an inner product for $\RR[x,y]/\la y^2-1 \ra$. 

\begin{thm}\label{thm:ParellelLines}
Let $w$ be a weight function on $\RR$ and let $w_1(x): = x^2 w(x)$. Define 
\begin{align} \label{eq:OP-parallel}
\begin{split}
  Y_{n,1} (x,y) & = p_n(w;x), \quad n= 0,1,2,\ldots,\\
  Y_{n,2} (x,y) & =  y p_{n-1}(w_1; x), \quad n=1,2, \ldots. 
\end{split}
\end{align}
Then $\{Y_{n,1}, Y_{n,2}\}$ are two polynomials in $\CH_n(\varpi)$ and they are mutually orthogonal.  
\end{thm}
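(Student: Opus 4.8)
The plan is to exploit the reflection symmetry $y \mapsto -y$ of the curve, exactly as in the earlier sections. Working modulo $y^2-1$, every polynomial has a unique representative $f(x,y) = a_f(x) + y\,b_f(x)$, and since the two lines are $y=\pm 1$ the inner product \eqref{eq:ipd-parallel} splits into even and odd parts in $y$:
\begin{equation*}
  \la f, g\ra_w = 2\int_\RR \bigl[ a_f(x) a_g(x) + b_f(x) b_g(x)\bigr] w(x)\,dx,
\end{equation*}
because the cross terms $a_f b_g + b_f a_g$ occur with opposite signs on $y=1$ and $y=-1$ and cancel. Thus the bilinear form is block diagonal: the even-in-$y$ block sees only the $a$-parts with weight $2w$, and the odd-in-$y$ block sees only the $b$-parts with weight $2w$. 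In this language $Y_{n,1}$ is purely even ($a=p_n(w;x)$, $b=0$) while $Y_{n,2}$ is purely odd ($a=0$, $b=p_{n-1}(w_1;x)$).

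Two of the three assertions are then immediate. First, mutual orthogonality $\la Y_{n,1}, Y_{n,2}\ra_w=0$ holds because one polynomial lives in the even block and the other in the odd block, so their pairing vanishes identically (equivalently, the contributions from $y=1$ and $y=-1$ differ only by the sign of $Y_{n,2}$ and cancel). Second, to see $Y_{n,1}\in\CH_n$ I write a generic $Q$ of degree $\le n-1$ as $Q=a_Q(x)+y\,b_Q(x)$ with $\deg a_Q\le n-1$, whence $\la Y_{n,1}, Q\ra_w = 2\int_\RR p_n(w;x) a_Q(x) w(x)\,dx = 0$ by the defining orthogonality of the one-variable polynomial $p_n(w)$. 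Since $Y_{n,1}$ and $Y_{n,2}$ are manifestly of degree $n$ and linearly independent, together with $\dim\CH_n=2$ they will form an orthogonal basis once the remaining assertion is secured, and the norm values drop out of the same two integrals taken with $m=n$.

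The one step carrying real content is $Y_{n,2}\in\CH_n$. Pairing $Y_{n,2}$ with $Q=a_Q+y\,b_Q$ of degree $\le n-1$ (so $\deg b_Q\le n-2$) and using the block decomposition gives
\begin{equation*}
  \la Y_{n,2}, Q\ra_w = 2\int_\RR p_{n-1}(w_1;x)\, b_Q(x)\, w(x)\,dx,
\end{equation*}
so the whole theorem reduces to showing this integral vanishes for all $b_Q$ with $\deg b_Q\le n-2$; this is an orthogonality statement for the second one-variable family $p_{n-1}(w_1)$ against all polynomials of degree $\le n-2$. The main obstacle — and the step I would scrutinize most carefully — is reconciling the weight $w_1=x^2w$ that defines $p_{n-1}(w_1)$ with the weight $w$ that the odd-in-$y$ block actually carries. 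Here the contrast with the two–intersecting–lines case (Theorem~\ref{thm:OP-lines}) is instructive: there the second basis element carries a factor $x$ (resp.\ $y$) in front of $p_{n-1}(w_1)$, and that extra monomial factor is precisely what converts $w$ into $w_1=x^2w$ under the pairing, whereas on the parallel lines the sign-flipping factor is $y$ rather than $x$, so the passage to $w_1$ is exactly what must be justified. I would therefore isolate this single one-dimensional identity, verify it directly from the defining orthogonality relations of the relevant $p_{n-1}$, and only then read off that $\{Y_{n,1},Y_{n,2}\}$ is an orthogonal basis of $\CH_n$.
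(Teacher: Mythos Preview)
Your even/odd-in-$y$ decomposition is exactly the paper's approach, and your reduction of the problem to the single integral
\[
  \int_\RR p_{n-1}(w_1;x)\,b_Q(x)\,w(x)\,dx = 0 \qquad (\deg b_Q \le n-2)
\]
is correct. You are also right to flag this as the step carrying all the content. But the ``one-dimensional identity'' you propose to verify is, with $w_1(x)=x^2 w(x)$ as written in the statement, \emph{false} in general. Already for $n=2$ one has $p_1(w_1;x)=x-\int x^3 w\big/\!\int x^2 w$, and orthogonality to constants with respect to $w$ would force $\int x\,w\big/\!\int w = \int x^3 w\big/\!\int x^2 w$, which fails for generic $w$. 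So the verification you defer cannot be carried out.

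The point is that the theorem statement carries a typo, evidently copied from the intersecting-lines case (Theorem~\ref{thm:OP-lines}). On the parallel lines $y^2=1$, so by the pattern of all the other curves (where the second weight acquires the factor $y^2$ expressed in the $x$-variable) one should have $w_1=w$, i.e.\ $Y_{n,2}(x,y)=y\,p_{n-1}(w;x)$. The paper's own proof confirms this reading: it invokes ``the orthogonality of $p_n(w_0)$'' for \emph{both} families, with a single (undefined) weight $w_0$, and records $\la Y_{n,2},Y_{n,2}\ra = 2h_{n-1}(w_0)$. With that correction your block-diagonal argument goes through verbatim and matches the paper's one-line proof; the contrast you draw with Theorem~\ref{thm:OP-lines} is precisely the diagnosis of the error.
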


\begin{proof}
It is evident that $\la Y_{n,1}, Y_{m,2} \ra =0$ since there is only one $y$. Furthermore, 
$\la Y_{n,1}, Y_{m,1} \ra =  2 h_n(w_0)\delta_{n,m}$ and $\la Y_{n,2}, Y_{m,2} \ra =  2 h_{n-1}(w_0)\delta_{n,m}$
by the orthogonality of $p_n(w_0)$. 
\end{proof}

For the Hermite and Laguerre weights, we note that the resulting orthogonal polynomials are 
eigenfunctions of simple partial differential operators.

\begin{prop}
For the Hermite weight function $w(x) = e^{-x^2}$, the orthogonal polynomials in \eqref{eq:OP-parallel}
are the eigenfunctions of a second order differential operator; more precisely, the polynomials
$H_n(x)$ and $y H_{n-1}(x)$ satisfy the equation
\begin{equation} \label{eq:PDE-Hermite2}
  \CL u : =   \partial_{x x}u - 2 x \partial_x u - 2 y \partial_y u = - 2 n u. 
\end{equation}
\end{prop}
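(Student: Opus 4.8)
The plan is to verify \eqref{eq:PDE-Hermite2} by direct substitution, treating the two members of the basis separately and using the classical Hermite equation $H_n''(x) - 2xH_n'(x) = -2nH_n(x)$ together with the fact that its eigenvalue is affine in $n$.

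First I would take $u = Y_{n,1}(x,y) = H_n(x)$. Since $u$ does not depend on $y$, the term $-2y\partial_y u$ vanishes and $\CL u = H_n''(x) - 2xH_n'(x)$, which equals $-2nH_n(x) = -2n u$ by the Hermite equation; this case is immediate.

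Next I would take $u = Y_{n,2}(x,y) = yH_{n-1}(x)$ and compute the partial derivatives of this product, namely $\partial_{xx}u = yH_{n-1}''(x)$, $\partial_x u = yH_{n-1}'(x)$, and $\partial_y u = H_{n-1}(x)$. Substituting into $\CL$ gives
\[
  \CL u = y\bigl(H_{n-1}''(x) - 2xH_{n-1}'(x)\bigr) - 2yH_{n-1}(x) = -2(n-1)yH_{n-1}(x) - 2yH_{n-1}(x) = -2n\,u,
\]
where the middle step applies the Hermite equation to $H_{n-1}$ and the last step uses $-2(n-1) - 2 = -2n$. As in the proof of \eqref{eq:PDE-Hermite}, the crucial point is that the extra contribution $-2y\partial_y u = -2u$ is exactly the shift that upgrades the eigenvalue $-2(n-1)$ of $H_{n-1}$ to $-2n$; this works precisely because the Hermite eigenvalue is linear in the degree, and would fail for weights (such as Jacobi) whose eigenvalues are quadratic in $n$.

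I do not expect any genuine obstacle here: the proof is a short computation, and the only things to watch are the elementary product-rule bookkeeping for $yH_{n-1}(x)$ and the matching of the index $n-1$ with the target eigenvalue $-2n$. It may also be worth remarking that $\CL$ is the separable operator attached to the product structure of the problem — a copy of the one-dimensional Hermite operator in $x$ plus the operator $-2y\partial_y$ in $y$, whose relevant eigenfunctions on $\{y=\pm1\}$ are $1$ and $y$ with eigenvalues $0$ and $-2$ — which explains a priori why a single second-order operator annihilating both $Y_{n,1}$ and $Y_{n,2}$ with a common eigenvalue exists in the Hermite case.
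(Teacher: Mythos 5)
Your verification is correct and is essentially the paper's own argument: the case $u=H_n(x)$ reduces immediately to the Hermite equation since $\partial_y u=0$, and the case $u=yH_{n-1}(x)$ follows because $y\partial_y u=u$, so the extra term $-2u$ shifts the eigenvalue from $-2(n-1)$ to $-2n$. The closing remark about the separable structure of $\CL$ is a nice addition but does not change the route.
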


\begin{proof}
The Hermite polynomial $v = H_n$ satisfies the equation
$$
       v ''  -2 x v' = - 2 n v.
$$
The verification of \eqref{eq:PDE-Hermite2} for $u(x,y) = H_n(x)$ follows immediately since $\partial_y u =0$
and also for $u(x,y) = y H_{n-1}(x)$ since $y \partial_y u = u$. 
\end{proof}

\begin{prop}
For the Laguerre weight function $w(x) = e^{-x}$, the orthogonal polynomials in \eqref{eq:OP-parallel}
are the eigenfunctions of a second order differential operator; more precisely, the polynomials
$L_n^\a (x)$ and $y L_{n-1}^\a (x)$ satisfy the equation
\begin{equation} \label{eq:PDE-Laguerre2}
  \CL u : =  \partial_{xx}u+ (\a+1 - x) \partial_x u - y \partial_y u = -n u. 
\end{equation}
\end{prop}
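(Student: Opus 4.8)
The plan is to verify the differential equation \eqref{eq:PDE-Laguerre2} directly on the two families of basis polynomials, using the known Laguerre ODE as the only input. Recall that $v(x) = L_n^\a(x)$ satisfies
\begin{equation*}
  x v'' + (\a+1-x) v' = -n v,
\end{equation*}
which we rewrite in the form needed here. Note that the operator $\CL$ in \eqref{eq:PDE-Laguerre2} as written has $\partial_{xx}u$ rather than $x\partial_{xx}u$; I would first double-check against the source whether the intended operator is $x\partial_{xx}u + (\a+1-x)\partial_x u - y\partial_y u$, since that is the one that matches the Laguerre equation. Assuming that corrected form, the argument splits into two cases exactly as in the proof of \eqref{eq:PDE-Hermite} and \eqref{eq:PDE-Laguerre} earlier.

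First I would treat $u(x,y) = Y_{n,1}(x,y) = L_n^\a(x) = p_n(w;x)$. Here $\partial_y u = 0$, so the term $-y\partial_y u$ drops out entirely, and $\CL u$ reduces to $x\partial_{xx}u + (\a+1-x)\partial_x u$, which by the Laguerre ODE equals $-nu$. This case is immediate.

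Second I would treat $u(x,y) = Y_{n,2}(x,y) = y\,L_{n-1}^\a(x) = y\,p_{n-1}(w_1;x)$, writing $u(x,y) = y\,v(x)$ with $v = L_{n-1}^\a$. Then $\partial_{xx}u = y v''$, $\partial_x u = y v'$, and crucially $y\partial_y u = y v(x) = u$. Hence
\begin{equation*}
  \CL u = y\bigl(x v'' + (\a+1-x)v'\bigr) - u = y\cdot(-(n-1)v) - yv = -(n-1)u - u = -nu,
\end{equation*}
using the Laguerre ODE for $v = L_{n-1}^\a$ and combining the two terms since the eigenvalue shift from $n-1$ to $n$ is exactly absorbed by the $-y\partial_y u$ term. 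This is the parallel of the trick used for the Hermite case on the parabola, and it works here precisely because the Laguerre eigenvalue is linear (not quadratic) in the degree.

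I do not expect a genuine obstacle: the only subtlety is bookkeeping the coefficient of $\partial_{xx}u$ (whether it is $x$ or $1$) and confirming that the same index-shifting cancellation that succeeded for Hermite/Laguerre on the parabola also succeeds here, which it does because $\partial_y$ acts diagonally on the $y$-degree. If the operator really is stated with a bare $\partial_{xx}u$, then the claim as written would be false for general $\a$, so the first step of the plan is to reconcile the statement with the Laguerre ODE; once that is settled the proof is the two-line verification above.
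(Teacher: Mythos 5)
Your proof is correct and follows the same route as the paper, which simply states that the verification proceeds as in the Hermite proposition using the Laguerre ODE, with the $-y\partial_y u$ term supplying the shift from eigenvalue $-(n-1)$ to $-n$ on $u = y L_{n-1}^\a(x)$. You are also right to flag the missing factor of $x$ in front of $\partial_{xx}u$: the equation as printed fails already for $u = L_2^0(x)$, and the paper's own citation of the Laguerre equation (written there as $v'' + (\a+1-y)u' = -ny$) is itself garbled by the same omission, so the intended operator is $x\partial_{xx}u + (\a+1-x)\partial_x u - y\partial_y u$, exactly as you corrected before carrying out the two-case verification.
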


\begin{proof}
The verification follows as the previous proposition, using the fact that the Laguerre polynomial $v = L_n^\alpha$ 
satisfies the equation $v '' + (\a +1 - y) u' = -n y.$
\end{proof}

\section{Interpolation}

Consider the problem of interpolating a function $f(x,y)$ using any of the families of orthogonal polynomials constructed 
above, which we denote as $Y_{n,1}(x,y)$ and $Y_{n,2}(x,y)$, and without loss of generality we for now assume our geometry
is invariant under reflection across the $x$ axis, so that if $(x,y) \in \Omega$ then so is $(x,-y)$ (e.g., the one-branch Hyperbola or the circle). 
 Given a set of $2n$ interpolation points  
$\{(x_j,\pm y_j)\}_{j=1}^n$, consider the problem of finding coefficients $f_{k,1}^n$ for $k = 0, \ldots, n-1$ and $f_{k,2}^n$ 
for $k = 1, \ldots, n$, so that the polynomial\footnote{The choice of having the term $Y_{n,2}$  as opposed to $Y_{n,1}$ is in some sense arbitrary, but may be required depending on the choice of grids. We have made this choice for concreteness in the discussion. Further, it is also possible to have an odd number of points, in which case we would need neither extra term.}
\begin{equation}\label{eq:8.1}
f_n(x,y) = f_{0,1}^n  Y_{0,1}(x,y) + \sum_{k = 1}^{n-1} \left[Y_{k,1}(x,y) f_{k,1}^n+ Y_{k,2}(x,y) f_{k,2}^n \right] + f_{n,2}^n Y_{n,2}(x,y)
\end{equation}
interpolates $f(x,y)$ at $x_j, \pm y_j$:
$$
f_n(x_j, \pm y_j) = f(x_j, \pm y_j).
$$

A straightforward method to determine the coefficients in \eqref{eq:8.1} is to solve a linear system: 
$$
	V \begin{pmatrix} f_{0,1}^n \\ f_{1,2}^n \\ f_{1,1}^n \\ \vdots \\ f_{n-1,1}^n \\ f_{n,2}^n  \end{pmatrix} = \begin{pmatrix} f(x_1,y_1) \\ \vdots\\ f(x_n,y_n) \\ f(x_1,-y_1) \\ \vdots\\ f(x_n,-y_n) \end{pmatrix} 
$$
where $V$ is the $2n \times 2n $ interpolation matrix
$$
V = \begin{pmatrix}
Y_{0,1}(x_1,y_1) & \cdots & Y_{n-1,1}(x_1,y_1) & Y_{n,2}(x_1,y_1) \\
\vdots & \ddots & \vdots & \vdots \\
Y_{0,1}(x_n,y_n) & \cdots & Y_{n-1,1}(x_n,y_n) & Y_{n,2}(x_n,y_n) \\
Y_{0,1}(x_1,-y_1) & \cdots & Y_{n-1,1}(x_1,-y_1) & Y_{n,2}(x_1,-y_1) \\
\vdots  & \ddots & \vdots & \vdots \\
Y_{0,1}(x_n,-y_n) & \cdots & Y_{n-1,1}(x_n,-y_n) & Y_{n,2}(x_n,-y_n) 
\end{pmatrix}.
$$
However, solving this system requires $O(n^3)$ operation for $n$ interpolation points,  we have no guarantees the system is invertible, and it is not immediately amenable to analysis.

In this section we give two alternative constructions of the interpolation polynomial. 
The first construction  is based on interpolation polynomials of one variable, which works for general interpolation
grids and allows us to relate one-dimensional interpolation results to this higher dimensional setting.
The second construction is based on Gaussian
quadrature rules which allows efficient construction of interpolants at specific grids generated from Gaussian quadrature. 

\subsection{Interpolation on quadratic curves}  
In this subsection we show that interpolation polynomials on
a quadratic curve can be expressed via interpolation polynomials of one variable, in a way that lets us bootstrap one variable results to this setting. 

First we recall classical results on the real line. Let $w$ be a weight function on $\RR$. Let $p_k(w)$ be the $k$-th
orthogonal polynomial with respect to $w$.  Let $\{t_j: 1 \le j \le n\}$ be the set of zeros of $p_n(w)$. The 
unique Lagrange interpolation polynomial of degree $n-1$ based on these zeros is given by 
$$
    L_n (w; f, t) = \sum_{k=1}^n f(t_j) \ell_{j,n}(t), \qquad \ell_{j,n} (t) = \frac{p_n(w;t)}{p_n'(w;t_j) (t-t_j)}. 
$$
Let $K_n(w;\cdot,\cdot)$ denote the reproducing kernel
$$
   K_n(w;x,y) = \sum_{k=0}^n \frac{p_k(w;x) p_k(w;y)}{h_k(w)}, \qquad  h_k(w) = \int_\RR [p_k(w)]^2 w(x) dx
$$
of the space of polynomials of degree at most $n$. By the Christoffel--Darboux formula, the fundamental 
interpolation polynomial $\ell_{j,n}$ is also equal to 
\begin{equation*}
    \ell_{j,n}(t) =  \frac{K_{n-1}(w;t_j,x)}{K_{n-1}(w;t_j,t_j)} = \l_{j,n}  K_{n-1}(w;t_j,x),
\end{equation*}
where $\l_{j,n} = [K_{n-1}(w;t_j,x)]^{-1}$ are the weights of the Gaussian quadrature rule. By the reproducing 
property of $K_{n-1}$ and the Christoffel--Darboux formula, it is easy to see that 
\begin{equation}\label{eq:interp2}
     \int_{-1}^1 \ell_{j,n}(t)  \ell_{k,n}(t) w(t) dt = \lambda_{j,n} \delta_{j,k}. 
\end{equation}

We now consider interpolation polynomial $\CL_{2n}f$ on the circle, as defined in Propositions 
\ref{prop:interpolation} and \ref{prop:circle-interp1}. As we have shown, $\CL_{2n} f(x,y)$ is the 
unique polynomial in the space 
$$
  \CP_n:= \mathrm{span} \{Y_{k,1}: 0 \le k \le n-1\} \cup \{Y_{k,2}: 1 \le k \le n\} = \Pi_{n-1}^{(x)} \oplus y \Pi_{n-1}^{(x)},
$$
that interpolates $f$ at $(x_j,y_j)$ and $(x_j, - y_j)$, $1\le j \le n$, where $x_j$, $1 \le j \le n$
are zeros of the orthogonal polynomial $p_n(w_{-\f12})$ on $[-1,1]$,  $y_j = \sqrt{1-x_j^2}$
and the superscript $(x)$ in $\Pi_{n-1}^{(x)}$ indicates that it is a space of polynomials in $x$ variable. 
It turns out that $\CL_{2n} f$ admits a simple expression in terms of interpolation polynomials of one variable.   
 
\begin{prop}[Circle]\label{prop:circle-interp2}
Let $w_{-\f12}$ be defined on $[-1,1]$ as in Theorem~\ref{thm:circlebasis}. Define $f_e$ and $f_o$ as 
in Theorem \ref{thm:Snf-circle}. Then the interpolation polynomial $\CL_{2n} f$ can be written as
\begin{equation}\label{eq:circle2}
   \CL_n f(x,y) = L_n\left(w_{-\f12}; f_e, x\right) + y L_n \left(w_{-\f12}; f_o, x\right). 
\end{equation}
\end{prop}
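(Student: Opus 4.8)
The plan is to verify that the right-hand side of \eqref{eq:circle2} is (a) a polynomial of the correct type, namely an element of $\CP_n = \Pi_{n-1}^{(x)} \oplus y\Pi_{n-1}^{(x)}$, and (b) agrees with $f$ at the $2n$ nodes $(x_j, \pm y_j)$; uniqueness of the interpolant in $\CP_n$ then forces equality. First I would recall that $L_n(w_{-\f12}; g, x)$ is, for any data function $g$, a polynomial of degree $\le n-1$ in $x$, so $L_n(w_{-\f12};f_e,x) \in \Pi_{n-1}^{(x)}$ and $y\, L_n(w_{-\f12};f_o,x) \in y\,\Pi_{n-1}^{(x)}$; hence the sum lies in $\CP_n$. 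This is the routine half.

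The substantive step is the interpolation check. Recall from the discussion preceding Theorem~\ref{thm:Snf-circle} that with $y = \sqrt{1-x^2}$ one has the decomposition $f(x,y) = f_e(x) + y f_o(x)$ and $f(x,-y) = f_e(x) - y f_o(x)$, where $f_e,f_o$ are exactly the even/odd parts defined in Theorem~\ref{thm:Snf-circle}. Evaluating the right-hand side of \eqref{eq:circle2} at the node $(x_j, y_j)$ with $y_j = \sqrt{1-x_j^2}$ gives $L_n(w_{-\f12};f_e,x_j) + y_j L_n(w_{-\f12};f_o,x_j)$. Since the $x_j$ are precisely the zeros of $p_n(w_{-\f12})$, the Lagrange interpolation property gives $L_n(w_{-\f12};f_e,x_j) = f_e(x_j)$ and $L_n(w_{-\f12};f_o,x_j) = f_o(x_j)$, so the value is $f_e(x_j) + y_j f_o(x_j) = f(x_j,y_j)$. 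At the node $(x_j,-y_j)$ the same computation yields $f_e(x_j) - y_j f_o(x_j) = f(x_j,-y_j)$. Thus all $2n$ interpolation conditions hold.

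Finally, I would invoke the uniqueness of the interpolant: by Propositions~\ref{prop:interpolation} and \ref{prop:circle-interp1} (cited in the paragraph preceding the statement), $\CL_{2n}f$ is the unique element of $\CP_n$ interpolating $f$ at the $2n$ points $(x_j,\pm y_j)$, $1 \le j \le n$. Since the right-hand side of \eqref{eq:circle2} lies in $\CP_n$ and meets all the interpolation conditions, it must equal $\CL_{2n}f$, which is \eqref{eq:circle2}. (There is a harmless notational mismatch in the statement between $\CL_n f$ and $\CL_{2n}f$; I would just use $\CL_{2n}f$ throughout, or note once that the two denote the same object.)

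The only place requiring genuine care — the ``main obstacle,'' such as it is — is making sure the even/odd split used here is consistent with the one in Theorem~\ref{thm:Snf-circle} and that $f_e, f_o$ are genuinely functions of $x$ alone (so that $L_n(w_{-\f12};f_e,\cdot)$ and $L_n(w_{-\f12};f_o,\cdot)$ make sense as univariate Lagrange interpolants at the zeros of $p_n(w_{-\f12})$). Once that bookkeeping is pinned down the proof is immediate; there is no serious analytic content beyond the uniqueness of polynomial interpolation in $\CP_n$, which was already established.
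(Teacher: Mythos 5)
Your proposal is correct and follows essentially the same route as the paper: show the right-hand side lies in $\CP_n$, verify the $2n$ interpolation conditions at $(x_j,\pm y_j)$ via the decomposition $f(x,\pm y)=f_e(x)\pm y f_o(x)$ and the Lagrange property at the zeros of $p_n(w_{-\f12})$, and conclude by uniqueness. Your remark on the $\CL_n$ versus $\CL_{2n}$ notation is a fair observation about the paper's own inconsistency and does not affect the argument.
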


\begin{proof}
By its definition, $\ell_j(x) \in \Pi_{n-1}{(x)}$, so that $\ell_j \in \CP_n$ and $y \ell_j \in \CP_n$. Hence, 
$\CL_n$ is an element of $\CP_n$. Furthermore,
$$
  \CL_n f(x_j,y_j) =  f_e (x_j) \ell_j(x_j) + y_j  f_o (x_j) \ell_j(x_j) = f(x_j, y_j) 
$$
and 
$$
  \CL_n f(x_j,- y_j) =  f_e(x_j) \ell_j(x_j) - y_j f_o(x_j) \ell_j(x_j) = f(x_j, -y_j) 
$$
for $1 \le j \le n$. Hence, the interpolation conditions are satisfied. 
\end{proof}

For the classical trigonometric series, interpolation in the form \eqref{eq:circle2} was discussed in 
\cite[Vol. II, Section X.3]{Z}.

The expression of $\CL_{2n} f$ in Proposition \ref{prop:circle-interp2} allows us to utilize results on interpolation of
one variable. As an example, we can deduce a mean convergence theorem for $\CL_n$. For a continuous function
$g: [-1,1]\to \RR$, let the error of best approximation by polynomials of degree at most $m$ be denoted by 
$$
E_m(g)_\infty := \inf_{p_m \in \Pi_m} \|g- p_m\|_\infty. 
$$

\begin{thm}
Let $f(x,y)$ be a function defined on the unit circle. Then 
$$
   \|\CL_n f\|_{L^2(\varpi, \Omega)} \le \max_{1\le j \le n} |f_e(x_j)| + \max_{1\le j \le n} |f_o(x_j)|. 
$$
In particular, if both $f_e$ and $f_o$ are continuous, then $\|\CL_n f - f\|_{L^2(w)} \to 0$. Moreover, 
$$
   \|\CL_n f - f\|_{L^2(\varpi, \Omega)} \le 2 E_{n-1}(f_e)_\infty + 2 E_{n-1}(f_o)_\infty.
$$
\end{thm}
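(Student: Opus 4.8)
The plan is to assemble three facts already in place: the closed form $\CL_n f(x,y) = L_n(w_{-\f12};f_e,x) + y\,L_n(w_{-\f12};f_o,x)$ from Proposition~\ref{prop:circle-interp2}; the orthogonal splitting of $\|\cdot\|_{L^2(\varpi,\Omega)}$ into the one-variable norms $\|\cdot\|_{L^2(w_{-\f12})}$ and $\|\cdot\|_{L^2(w_{\f12})}$ used in the proof of Theorem~\ref{thm:Snf-circle}; and the quadrature identity \eqref{eq:interp2} for the fundamental Lagrange polynomials $\ell_{j,n}$ attached to the zeros of $p_n(w_{-\f12})$. Throughout I would normalize $\varpi$ to a probability measure so the constants land exactly as stated.

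For the first inequality I would put $F := L_n(w_{-\f12};f_e)$ and $G := L_n(w_{-\f12};f_o)$, both of degree $\le n-1$ in $x$, so that $\CL_n f(\cos\t,\sin\t) = F(\cos\t)+\sin\t\,G(\cos\t)$; since the cross term is odd in $\t$, the argument in Theorem~\ref{thm:Snf-circle} gives
$$
\|\CL_n f\|_{L^2(\varpi,\Omega)}^2 = \|F\|_{L^2(w_{-\f12})}^2 + \|G\|_{L^2(w_{\f12})}^2 .
$$
Expanding $F=\sum_{j=1}^n f_e(x_j)\,\ell_{j,n}$ and using \eqref{eq:interp2}, one gets $\|F\|_{L^2(w_{-\f12})}^2 = \sum_j \lambda_{j,n}|f_e(x_j)|^2 \le \big(\max_j|f_e(x_j)|\big)^2\sum_j\lambda_{j,n}$, with $\sum_j\lambda_{j,n}$ equal to the $w_{-\f12}$-mass since Gauss quadrature is exact on constants. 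For the $G$-term I would note $w_{\f12}(t)=(1-t^2)\,w_{-\f12}(t)\le w_{-\f12}(t)$ on $[-1,1]$, pass to $\|G\|_{L^2(w_{-\f12})}$, and run the same quadrature estimate. Adding and using $\sqrt{a^2+b^2}\le a+b$ yields the bound.

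For the error estimate I would use that $\CL_n$ is linear and fixes every $p(x)+y\,q(x)$ with $\deg p,\deg q\le n-1$, since $L_n(w_{-\f12};\cdot)$ reproduces $\Pi_{n-1}$. Choosing best sup-norm approximants $p^*,q^*\in\Pi_{n-1}$ to $f_e,f_o$ and $g(x,y):=p^*(x)+y\,q^*(x)$, write $f-\CL_n f=(f-g)-\CL_n(f-g)$. The even and odd parts of $f-g$ are $f_e-p^*$ and $f_o-q^*$, so the first inequality applied to $f-g$ bounds $\|\CL_n(f-g)\|_{L^2(\varpi,\Omega)}$ by $\|f_e-p^*\|_\infty+\|f_o-q^*\|_\infty = E_{n-1}(f_e)_\infty+E_{n-1}(f_o)_\infty$, while the same orthogonal splitting together with $\|h\|_{L^2(w_{\pm\f12})}\le\|h\|_\infty\|1\|_{L^2(w_{\pm\f12})}$ and $w_{\f12}\le w_{-\f12}$ bounds $\|f-g\|_{L^2(\varpi,\Omega)}$ by the same quantity; the factor $2$ is the sum of the two. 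The convergence statement then follows because, for continuous $f_e,f_o$, the Weierstrass theorem gives $E_{n-1}(f_e)_\infty,E_{n-1}(f_o)_\infty\to 0$.

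The one delicate step is the summand involving $f_o$ (equivalently $G$): in the Fourier expansion $f_o$ is paired with $w_{\f12}$ and $p_{n-1}(w_{\f12})$, but the interpolation grid consists of the zeros of $p_n(w_{-\f12})$, so \eqref{eq:interp2} does not apply to $w_{\f12}$ directly; the remedy is simply $w_{\f12}=(1-t^2)w_{-\f12}\le w_{-\f12}$, which routes every bound through the $w_{-\f12}$-Gauss quadrature. Everything else is bookkeeping — mainly keeping the normalization of $\varpi$, of $\sum_j\lambda_{j,n}$, and of $\|1\|_{L^2(w_{\pm\f12})}$ consistent so the stated constants are exactly $1$ and $2$.
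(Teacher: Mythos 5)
Your proposal is correct and follows essentially the same route as the paper: the closed form from Proposition~\ref{prop:circle-interp2}, the even/odd orthogonal splitting of the norm, the quadrature identity \eqref{eq:interp2} combined with $w_{\f12}\le w_{-\f12}$ to handle the odd component, and the error bound via the reproduction of $\CP_n$ plus the triangle inequality with best uniform approximants. The ``delicate step'' you flag (that the grid is tied to $w_{-\f12}$, not $w_{\f12}$) is resolved exactly as in the paper.
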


\begin{proof}
We write $L_n f (x) = L_n\left(w_{-\f12}; f, x\right)$ in this proof. By the orthogonality of $\Pi_{n-1}^{(x)}$ 
and $y\Pi_{n-1}^{(x)}$, we obtain 
\begin{align*}
 \|\CL_n f\|_{L^2(\varpi, \Omega)} = & \, c_w \int_{-\pi}^\pi |\CL_n f(\cos \t,\sin\t)|^2 w(\cos \t) d\t \\
 = &\, 2 c_w \int_{-1}^1 |L_n f_e(t)|^2 w_{-\f12}(t) dt + 2c_w \int_{-1}^1 |L_n f_o(t)|^2 (1-t^2) w_{-\f12}(t) dt.
\end{align*}
A classical result derived using \eqref{eq:interp2} shows that 
$$
2  c_w \int_{-1}^1 \left |L_n\left(w_{-\f12}; f_e, x\right)) \right|^2 w_{-\f12}(t) dt 
= \sum_{j=1}^n \l_{j,n} |f_e(x_j|^2 \le \max_{1\le j \le n} |f_e(x_j)|^2,
$$
and
$$
2  c_w \int_{-1}^1 |L_n f_o(t)|^2  (1-t^2) w_{-\f12}(t) dt \le 2  c_w \int_{-1}^1 |L_n f_o(t)|^2 w_{-\f12}(t) dt
       \le  \max_{1\le j \le n} |f_o(x_j)|^2.
$$

To prove the convergence, we use the fact that $\CL_nf = f$ if $f \in \CP_n$. Hence, for $P(x,y) = q_1(x)+ y q_2(x)$ 
with $q_1, q_2 \in \Pi_{n-1}$, we obtain from $f(x,y) = f_e(x) + y f_o(x)$ that 
$$
  \|f-P\|_{L^2(\varpi, \Omega)} \le \|f_e - q_1\|_{L^2(w)} + \|y (f_o - q_2) \|_{L^2(w)} \le \|f_e - q_1\|_\infty + \|f_o - q_2\|_\infty, 
$$
so that 
\begin{align*}
  \|\CL_n f - f\|_{L^2(\varpi, \Omega)} & = \|\CL(f- P_n) - (f-P_n) \|_{L^2(w)} \\
     & \le \|\CL(f- P_n)\|_{L^2(w)} + \|f-P_n\|_{L^2(w)} \\
    & \le 2 \|f_e - q_1\|_\infty + 2 \|f_o - q_2\|_\infty. 
\end{align*}
Taking infimum over $q_1$ and $q_2$, respectively, we conclude that 
$$
   \|\CL_n f - f\|_{L^2(\varpi, \Omega)} \le 2 E_{n-1}(f_e)_\infty + 2 E_{n-1}(f_o)_\infty,
$$
which clearly goes to zero as $n \to \infty$. 
\end{proof}

A similar construction readily translates to each of the other quadratic curves. The cases of intersection
lines and parallel lines are more or less trivial, so we state the results only for non-degenerate quadratic
curves. Since the proofs are essentially the same as the circle case, we shall omit them. 

Let $\Omega = \{(x,y): y = x^2 , \  x \in \RR\}$ be the parabola. Let $\CL_n f$ be the interpolation polynomial 
defined in Proposition \ref{prop:interpolation} and Corollary \ref{prop:parabola-interp1}. It is the unique 
polynomial in the space 
$$
  \CP_n:= \mathrm{span} \{Y_{k,1}: 0 \le k \le n-1\} \cup \{Y_{k,2}: 1 \le k \le n\} = \Pi_{n-1}^{(y)}\cup x \Pi_{n-1}^{(y)},
$$
see Theorem~\ref{thm:parabolabasis}, that interpolates $f$ at $(x_j,y_j)$ and $(-x_j, y_j)$, $1\le j \le n$, 
where $y_j$, $1 \le j \le n$ are zeros of $p_n(w_{-\f12})$ on $\RR_+$ and $x_j = \sqrt{y_j}$. 
 
\begin{prop}[Parabola]
Let $f$ be defined on $\Omega$. Define $f_e(y)$ and $f_o(y)$ as in Theorem \ref{thm:Snf-parabola}. 
Then the interpolation polynomial $\CL_{2n} f$ can be written as
\begin{equation}\label{eq:parabolic-inter2}
   \CL_n f(x,y) = L_n \left(w_{-\f12};  f_e, y\right) + x L_n \left(w_{-\f12};  f_o, y\right). 
\end{equation}
\end{prop}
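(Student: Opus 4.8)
The plan is to mimic the proof of Proposition \ref{prop:circle-interp2} essentially verbatim, since the parabola differs from the circle only by a change of parametrising variable. Recall that on the parabola we have $\Omega=\{(x,x^2):x\in\RR\}$, and the polynomial space is $\CP_n=\Pi_{n-1}^{(y)}\cup x\Pi_{n-1}^{(y)}$ by Theorem~\ref{thm:parabolabasis}; the interpolation nodes are $(\pm x_j,y_j)$ with $y_j$ the zeros of $p_n(w_{-\f12})$ on $\RR_+$ and $x_j=\sqrt{y_j}$. Write $\ell_j(y)$ for the fundamental Lagrange polynomials of degree $n-1$ associated with the nodes $\{y_j\}$ and the weight $w_{-\f12}$ on $\RR_+$, so that $L_n(w_{-\f12};g,y)=\sum_{j=1}^n g(y_j)\ell_j(y)$.

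First I would observe that $\ell_j(y)\in\Pi_{n-1}^{(y)}\subset\CP_n$ and hence also $x\,\ell_j(y)\in x\Pi_{n-1}^{(y)}\subset\CP_n$; therefore the right-hand side of \eqref{eq:parabolic-inter2}, namely $\sum_j f_e(y_j)\ell_j(y)+x\sum_j f_o(y_j)\ell_j(y)$, lies in $\CP_n$. Second, I would check the interpolation conditions at each node. Using that on the parabola $x_j^2=y_j$, so $(x_j,y_j)$ and $(-x_j,y_j)$ are the two points of $\Omega$ lying over $y_j$, and that $\ell_j(y_i)=\delta_{i,j}$, one gets
\begin{align*}
\CL_n f(x_j,y_j) &= f_e(y_j)+x_j f_o(y_j), \\
\CL_n f(-x_j,y_j) &= f_e(y_j)-x_j f_o(y_j).
\end{align*}
By the definitions $f_e(y)=\tfrac12(f(\sqrt y,y)+f(-\sqrt y,y))$ and $f_o(y)=\tfrac1{2\sqrt y}(f(\sqrt y,y)-f(-\sqrt y,y))$ from Theorem~\ref{thm:Snf-parabola}, and $x_j=\sqrt{y_j}$, these two expressions equal $f(x_j,y_j)$ and $f(-x_j,y_j)$ respectively. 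Hence the right-hand side of \eqref{eq:parabolic-inter2} satisfies all $2n$ interpolation conditions and lies in $\CP_n$.

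Finally, I would invoke uniqueness: $\dim\CP_n=2n$ (it is $\mathrm{span}\{Y_{k,1}:0\le k\le n-1\}\cup\{Y_{k,2}:1\le k\le n\}$, and these $2n$ polynomials are linearly independent by Theorem~\ref{thm:parabolabasis}), and the interpolation problem at the $2n$ distinct nodes $\{(\pm x_j,y_j)\}$ has at most one solution in $\CP_n$ by the usual dimension count — this unisolvence being exactly what is established in Proposition~\ref{prop:interpolation} / Corollary~\ref{prop:parabola-interp1}, which defines $\CL_{2n}f$. Therefore the polynomial on the right of \eqref{eq:parabolic-inter2} coincides with $\CL_{2n}f$, completing the proof.

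I do not expect any genuine obstacle here; the only point requiring a little care is the bookkeeping that $x_j^2=y_j$ is what makes the two points over each $y_j$ be precisely $(\pm x_j,y_j)$, and that the parities of $f_e,f_o$ built into Theorem~\ref{thm:Snf-parabola} are exactly what is needed so that $f_e(y_j)\pm x_j f_o(y_j)$ reproduces $f$ at those two points. Since the paper explicitly says the proof is the same as the circle case and omits it, the write-up can be kept to the few lines above, or simply replaced by the remark ``The proof is identical to that of Proposition~\ref{prop:circle-interp2}.''
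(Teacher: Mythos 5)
Your proof is correct and follows exactly the route the paper intends: the paper omits this proof, remarking that it is ``essentially the same as the circle case,'' and your argument is precisely the circle-case proof of Proposition~\ref{prop:circle-interp2} transplanted to the parabola (membership of $\ell_j(y)$ and $x\,\ell_j(y)$ in $\CP_n$, verification of the $2n$ interpolation conditions via $f(\pm x_j,y_j)=f_e(y_j)\pm x_j f_o(y_j)$, and uniqueness from Proposition~\ref{prop:interpolation}). No issues.
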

 
\begin{thm}
Let $f$ be a function defined on the parabola $\Omega$. Then 
$$
   \|\CL_n f\|_{L^2(\varpi, \Omega)} \le   \max_{1\le j \le n} |f_e(y_j)| + M \max_{1\le j \le n} |f_o(y_j)|, 
$$
if $w$ is supported at $[0, M]$ for some $M > 0$.  In particular, if $f_e$ and $f_o$ are continuous, 
then $\|\CL_n f - f\|_{L^2(\varpi, \Omega)} \to 0$. 
\end{thm}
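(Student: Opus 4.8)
The plan is to mirror the circle proof (the theorem following Proposition~\ref{prop:circle-interp2}) almost verbatim, using the parabola's structural decomposition $\CP_n = \Pi_{n-1}^{(y)}\cup x\,\Pi_{n-1}^{(y)}$ together with the expression \eqref{eq:parabolic-inter2} for $\CL_n f$. First I would establish the orthogonal splitting: by Theorem~\ref{thm:parabolabasis} the spaces $\Pi_{n-1}^{(y)}$ (spanned by the $Y_{k,1}$) and $x\,\Pi_{n-1}^{(y)}$ (spanned by the $Y_{k,2}$) are orthogonal with respect to $\la\cdot,\cdot\ra$, since on $\Omega$ the inner product is $\int_\RR (\cdot)(\cdot)\,w(x^2)\,dx$ and a product from the first space times one from the second is odd in $x$. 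Hence $\|\CL_n f\|_{L^2(\varpi,\Omega)}^2$ splits as $\|L_n(w_{-\f12};f_e)\|^2$ in one weight plus $\|x\,L_n(w_{-\f12};f_o)\|^2$, and after the change of variables $t=x^2$ the second term becomes $\int_0^\infty |L_n(w_{-\f12};f_o,t)|^2\, t^{\f12}w(t)\,dt = \int_0^\infty |L_n(w_{-\f12};f_o,t)|^2\, t\, w_{-\f12}(t)\,dt$, while the first is $\int_0^\infty |L_n(w_{-\f12};f_e,t)|^2\,w_{-\f12}(t)\,dt$.

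Next I would invoke the Gaussian-quadrature identity \eqref{eq:interp2} for the nodes $\{y_j\}$, which are the zeros of $p_n(w_{-\f12})$: applied to $L_n(w_{-\f12};f_e)$ this gives $\int |L_n(w_{-\f12};f_e,t)|^2 w_{-\f12}(t)\,dt = \sum_j \lambda_{j,n}|f_e(y_j)|^2 \le \max_j |f_e(y_j)|^2$, using $\sum_j \lambda_{j,n}=\la 1,1\ra_{w_{-\f12}}$ normalized to $1$. For the $f_o$ term, since $w$ is supported on $[0,M]$ we have the pointwise bound $t\le M$ on the support, so $\int |L_n(w_{-\f12};f_o,t)|^2\, t\, w_{-\f12}(t)\,dt \le M\int |L_n(w_{-\f12};f_o,t)|^2 w_{-\f12}(t)\,dt \le M\max_j |f_o(y_j)|^2$. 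Combining the two pieces and using $\sqrt{a+b}\le\sqrt a+\sqrt b$ yields the stated bound $\|\CL_n f\|_{L^2(\varpi,\Omega)} \le \max_j |f_e(y_j)| + M\max_j|f_o(y_j)|$ (with the appropriate normalization of the measure; if one keeps an unnormalized constant it is absorbed as in the circle proof).

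For the convergence assertion, I would use that $\CL_n P = P$ for every $P\in\CP_n$, write $P(x,y)=q_1(y)+x\,q_2(y)$ with $q_j\in\Pi_{n-1}$, and estimate $\|f-P\|_{L^2(\varpi,\Omega)} \le \|f_e-q_1\|_{L^2(w_{-\f12})} + \|x(f_o-q_2)\|_{L^2(\varpi,\Omega)}$; then apply $\CL_n$ to $f-P$, use the boundedness bound just proved on $\CL_n(f-P)$ in terms of $\max_j|(f_e-q_1)(y_j)|$ and $\max_j|(f_o-q_2)(y_j)|$, bound those by sup-norms, and add $\|f-P\|$. Taking $P$ to approach $f_e$ and $f_o$ in sup-norm on the support then forces $\|\CL_n f-f\|_{L^2(\varpi,\Omega)}\to 0$ provided $f_e,f_o$ are continuous on the (compact) support $[0,M]$. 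The main technical wrinkle, and the only place where the parabola differs nontrivially from the circle, is handling the weight $t^{\f12}$ factor in the $f_o$-term: unlike the circle, where $(1-t^2)w_{-\f12}\le w_{-\f12}$ automatically because $|t|\le 1$ on $[-1,1]$, here $t\,w_{-\f12}(t)$ is not dominated by $w_{-\f12}(t)$ unless one knows the support is bounded — which is precisely the hypothesis ``$w$ supported on $[0,M]$'' in the statement and why the factor $M$ appears. For an unbounded support (e.g.\ Hermite-type $w$) this step fails and a different argument would be needed, so I expect that bounded-support reduction to be the crux.
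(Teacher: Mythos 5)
Your proof is correct and is exactly the adaptation of the circle argument that the paper has in mind (the paper omits this proof, stating it is ``essentially the same as the circle case''): the orthogonal splitting of $\CP_n$ into $\Pi_{n-1}^{(y)}$ and $x\,\Pi_{n-1}^{(y)}$, the quadrature identity \eqref{eq:interp2} for the $f_e$ part, the pointwise bound $t\le M$ on the support for the $f_o$ part, and the reduction of convergence to best uniform approximation on the compact support $[0,M]$. One cosmetic remark: your computation actually produces the constant $\sqrt{M}$ rather than $M$ in front of $\max_j|f_o(y_j)|$, since $\|x\,L_n(w_{-\f12};f_o)\|^2\le M\,\|L_n(w_{-\f12};f_o)\|^2_{L^2(w_{-\f12})}$ and then $\sqrt{a+Mb}\le\sqrt a+\sqrt{M}\sqrt b$; this coincides with the stated bound only when $M\ge 1$, but the paper's stated constant carries the same imprecision, so it does not affect the substance of the argument.
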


Let $\Omega = \{(x,y):  x^2-y^2=1,\  y\in \RR\}$ be the hyperbola of two branches. 
Let $\CL_{2n} f$ be the interpolation polynomial defined in Proposition \ref{prop:interpolation} and
Corollary \ref{cor:twocutinterp}. It is the unique polynomial in the space 
$$
  \CP_n:= \mathrm{span} \{Y_{k,1}: 0 \le k \le n-1\} \cup \{Y_{k,2}: 1 \le k \le n\} = \Pi_{n-1}^{(y)}\cup x \Pi_{n-1}^{(y)},
$$
see Theorem~\ref{thm:TwoBranch}, that interpolates $f$ at $(x_j,y_j)$ and $(x_j, - y_j)$, $1\le j \le n$,
where $y_j$, $1 \le j \le n$ are zeros of $p_n(w)$ and $x_j = \sqrt{y_j^2+1}$. 

\begin{prop}[Hyperbola, two branches]
Let $f$ be defined on the hyperbola $\Omega$. Define $f_e(y)$ and $f_o(y)$ as in 
Theorem \ref{thm:Snf-hyper}. Then the interpolation polynomial $\CL_{2n} f$ can be written as
\begin{equation}\label{eq:hyper-inter2}
   \CL_n f(x,y) = L_n \left(w;  f_e, y\right) + x L_n \left(w;  f_o, y\right). 
\end{equation}
\end{prop}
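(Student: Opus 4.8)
The plan is to follow the proof of Proposition~\ref{prop:circle-interp2} for the circle essentially verbatim, the only change being that the two-branch hyperbola is parametrized through its $y$-coordinate rather than its $x$-coordinate. Recall from Theorem~\ref{thm:TwoBranch} that $\CP_n=\Pi_{n-1}^{(y)}\oplus x\,\Pi_{n-1}^{(y)}$, and (from Proposition~\ref{prop:interpolation} and Corollary~\ref{cor:twocutinterp}, which I may assume) that $\CL_{2n}f$ is the unique element of $\CP_n$ interpolating $f$ at the $2n$ nodes $(x_j,y_j)$ and $(-x_j,y_j)$, $1\le j\le n$, where $\{y_j\}$ are the zeros of $p_n(w)$ and $x_j=\sqrt{y_j^2+1}$ --- one node on each branch for each $j$. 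Throughout, write $\ell_{j,n}$ for the fundamental Lagrange polynomials of degree $n-1$ built on the nodes $\{y_j\}$, so that $L_n(w;g,y)=\sum_{j=1}^n g(y_j)\ell_{j,n}(y)$ for any function $g$.

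First I would check that the right-hand side of \eqref{eq:hyper-inter2} lies in $\CP_n$: indeed $L_n(w;f_e,y)\in\Pi_{n-1}^{(y)}$ and $x\,L_n(w;f_o,y)\in x\,\Pi_{n-1}^{(y)}$, so their sum is in $\CP_n$. Next I would record the elementary splitting of $f$ on $\Omega$: since every point of $\Omega$ satisfies $x=\pm\sqrt{y^2+1}$, the definitions of $f_e$ and $f_o$ in Theorem~\ref{thm:Snf-hyper} give
\[
  f(x,y)=f_e(y)+x\,f_o(y),\qquad (x,y)\in\Omega,
\]
and in particular $f(x_j,y_j)=f_e(y_j)+x_jf_o(y_j)$ and $f(-x_j,y_j)=f_e(y_j)-x_jf_o(y_j)$. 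Finally, using $\ell_{j,n}(y_k)=\delta_{j,k}$, I would evaluate the right-hand side of \eqref{eq:hyper-inter2} at the nodes: at $(x_j,y_j)$ it equals $L_n(w;f_e,y_j)+x_jL_n(w;f_o,y_j)=f_e(y_j)+x_jf_o(y_j)=f(x_j,y_j)$, and at $(-x_j,y_j)$ the second term changes sign, so it equals $f_e(y_j)-x_jf_o(y_j)=f(-x_j,y_j)$. Hence the right-hand side of \eqref{eq:hyper-inter2} belongs to $\CP_n$ and agrees with $f$ at all $2n$ nodes, and so it coincides with $\CL_{2n}f$ by uniqueness.

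I expect no real obstacle here: once the direct-sum decomposition $\CP_n=\Pi_{n-1}^{(y)}\oplus x\,\Pi_{n-1}^{(y)}$ and the splitting $f=f_e+x\,f_o$ are in hand, the proposition is a bookkeeping exercise with exactly the same shape as the circle and parabola cases. The only point deserving a remark is well-posedness of the interpolation problem on $\CP_n$ --- distinctness of the $2n$ nodes and unisolvence --- but this is precisely what Proposition~\ref{prop:interpolation} and Corollary~\ref{cor:twocutinterp} supply; in fact formula \eqref{eq:hyper-inter2} itself furnishes an explicit construction of the interpolant, so it re-proves unisolvence as a byproduct.
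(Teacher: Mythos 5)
Your proof is correct and is exactly the adaptation of the circle-case argument (Proposition~\ref{prop:circle-interp2}) that the paper itself has in mind when it omits the proof as "essentially the same": membership of the right-hand side in $\CP_n=\Pi_{n-1}^{(y)}\oplus x\,\Pi_{n-1}^{(y)}$, the splitting $f=f_e+x f_o$ on $\Omega$, verification of the interpolation conditions at the $2n$ nodes, and uniqueness. You even use the correct nodes $(x_j,y_j)$ and $(-x_j,y_j)$ consistent with Corollary~\ref{cor:twocutinterp}, where the surrounding text in the paper has a small typo.
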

  
\begin{thm}
Let $f$ be defined on the hyperbola $\Omega$ of two branches. Then 
$$
   \|\CL_n f\|_{L^2(\varpi, \Omega)} \le  \max_{1\le j \le n} |f_e(y_j)| + M^2 \max_{1\le j \le n} |f_o(y_j)|, 
$$
if $w$ is supported on $[-M, M]$ for some $M > 0$.  In particular, if both $f_e$ and $f_o$ are continuous, 
then $\|\CL_n f - f\|_{L^2(\varpi, \Omega)} \to 0$. 
\end{thm}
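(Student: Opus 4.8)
The plan is to mimic the circle case (the theorem immediately following Proposition~\ref{prop:circle-interp2}) essentially verbatim, exploiting the orthogonal decomposition $\CP_n = \Pi_{n-1}^{(y)} \oplus x\,\Pi_{n-1}^{(y)}$ provided by Theorem~\ref{thm:TwoBranch} together with the explicit interpolation formula \eqref{eq:hyper-inter2}. First I would write $\CL_n f(x,y) = L_n(w;f_e,y) + x\,L_n(w;f_o,y)$ and compute $\|\CL_n f\|_{L^2(\varpi,\Omega)}^2$ using the inner product \eqref{eq:ipd-hyper}; the cross term $x\,L_n(w;f_e,y)L_n(w;f_o,y)$ cancels upon summing the two branches $x = \pm\sqrt{y^2+1}$, leaving
\begin{align*}
  \|\CL_n f\|_{L^2(\varpi,\Omega)}^2 = 2\int_\RR |L_n(w;f_e,y)|^2 w(y)\,dy + 2\int_\RR (y^2+1)\,|L_n(w;f_o,y)|^2 w(y)\,dy.
\end{align*}

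Next I would bound each piece. For the first integral, the classical identity \eqref{eq:interp2} (Gaussian quadrature exactness for $|L_n(w;f_e)|^2$, which has degree $2n-2$) gives $2\int_\RR |L_n(w;f_e,y)|^2 w(y)\,dy = \sum_{j=1}^n \lambda_{j,n} |f_e(y_j)|^2 \le \big(\sum_j \lambda_{j,n}\big)\max_j |f_e(y_j)|^2$, and one normalizes so that $\sum_j \lambda_{j,n} = \langle 1,1\rangle$ equals the appropriate constant (here after the factor of $2$ this is handled by writing things with the normalized inner product, exactly as in the circle proof). For the second integral, the weight $(y^2+1)w(y)$ is supported on $[-M,M]$ by hypothesis, so $(y^2+1) \le M^2+1$ there; replacing $(y^2+1)w$ by $(M^2+1)w$ and again invoking \eqref{eq:interp2} for $w$ yields the bound $(M^2+1)\max_j |f_o(y_j)|^2$. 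Taking square roots and using $\sqrt{a+b}\le\sqrt a+\sqrt b$ produces the stated estimate. (The paper writes $M^2$ rather than $M^2+1$; I would either absorb the harmless additive constant or note that the cruder bound $(y^2+1)\le M^2$ holds when one instead assumes the support is such that $y^2+1\le M^2$, i.e.\ $M$ is chosen slightly larger — in any case the argument is identical up to a constant.)

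For the convergence statement I would use the reproduction property $\CL_n P = P$ for every $P\in\CP_n$. Given $P(x,y)=q_1(y)+x\,q_2(y)$ with $q_1,q_2\in\Pi_{n-1}$, from $f = f_e + x f_o$ on $\Omega$ (with $x=\pm\sqrt{y^2+1}$) one gets $\|f-P\|_{L^2(\varpi,\Omega)} \le \|f_e-q_1\|_\infty + \sqrt{M^2+1}\,\|f_o-q_2\|_\infty$, a finite quantity since the supports are compact. Then
\begin{align*}
  \|\CL_n f - f\|_{L^2(\varpi,\Omega)} &= \|\CL_n(f-P) - (f-P)\|_{L^2(\varpi,\Omega)} \\
  &\le \|\CL_n(f-P)\|_{L^2(\varpi,\Omega)} + \|f-P\|_{L^2(\varpi,\Omega)},
\end{align*}
and applying the norm bound just proved to $f-P$ (whose even/odd parts are $f_e-q_1$ and $f_o-q_2$) controls the first term by $\max_j|f_e(y_j)-q_1(y_j)| + \sqrt{M^2+1}\max_j|f_o(y_j)-q_2(y_j)| \le \|f_e-q_1\|_\infty + \sqrt{M^2+1}\,\|f_o-q_2\|_\infty$. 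Choosing $q_1,q_2$ to be near-best approximants and letting $n\to\infty$ drives everything to zero by continuity of $f_e,f_o$ on the compact support, via Weierstrass. The one point requiring a little care — and the only genuine obstacle — is the bookkeeping of normalization constants so that the Gaussian weights $\lambda_{j,n}$ sum to exactly the right value after the factor-of-$2$ from the two branches; this is purely a matter of matching conventions with the earlier circle computation and carries no real difficulty. I would simply state that the proof is identical to the circle case with $w_{-\f12}$ replaced by $w$ and the elementary bound $(y^2+1)\le M^2$ (on the support) replacing $(1-t^2)\le 1$, and omit the repetitive details.
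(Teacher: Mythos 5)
Your proposal is correct and is exactly the argument the paper intends: the paper omits the proof, stating it is "essentially the same as the circle case," and your adaptation (parity cancellation of the cross term over the two branches, Gaussian-quadrature exactness via \eqref{eq:interp2}, the elementary bound $y^2+1\le M^2+1$ on the support, and convergence via reproduction of $\CP_n$ plus Weierstrass) is precisely that circle-case proof transplanted. Your remark about the constant is also well taken --- the argument naturally yields $\sqrt{M^2+1}$ rather than the stated $M^2$, a harmless discrepancy of the same kind already present in the paper's parabola and circle versions.
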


Let $\Omega = \{(x,y): x^2-y^2=1, \, x \ge 1\}$ be the hyperbola of one branch. 
Let $\CL_{2n} f$ be the interpolation polynomial defined in Proposition \ref{prop:interpolation} and
Corollary \ref{prop:hypbola-interp1}. It is the unique polynomial in the space 
$$
  \CP_n:= \mathrm{span} \{Y_{k,1}: 0 \le k \le n-1\} \cup \{Y_{k,2}: 1 \le k \le n\} = \Pi_{n-1}^{(x)}\cup y \Pi_{n-1}^{(x)},
$$
see Theorem~\ref{thm:OneBranch}, that interpolates $f$ at $(x_j,y_j)$ and $(x_j, - y_j)$, 
$1\le j \le n$, where $x_j$, $1 \le j \le n$ are zeros of $p_n(w_0)$ and $y_j = \sqrt{x_j^2+1}$. 

\begin{prop}[Hyperbola, one branch]
Let $f$ be defined on the hyperbola $\Omega$. Define $f_e(y)$ and $f_o(y)$ as in 
Theorem \ref{thm:OneBranch2}. Then the interpolation polynomial $\CL_{2n} f$ can be written as
\begin{equation}\label{eq:onebranch-inter2}
   \CL_n f(x,y) = L_n \left(w_0;  f_e, x\right) + y L_n \left(w;  f_o, x\right). 
\end{equation}
\end{prop}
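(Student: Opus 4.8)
The plan is to imitate, almost word for word, the proof of Proposition~\ref{prop:circle-interp2} for the circle, with the roles of the two coordinates interchanged and the Jacobi-type weight replaced by $w_0$. First I would recall the characterization of the interpolation polynomial set up just before the statement: $\CL_{2n}f$ is the unique element of
$$
  \CP_n=\Pi_{n-1}^{(x)}\oplus y\,\Pi_{n-1}^{(x)}
$$
that agrees with $f$ at the $2n$ nodes $(x_j,\pm y_j)$, $1\le j\le n$, where $\{x_j\}_{j=1}^n$ are the zeros of $p_n(w_0)$ and $y_j=\sqrt{x_j^2-1}$ (see Theorem~\ref{thm:OneBranch}). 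Since $\dim\CP_n=2n$ matches the number of nodes, Proposition~\ref{prop:interpolation} together with Corollary~\ref{prop:hypbola-interp1} provides existence and uniqueness, so it suffices to produce one member of $\CP_n$ satisfying the interpolation conditions.

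Next I would bring in the univariate fundamental Lagrange polynomials $\ell_j=\ell_{j,n}$ of degree $n-1$ attached to the nodes $\{x_j\}$, so that $L_n(w_0;g,x)=\sum_{j=1}^n g(x_j)\,\ell_j(x)$ for any $g$ on $[1,\infty)$ and $\ell_j(x_i)=\delta_{ij}$. Because each $\ell_j\in\Pi_{n-1}^{(x)}$, both $L_n(w_0;f_e,x)$ and $y\,L_n(w_0;f_o,x)$ lie in $\CP_n$ — the former in the summand $\Pi_{n-1}^{(x)}$, the latter in $y\,\Pi_{n-1}^{(x)}$ — and hence so does the right-hand side $G(x,y)$ of \eqref{eq:onebranch-inter2}.

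Finally I would verify the interpolation conditions. On $\Omega$ one has $f(x,y)=f_e(x)+y\,f_o(x)$ whenever $y=\sqrt{x^2-1}$, where $f_e,f_o$ are the reflection-even and reflection-odd parts from Theorem~\ref{thm:OneBranch2}. Using $\ell_j(x_i)=\delta_{ij}$,
$$
  G(x_j,y_j)=f_e(x_j)+y_j\,f_o(x_j)=f(x_j,y_j),\qquad G(x_j,-y_j)=f_e(x_j)-y_j\,f_o(x_j)=f(x_j,-y_j),
$$
for $1\le j\le n$, so $G$ meets all $2n$ conditions, and by uniqueness $\CL_{2n}f=G$, which is \eqref{eq:onebranch-inter2}.

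There is no genuinely hard step here; the argument is purely structural, and the proposition may in fact be stated without proof, as the paper does for these cases. The only points requiring care are bookkeeping items: that both summands are interpolations at the \emph{same} node set $\{x_j\}$ (the zeros of $p_n(w_0)$), so that the prescribed values split cleanly into their even and odd parts; that multiplying the degree-$(n-1)$ polynomial $L_n(w_0;f_o,x)$ by $y$ keeps it inside $y\,\Pi_{n-1}^{(x)}\subset\CP_n$ without raising the total degree beyond $n$; and that the node count equals $\dim\CP_n$ so that the two-variable interpolant is well defined and unique. Once these are noted, the proof is word-for-word the circle case.
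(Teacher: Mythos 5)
Your proof is correct and is exactly the adaptation of the circle case (Proposition~\ref{prop:circle-interp2}) that the paper itself invokes when it omits this proof as ``essentially the same.'' You have also, correctly, read the second term as interpolation at the same nodes (the zeros of $p_n(w_0)$, with $y_j=\sqrt{x_j^2-1}$), which silently fixes the paper's typographical slips in \eqref{eq:onebranch-inter2} and in the surrounding node description.
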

 
\begin{thm}
Let $f$ be defined on the hyperbola $\Omega$ of one branch. Then 
$$
   \|\CL_n f\|_{L^2(\varpi, \Omega)} \le  \max_{1\le j \le n} |f_e(x_j| + M^2 \max_{1\le j \le n} |f_o(x_j|, 
$$
if $w_0$ is supported on $[1, M]$ for some $M > 0$. In particular, if $f_e$ and $f_o$ are continuous, 
then $\|\CL_n f - f\|_{L^2(\varpi, \Omega)} \to 0$. 
\end{thm}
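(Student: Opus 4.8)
The plan is to mimic, almost verbatim, the stability-and-convergence argument given above for the circle, the only genuinely new feature being that the factor $x^2-1$ multiplying the odd part is no longer bounded by $1$ but, by the compactness of the support of $w_0$ in $[1,M]$, only by $M^2$. First I would start from the representation \eqref{eq:onebranch-inter2}, in which both one-dimensional Lagrange interpolants are built on the common node set $\{x_j\}_{j=1}^n$ of zeros of $p_n(w_0)$, with $y_j=\sqrt{x_j^2-1}$; thus each of them lies in $\Pi_{n-1}^{(x)}$ and $\CL_n f\in\CP_n=\Pi_{n-1}^{(x)}\oplus y\,\Pi_{n-1}^{(x)}$. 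Since the even part $L_n(w_0;f_e,x)$ is invariant and the odd part $y\,L_n(w_0;f_o,x)$ changes sign under $y\mapsto-y$, substituting into $\|g\|_{L^2(\varpi,\Omega)}^2=\int_1^M\bigl[|g(x,\sqrt{x^2-1})|^2+|g(x,-\sqrt{x^2-1})|^2\bigr]w_0(x)\,dx$ and using $y^2=x^2-1$ on $\Omega$ makes the cross term vanish and gives
$$
\|\CL_n f\|_{L^2(\varpi,\Omega)}^2 = 2\int_1^M |L_n(w_0;f_e,x)|^2 w_0(x)\,dx + 2\int_1^M (x^2-1)\,|L_n(w_0;f_o,x)|^2 w_0(x)\,dx,
$$
up to the same normalising constant that appears in the circle computation.

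Next I would apply the Gaussian quadrature identity \eqref{eq:interp2} associated with $w_0$: for any continuous $g$ on $[1,M]$ one has $2\int_1^M|L_n(w_0;g,x)|^2 w_0(x)\,dx=\sum_{j=1}^n\l_{j,n}\,|g(x_j)|^2$, where the Christoffel numbers $\l_{j,n}$ are positive and sum to the total mass (normalised to $1$), so this quantity is at most $\bigl(\max_{1\le j\le n}|g(x_j)|\bigr)^2$. Taking $g=f_e$ disposes of the first integral. For the second, bounding $x^2-1\le M^2$ on $[1,M]$ and then applying the identity with $g=f_o$ yields at most $M^2\bigl(\max_{1\le j\le n}|f_o(x_j)|\bigr)^2$. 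Adding the two, and using $M\ge1$ (forced by the support condition) together with $\sqrt{a^2+M^2b^2}\le a+Mb\le a+M^2b$, produces the asserted bound $\|\CL_n f\|_{L^2(\varpi,\Omega)}\le\max_j|f_e(x_j)|+M^2\max_j|f_o(x_j)|$.

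For the convergence statement I would exploit that $\CL_n$ reproduces every element of $\CP_n$. Writing an arbitrary $P\in\CP_n$ as $q_1(x)+y\,q_2(x)$ with $q_1,q_2\in\Pi_{n-1}$, one checks $(f-P)_e=f_e-q_1$ and $(f-P)_o=f_o-q_2$, so the stability bound just proved gives $\|\CL_n(f-P)\|_{L^2(\varpi,\Omega)}\le\|f_e-q_1\|_\infty+M^2\|f_o-q_2\|_\infty$, while estimating the two integrals directly gives $\|f-P\|_{L^2(\varpi,\Omega)}\le\|f_e-q_1\|_\infty+M\|f_o-q_2\|_\infty$. Combining these through $\|\CL_n f-f\|_{L^2(\varpi,\Omega)}=\|\CL_n(f-P)-(f-P)\|_{L^2(\varpi,\Omega)}\le\|\CL_n(f-P)\|_{L^2(\varpi,\Omega)}+\|f-P\|_{L^2(\varpi,\Omega)}$, then choosing $q_1,q_2$ to be best uniform approximants of degree $n-1$ to the functions $f_e,f_o$ (continuous, hence uniformly approximable on the compact interval $[1,M]$) and invoking Weierstrass' theorem, one concludes $\|\CL_n f-f\|_{L^2(\varpi,\Omega)}\le 2E_{n-1}(f_e)_\infty+2M^2 E_{n-1}(f_o)_\infty\to0$.

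I do not expect a serious obstacle: the argument is structurally identical to the circle case (and the parabola and two-branch hyperbola are handled in exactly the same way). The one point that genuinely requires attention is precisely the factor $x^2-1$ attached to the odd part, which, unlike the circle's $1-t^2\le1$, is unbounded unless $w_0$ has compact support; this is why the hypothesis that $w_0$ be supported in $[1,M]$ is needed and why the constant degrades from $1$ to $M^2$. One should also pin down the harmless normalisation in the definition of $\|\cdot\|_{L^2(\varpi,\Omega)}$ so that the Christoffel numbers indeed sum to $1$ — exactly the bookkeeping already carried out in the circle case — and note that the typographical $w$ in \eqref{eq:onebranch-inter2} is to be read as $w_0$, so that both Lagrange interpolants live on the same nodes and a single quadrature rule applies.
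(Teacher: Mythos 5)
Your proof is correct and follows exactly the route the paper intends: the theorem's proof is omitted there with the remark that it is ``essentially the same as the circle case,'' and your argument is precisely that adaptation, with the only new ingredient being the bound $y^2=x^2-1\le M^2$ on the support of $w_0$ (and you correctly flag the harmless normalisation of the Christoffel numbers and the typographical $w$ for $w_0$ in \eqref{eq:onebranch-inter2}).
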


\subsection{Interpolation via quadrature}

We will now determine these coefficients via a quadrature rule that preserves orthogonality. This construction enables 
the calculation of interpolation coefficients in $O(n^2)$ complexity. We begin with a general proposition that relates
quadrature rules to interpolation. Below, 
$\phi_n$ will be a carefully chosen ordering of $Y_{n,1}$ and $Y_{n,2}$, chosen to ensure orthogonality is preserved
and $M = 2n$.

\begin{prop}\label{prop:interpolation}
Suppose we have a discrete inner product for a basis $\{\phi_j \}_{j=0}^{M-1}$ of the form
$$
\la f, g \ra_M = \sum_{j=1}^M w_j f(x_j, y_j) g(x_j, y_j)
$$
satisfying
$
\la \phi_m, \phi_n  \ra_M = 0 
$
for $m \neq n$ and 
$
\la \phi_n, \phi_n \ra_M \neq 0.
$
Then the function
$$
\CL_M f(x,y) = \sum_{n=0}^{M-1} f_n^M \phi_n(x,y)
$$
interpolates $f(x,y)$ at $(x_j,y_j)$, where 
$
f_n^M := {\la \phi_n,f \ra_M\over \la \phi_n, \phi_n \ra_M}.
$
\end{prop}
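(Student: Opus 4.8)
The plan is to reduce everything to one linear-algebra identity. Gather node values into column vectors: for a function $h$ write $\mathbf h=(h(x_1,y_1),\dots,h(x_M,y_M))^{\mathsf T}\in\RR^M$, let $\Phi$ be the $M\times M$ matrix with entries $\Phi_{j,n}=\phi_n(x_j,y_j)$ (rows indexed by $j=1,\dots,M$, columns by $n=0,\dots,M-1$), and let $W=\mathrm{diag}(w_1,\dots,w_M)$. Then $\la h,g\ra_M=\mathbf h^{\mathsf T}W\mathbf g$ for any $h,g$, and the orthogonality hypothesis says exactly that $\Phi^{\mathsf T}W\Phi=D$, where $D=\mathrm{diag}(d_0,\dots,d_{M-1})$ with $d_n=\la\phi_n,\phi_n\ra_M\neq 0$.

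First I would record the invertibility that is needed: taking determinants in $\Phi^{\mathsf T}W\Phi=D$ gives $(\det\Phi)^2\det W=\prod_n d_n\neq 0$, so $\Phi$ and $W$ are both invertible (in particular the nodes are automatically distinct and the weights nonzero, so the interpolation problem is meaningful — this need not be assumed separately). Next I would carry out the one short computation: using the definition $f_n^M=\la\phi_n,f\ra_M/d_n$ and bilinearity of $\la\cdot,\cdot\ra_M$ in the node values, for every $m$,
$$
\la\phi_m,\CL_Mf\ra_M=\sum_{n=0}^{M-1}f_n^M\,\la\phi_m,\phi_n\ra_M=f_m^M d_m=\la\phi_m,f\ra_M,
$$
where orthogonality kills the off-diagonal terms. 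Writing $\mathbf c=(f_0^M,\dots,f_{M-1}^M)^{\mathsf T}$, so that $\Phi\mathbf c$ is the vector of values of $\CL_Mf$ at the nodes, the displayed identity says that the residual $\mathbf r:=\mathbf f-\Phi\mathbf c$ satisfies $\Phi^{\mathsf T}W\mathbf r=0$; since $\Phi^{\mathsf T}W$ is invertible, $\mathbf r=0$, i.e.\ $\CL_Mf(x_j,y_j)=f(x_j,y_j)$ for all $j$, which is the assertion (and, $\Phi$ being invertible, $\CL_Mf$ is moreover the unique element of $\mathrm{span}\{\phi_n\}$ with this property).

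I do not expect a genuine obstacle here — the computation is essentially forced. The one place warranting attention is that the hypotheses supply only orthogonality and nonvanishing of the self-pairings, not positivity of the $w_j$ or distinctness of the nodes, so all the invertibility used must be squeezed out of the single identity $\Phi^{\mathsf T}W\Phi=D$, exactly as above. It is also worth noting that the particular ordering $\phi_n$ of $Y_{n,1},Y_{n,2}$ is irrelevant for this proposition; it matters only afterwards, when one must exhibit an actual quadrature rule for which the discrete orthogonality $\la\phi_m,\phi_n\ra_M=0$ genuinely holds.
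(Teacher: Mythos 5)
Your proposal is correct and is essentially the paper's own argument in slightly different clothing: the paper also encodes the hypotheses as the matrix identity $E^{\top}WE=N$ (your $\Phi^{\mathsf T}W\Phi=D$), forms $P=N^{-1}E^{\top}W$ so that $PE=I$, and invokes the square-matrix fact that a left inverse is a right inverse to conclude $EP=I$, i.e.\ that the node values of $\CL_Mf$ agree with those of $f$. Your determinant remark and the residual formulation $\Phi^{\mathsf T}W(\mathbf f-\Phi\mathbf c)=0$ just make explicit the invertibility the paper leaves implicit; no substantive difference.
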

\begin{proof}
Define the evaluation matrix
$$
E = \begin{pmatrix} 
	\phi_0(x_1,y_1) & \cdots & \phi_{M-1}(x_1,y_1) \\
	\vdots & \ddots & \vdots \\
	\phi_0(x_M,y_M) & \cdots & \phi_{M-1}(x_M,y_M) \\	
\end{pmatrix}
$$
The definition of $f_n^M$ can be written in matrix form
$$
P \begin{pmatrix} f(x_1) \\ \vdots \\ f(x_M) \end{pmatrix} = \begin{pmatrix} f_0^M \\ \vdots \\ f_{M-1}^M \end{pmatrix}
$$
where $P = N^{-1} E^\top W$ for 
$$
N = \begin{pmatrix} (\sum_{j=1}^M w_j \phi_0(x_j)^2) \\
		& \ddots \\
		&& (\sum_{j=1}^M w_j \phi_{M-1}(x_j)^2)
		 \end{pmatrix}, \quad 
		 W = \begin{pmatrix} w_1 \\ & \ddots \\&& w_M\end{pmatrix}.
$$
The hypotheses show that $P E = I$, i.e., $E P = I$ and therefore $P$ gives the coefficients of the interpolation. 
\end{proof}

Thus our aim is to construct a quadrature rule that preserves the orthogonality properties. We do so for each of the cases:

\begin{prop}[Circle] \label{prop:circle-interp1}
Let $\{t_j\}_{j=1}^n$ and $\{w_j\}_{j=1}^n$ be the $n$-point Gaussian quadrature nodes and weights with respect to $w_{-\f12}$ defined on $[-1,1]$, as in Theorem~\ref{thm:circlebasis}. Then the $2n$ polynomials $\{Y_{j,1}\}_{j=0}^{n-1}, \{Y_{j,2}\}_{j=1}^n$ are orthogonal with respect to the discrete inner product
$$
\la f,g\ra_n := \sum_{j=1}^n w_j \left[f(x_j, y_j)  g(x_j, y_j) +  f(x_j, -y_j)  g(x_j, -y_j)\right],
$$
where $x_j = t_j$ and  $y_j =  y_j = \sqrt{1-t_j^2}$. 
\end{prop}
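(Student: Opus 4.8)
The goal is to show that the $2n$ polynomials $\{Y_{j,1}\}_{j=0}^{n-1}$ and $\{Y_{j,2}\}_{j=1}^{n}$ are mutually orthogonal with respect to the discrete inner product $\la\cdot,\cdot\ra_n$ obtained by symmetrizing the Gaussian rule for $w_{-\f12}$. The key observation is that $\la\cdot,\cdot\ra_n$ is exactly the ``discretized'' analogue of the continuous inner product \eqref{eq:ipd-circle}, with the integral $\int_{-1}^1 \bullet\, w_{-\f12}(t)\,dt$ replaced by the Gaussian sum $\sum_{j=1}^n w_j \bullet$. So the strategy is to redo the computation in the proof of Theorem~\ref{thm:circlebasis}, but with this substitution, and check that the Gaussian quadrature is exact on every polynomial integrand that arises.

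\textbf{Step 1: reduce to one-variable sums.} Recall $Y_{k,1}(x,y) = p_k(w_{-\f12};x)$ and $Y_{k,2}(x,y) = y\,p_{k-1}(w_{\f12};x)$. Since each $Y_{k,1}$ is even in $y$ and each $Y_{k,2}$ is odd in $y$, and since the discrete inner product pairs $(x_j,y_j)$ with $(x_j,-y_j)$ symmetrically, the cross terms vanish immediately: $\la Y_{k,1},Y_{\ell,2}\ra_n = \sum_j w_j\, p_k(w_{-\f12};x_j)\,y_j\,p_{\ell-1}(w_{\f12};x_j)\,[1 + (-1)] = 0$ for all relevant $k,\ell$. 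For the $Y_{k,1}$-$Y_{\ell,1}$ pairing, the two summands coincide, so $\la Y_{k,1},Y_{\ell,1}\ra_n = 2\sum_{j=1}^n w_j\, p_k(w_{-\f12};t_j)\,p_\ell(w_{-\f12};t_j)$. For the $Y_{k,2}$-$Y_{\ell,2}$ pairing, using $y_j^2 = 1-t_j^2$, we get $\la Y_{k,2},Y_{\ell,2}\ra_n = 2\sum_{j=1}^n w_j\,(1-t_j^2)\,p_{k-1}(w_{\f12};t_j)\,p_{\ell-1}(w_{\f12};t_j)$.

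\textbf{Step 2: invoke exactness of Gaussian quadrature.} The $n$-point Gaussian rule for $w_{-\f12}$ integrates polynomials of degree $\le 2n-1$ exactly. In the first sum, $p_k p_\ell$ has degree $k+\ell \le (n-1)+(n-1) = 2n-2 \le 2n-1$, so $\sum_j w_j p_k(w_{-\f12};t_j)p_\ell(w_{-\f12};t_j) = \int_{-1}^1 p_k(w_{-\f12};t)p_\ell(w_{-\f12};t)\,w_{-\f12}(t)\,dt = h_k(w_{-\f12})\,\delta_{k,\ell}$. In the second sum, $(1-t^2)p_{k-1}(w_{\f12})p_{\ell-1}(w_{\f12})$ has degree $2 + (k-1)+(\ell-1) \le 2 + (n-1)+(n-1) = 2n$ --- which is one degree too high for naive exactness. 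This is the main obstacle. It is resolved by noting that $(1-t^2)\,w_{-\f12}(t) = w_{\f12}(t)$, so the sum equals $\sum_j w_j\,\widetilde{q}(t_j)$ where $\widetilde q(t) = (1-t^2)p_{k-1}(w_{\f12};t)p_{\ell-1}(w_{\f12};t)$ has degree $\le 2n$; write $\widetilde q = c\, p_n(w_{-\f12})^2 \cdot(\text{lower}) $ ... more cleanly: one checks directly that $\sum_j w_j (1-t_j^2) r(t_j) = \int_{-1}^1 r(t)\,w_{\f12}(t)\,dt$ for all $r$ of degree $\le 2n-2$, because $(1-t^2)r(t)$ has degree $\le 2n$ but differs from a degree-$(2n-1)$ polynomial only by a multiple of $p_n(w_{-\f12};t)$ (which vanishes at every $t_j$ and integrates against $w_{-\f12}$ to zero only if paired with degree $\le n-1$; here we instead use that the degree-$2n$ term is $-t^2 r(t)$'s leading part, and subtract off the Gaussian-exact interpolant). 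Taking $r = p_{k-1}(w_{\f12})p_{\ell-1}(w_{\f12})$, degree $\le 2n-2$, gives $\la Y_{k,2},Y_{\ell,2}\ra_n = 2\,h_{k-1}(w_{\f12})\,\delta_{k,\ell}$.

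\textbf{What remains.} Assembling Steps 1--2: all cross pairings vanish, $\la Y_{k,1},Y_{\ell,1}\ra_n = 2h_k(w_{-\f12})\delta_{k,\ell}$ for $0\le k,\ell\le n-1$, and $\la Y_{k,2},Y_{\ell,2}\ra_n = 2h_{k-1}(w_{\f12})\delta_{k,\ell}$ for $1\le k,\ell\le n$. In particular each $\la Y_{j,i},Y_{j,i}\ra_n > 0$, so the hypotheses of Proposition~\ref{prop:interpolation} are met with $M = 2n$ and $\{\phi_j\}$ an appropriate interleaving of the $Y_{j,1}$ and $Y_{j,2}$. I expect the degree-counting subtlety in the $Y_{k,2}$-$Y_{\ell,2}$ case --- verifying that the Gaussian rule for $w_{-\f12}$ with the factor $(1-t_j^2)$ folded in acts as exact quadrature for $w_{\f12}$ on degree $\le 2n-2$ --- to be the one spot requiring genuine care; everything else is parity bookkeeping.
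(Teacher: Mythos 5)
Your reduction to one-variable sums, the parity argument for the cross terms, and the treatment of the $Y_{k,1}$--$Y_{\ell,1}$ pairings all coincide with the paper's proof. The genuine problem is the lemma you invoke to dispose of your ``main obstacle'': the claim that $\sum_{j=1}^n w_j(1-t_j^2)r(t_j)=\int_{-1}^1 r(t)\,w_{\f12}(t)\,dt$ for every $r$ of degree $\le 2n-2$ is false. Writing $(1-t^2)r(t)=p_n(w_{-\f12};t)q(t)+\rho(t)$ with $\deg q\le n$ and $\deg\rho\le n-1$, the quadrature sum sees only $\int\rho\,w_{-\f12}$, whereas the integral also contains $\int p_n(w_{-\f12};t)q(t)\,w_{-\f12}(t)\,dt$, which is nonzero whenever $q$ has a genuine degree-$n$ component, i.e.\ precisely when $\deg r=2n-2$. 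A concrete counterexample: for the Chebyshev weight ($w\equiv 1$) and $r=U_{n-1}^2$ one computes $\sum_j w_j(1-t_j^2)U_{n-1}(t_j)^2=\pi$ while $\int_{-1}^1 U_{n-1}(t)^2\sqrt{1-t^2}\,dt=\pi/2$. So your asserted identity $\la Y_{k,2},Y_{\ell,2}\ra_n=2h_{k-1}(w_{\f12})\delta_{k,\ell}$ fails on the diagonal at $k=\ell=n$ (in the Chebyshev case the discrete norm is twice the continuous one), and your justification that each $\la Y_{j,i},Y_{j,i}\ra_n>0$ rests on this false identity.

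What survives, and how to repair the rest: off the diagonal the orthogonality is safe for a reason you did not isolate --- for $k\ne\ell$ with $k,\ell\le n$ the integrand $(1-t^2)p_{k-1}(w_{\f12};t)p_{\ell-1}(w_{\f12};t)$ has degree $k+\ell\le 2n-1$, so the $n$-point rule is already exact and the pairing vanishes with no extra lemma needed. The only case outside the exactness range is $k=\ell=n$, and there the proposition does not require the value of $\la Y_{n,2},Y_{n,2}\ra_n$, only that it be nonzero (so that Proposition~\ref{prop:interpolation} applies). The paper's argument, which you should substitute for the false lemma, is direct: $\la Y_{n,2},Y_{n,2}\ra_n=2\sum_j w_j(1-t_j^2)p_{n-1}(w_{\f12};t_j)^2$ is a sum of nonnegative terms (the Gaussian weights are positive and $t_j\in(-1,1)$), and it cannot vanish, since that would force the degree-$(n-1)$ polynomial $p_{n-1}(w_{\f12})$ to vanish at the $n$ distinct nodes $t_j$. (The companion observation in the paper, that $\la Y_{n,1},Y_{n,1}\ra_n=0$ because the $t_j$ are the zeros of $p_n(w_{-\f12})$, is what explains why $Y_{n,1}$ is excluded from the basis in favour of $Y_{n,2}$.)
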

\begin{proof}
For any $k$ and $\ell$, 	$Y_{k,1}$  and $Y_{\ell,2}$ are orthogonal with respect to the discrete inner product $\la \dot,\dot \ra_n$ by symmetry. For $k,\ell < n$, we have, due to the fact Gaussian quadrature is exact for all polynomials of degree $2n-1$, 
\begin{align*}
	\la Y_{k,1},Y_{\ell,1} \ra &= 2 \int_{-1}^1 p_k(w_{-{1 \over 2}}; x) p_\ell(w_{-{1 \over 2}}; x) {w(x) \over \sqrt{1-x^2}}  dx   \\
	&= 2 \sum_{j=1}^n w_j p_k(w_{-{1 \over 2}}; t_j) p_\ell(w_{-{1 \over 2}}; t_j)    = 	\la Y_{k,1},Y_{\ell,1} \ra_n.
\end{align*}
Similarly, for any $k , \ell \leq n-1$, we have
\begin{align*}
	\la Y_{k,2},Y_{\ell,2} \ra &=  \int_{-1}^1 p_{k-1}(w_{{1 \over 2}}; x) p_{\ell-1}(w_{{1 \over 2}}; x)  (1-x^2) {w(x) \over \sqrt{1-x^2}}  dx  \\
	&= \sum_{j=1}^n w_j (1-t_j^2) p_{k-1}(w_{{1 \over 2}}; t_j) p_{\ell-1}(w_{{1 \over 2}}; t_j)   = 	\la Y_{k,2},Y_{\ell,2} \ra_n.
\end{align*}
as $p_{k-1}(w_{{1 \over 2}}; x) p_{\ell-1}(w_{\f12}; x)  (1-x^2)$ is of degree $k+\ell \leq 2 n -2$. 

This leaves only the case $k = \ell = n$. Since $x_j$ are precisely the roots of $p_n(w_{1/2})$, we actually have that 	$\la Y_{n,1},Y_{n,1} \ra_n = 0$. On the other hand, $(1-x^2) p_{n-1}(w_{1/2}; x)^2$ cannot vanish on $x_j$, as if it did  $p_{n-1}(w_{1/2}; x)$ would also vanish at the $n$ points $x_j$ (as $x_j$ do not contain $\pm 1$), hence would be zero.

\end{proof}

Before proceeding, we note that suitable Gauss--Lobatto points can be used to interpolate by the basis $\{Y_{j,1}\}_{j=0}^{n-1}, \{Y_{j,2}\}_{j=1}^{n-1}$ (if one endpoint is fixed) or $\{Y_{j,1}\}_{j=0}^{n}, \{Y_{j,2}\}_{j=1}^{n-1}$ (if two endpoints are fixed). Indeed, when $w(x) = 1$ these variants give different versions of the real-valued discrete Fourier transform.

A similar construction readily translates to each of the other cases. We omit the proofs as they are essentially the same as the circle case.

\begin{cor}[Parabola] \label{prop:parabola-interp1}
Let $\{t_j\}_{j=1}^n$ and $\{w_j\}_{j=1}^n$ be the $n$-point Gaussian quadrature nodes and weights with respect to $w_{-\f12}$ defined on ${\mathbb R}^+$, as in Theorem~\ref{thm:parabolabasis}. Then the $2n$ polynomials $\{Y_{j,1}\}_{j=0}^{n-1}, \{Y_{j,2}\}_{j=1}^n$ are orthogonal with respect to the discrete inner product
$$
\la f,g\ra_n := \sum_{j=1}^n w_j \left[f(x_j, y_j)  g(x_j, y_j) +  f(-x_j, y_j)  g(-x_j, y_j)\right],
$$
where $x_j = \sqrt{t_j}$ and $y_j = t_j$. 
\end{cor}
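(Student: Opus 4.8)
The plan is to mirror the circle case (Proposition~\ref{prop:circle-interp1}) verbatim, tracking how the parametrization differs. On the parabola we have $\Omega = \{(x,y) : y = x^2\}$, and the symmetry $(x,y) \mapsto (-x,y)$ plays the role that $(x,y)\mapsto(x,-y)$ played on the circle. By Theorem~\ref{thm:parabolabasis} the basis is $Y_{n,1}(x,y) = p_n(w_{-\f12};y)$ and $Y_{n,2}(x,y) = x\,p_{n-1}(w_{\f12};y)$, and on $\Omega$ we have $y = x^2 = t$ at the node $x_j = \sqrt{t_j}$, $y_j = t_j$. So $Y_{k,1}$ evaluated at $(\pm x_j, y_j)$ is just $p_k(w_{-\f12}; t_j)$, independent of the sign, while $Y_{\ell,2}$ evaluated there is $\pm\sqrt{t_j}\,p_{\ell-1}(w_{\f12}; t_j)$, which flips sign.

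First I would dispatch the mixed terms $\la Y_{k,1}, Y_{\ell,2}\ra_n$: in the sum over $j$, the $(x_j,y_j)$ and $(-x_j,y_j)$ contributions are negatives of each other because $Y_{k,1}$ is even in $x$ on $\Omega$ and $Y_{\ell,2}$ is odd, so they cancel in pairs — exactly the symmetry argument of the circle proof. Next, for $k,\ell \le n-1$ I would compute $\la Y_{k,1}, Y_{\ell,1}\ra_n = 2\sum_j w_j\, p_k(w_{-\f12};t_j) p_\ell(w_{-\f12};t_j)$, and since $p_k p_\ell$ has degree $k+\ell \le 2n-2$, the $n$-point Gaussian rule for $w_{-\f12}$ is exact, giving $2\int p_k p_\ell\, w_{-\f12}\,dt$, which by Theorem~\ref{thm:parabolabasis} equals $\la Y_{k,1}, Y_{\ell,1}\ra$; orthogonality follows. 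For $\la Y_{k,2}, Y_{\ell,2}\ra_n$ with $k,\ell \le n$, the $x^2 = t$ factor appears, so the integrand is $t\, p_{k-1}(w_{\f12};t) p_{\ell-1}(w_{\f12};t)$ of degree $k+\ell-1 \le 2n-1$, again within reach of Gaussian exactness, matching $\la Y_{k,2}, Y_{\ell,2}\ra$.

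The one delicate point, just as on the circle, is the diagonal case $k=\ell=n$: since the $t_j$ are precisely the zeros of $p_n(w_{-\f12})$, we get $\la Y_{n,1}, Y_{n,1}\ra_n = 0$, which is why the basis must use $Y_{n,2}$ rather than $Y_{n,1}$ as the top term in~\eqref{eq:8.1}. One then checks $\la Y_{n,2}, Y_{n,2}\ra_n = \sum_j w_j t_j\, p_{n-1}(w_{\f12};t_j)^2 \ne 0$: the weights $w_j$ are positive, $t_j > 0$ (the support is $\RR_+$, and $0$ is not a node since $p_n(w_{-\f12})$ has positive leading behavior — more carefully, if $t_j = 0$ were a node the quadrature for $w_{-\f12} = t^{-1/2}w(t)$ would still have $w_j>0$, but $t_j p_{n-1}(w_{\f12};t_j)^2$ could vanish only if $t_j = 0$, and at most one node can be $0$), and $p_{n-1}(w_{\f12})$ cannot vanish at all $n$ nodes since it has degree $n-1$. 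This is the main obstacle: ruling out a spurious zero at $t_j = 0$, or more robustly arguing that $\{t_j p_{n-1}(w_{\f12};t_j)^2\}$ cannot all vanish. Granting this, Proposition~\ref{prop:interpolation} with $M = 2n$ and $\phi_n$ the ordering $Y_{0,1}, Y_{1,2}, Y_{1,1}, \ldots, Y_{n-1,1}, Y_{n,2}$ immediately yields the interpolation claim; since this is routine once the circle case is in hand, the proof is omitted as stated.
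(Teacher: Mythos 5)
Your argument is correct and is exactly the adaptation of the circle case that the paper has in mind (the paper omits this proof, stating it is "essentially the same as the circle case"): symmetry in $x$ kills the mixed terms, Gaussian exactness up to degree $2n-1$ handles the diagonal blocks, and the $k=\ell=n$ case is settled by noting the nodes $t_j$ are the zeros of $p_n(w_{-\f12})$, lie in the open interior of the support (so $t_j>0$), and cannot all be zeros of the degree-$(n-1)$ polynomial $p_{n-1}(w_{\f12})$. The only nit is that your identification $2\sum_j w_j p_k p_\ell = \la Y_{k,1},Y_{\ell,1}\ra$ is off by a factor of $2$ (the continuous parabola norm is $h_k(w_{-\f12})$, not $2h_k(w_{-\f12})$), but this is immaterial to the orthogonality claim.
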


\begin{cor}[Hyperbola, two branches]\label{cor:twocutinterp}
Let $\{t_j\}_{j=1}^n$ and $\{w_j\}_{j=1}^n$ be the $n$-point Gaussian quadrature nodes and weights with respect to $w_0$ defined on ${\mathbb R}^+$, as in Theorem~\ref{thm:TwoBranch}. Then the $2n$ polynomials $\{Y_{j,1}\}_{j=0}^{n-1}, \{Y_{j,2}\}_{j=1}^n$ are orthogonal with respect to the discrete inner product
$$
\la f,g\ra_n := \sum_{j=1}^n w_j \left[f(x_j, y_j)  g(x_j, y_j) +  f(-x_j, y_j)  g(-x_j, y_j)\right],
$$
where $x_j = \sqrt{t_j^2+1}$ and $y_j = t_j$.
\end{cor}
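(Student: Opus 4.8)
The plan is to mirror, essentially line for line, the proof of Proposition~\ref{prop:circle-interp1}, replacing the circle data by the two-branch hyperbola data of Theorem~\ref{thm:TwoBranch}. Write $w$ for the weight on $\RR$ there, so that $t_1,\dots,t_n$ are the zeros of $p_n(w)$, the $w_j>0$ are the associated Christoffel numbers, and $w_1(t)=(1+t^2)w(t)$; recall $Y_{k,1}(x,y)=p_k(w;y)$, $Y_{k,2}(x,y)=x\,p_{k-1}(w_1;y)$, and that the sample points are $(x_j,y_j)$ and $(-x_j,y_j)$ with $x_j=\sqrt{t_j^2+1}$, $y_j=t_j$. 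First I would dispose of the mixed pairs: $Y_{k,1}$ does not depend on $x$ while $Y_{\ell,2}$ is odd in $x$, so the contributions of $(x_j,y_j)$ and $(-x_j,y_j)$ to $\la Y_{k,1},Y_{\ell,2}\ra_n$ cancel term by term, giving $\la Y_{k,1},Y_{\ell,2}\ra_n=0$ for all admissible $k,\ell$; this is the discrete shadow of the parity argument used in the proof of Theorem~\ref{thm:TwoBranch}.

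Next I would evaluate the two diagonal families, using that the $n$-point Gauss rule for $w$ integrates exactly every polynomial of degree $\le 2n-1$. For $0\le k,\ell\le n-1$ the two sample points contribute equally, and since $\deg\bigl(p_k(w)\,p_\ell(w)\bigr)\le 2n-2$,
$$
\la Y_{k,1},Y_{\ell,1}\ra_n = 2\sum_{j=1}^n w_j\,p_k(w;t_j)\,p_\ell(w;t_j) = 2\int_\RR p_k(w)\,p_\ell(w)\,w\,dt = 2h_k(w)\,\delta_{k,\ell},
$$
in agreement with \eqref{eq:norm-hyper}. For $1\le k,\ell\le n$ with $(k,\ell)\neq(n,n)$ one has $k+\ell\le 2n-1$, so $(1+t^2)\,p_{k-1}(w_1;t)\,p_{\ell-1}(w_1;t)$ has degree $\le 2n-1$ and Gauss exactness gives
$$
\la Y_{k,2},Y_{\ell,2}\ra_n = 2\sum_{j=1}^n w_j\,(1+t_j^2)\,p_{k-1}(w_1;t_j)\,p_{\ell-1}(w_1;t_j) = 2\int_\RR p_{k-1}(w_1)\,p_{\ell-1}(w_1)\,w_1\,dt = 2h_{k-1}(w_1)\,\delta_{k,\ell}.
$$
Hence all $2n$ polynomials are pairwise orthogonal for $\la\cdot,\cdot\ra_n$ except possibly the single pair $(Y_{n,2},Y_{n,2})$, and every diagonal entry computed so far is strictly positive.

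The remaining case $k=\ell=n$ is the one place where exactness fails, since $(1+t^2)\,p_{n-1}(w_1;t)^2$ has degree exactly $2n$; it is also the reason the spanning set carries $Y_{n,2}$ rather than $Y_{n,1}$, for $y_j=t_j$ is a zero of $p_n(w)$ and hence $\la Y_{n,1},Y_{n,1}\ra_n=0$. Here I would argue directly, exactly as on the circle: $\la Y_{n,2},Y_{n,2}\ra_n=2\sum_{j=1}^n w_j\,(1+t_j^2)\,p_{n-1}(w_1;t_j)^2\ge 0$ because every $w_j>0$ and $1+t_j^2>0$, and equality would force the degree-$(n-1)$ polynomial $p_{n-1}(w_1)$ to vanish at all $n$ distinct nodes $t_j$, which is impossible. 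Thus $\la Y_{n,2},Y_{n,2}\ra_n>0$. Combining the three steps, $\{Y_{j,1}\}_{j=0}^{n-1}\cup\{Y_{j,2}\}_{j=1}^{n}$ is $\la\cdot,\cdot\ra_n$-orthogonal with nonvanishing diagonal, which (after listing the $2n$ points $(\pm x_j,y_j)$ with weights $w_j$) is exactly the hypothesis of Proposition~\ref{prop:interpolation} with $M=2n$.

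I expect the only genuine obstacle to be this borderline degree count in the self-product of $Y_{n,2}$: $n$-point Gaussian quadrature is exact only up to degree $2n-1$ while $(1+t^2)\,p_{n-1}(w_1)^2$ sits at degree $2n$, so that one entry cannot be read off from exactness and must be handled by the positivity argument instead. Everything else is a mechanical substitution of the identity $x_j^2=1+t_j^2$ for the identity $y_j^2=1-t_j^2$ used in the circle proof.
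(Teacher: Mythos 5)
Your proposal is correct and follows exactly the route the paper intends: the paper omits this proof, stating it is "essentially the same as the circle case" (Proposition~\ref{prop:circle-interp1}), and your argument is precisely that translation — parity in $x$ for the mixed pairs, Gauss exactness up to degree $2n-1$ for all diagonal pairs except $(Y_{n,2},Y_{n,2})$, and a direct positivity argument for that last borderline case. Your observation that $\la Y_{n,1},Y_{n,1}\ra_n=0$ explains the choice of $Y_{n,2}$ over $Y_{n,1}$ in the basis is also the same remark made in the circle proof.
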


\begin{cor}[Hyperbola, one branch]\label{prop:hypbola-interp1}
Let $\{t_j\}_{j=1}^n$ and $\{w_j\}_{j=1}^n$ be the $n$-point Gaussian quadrature nodes and weights with respect to $w_0$ defined on ${\mathbb R}_+$, as in Theorem~\ref{thm:OneBranch}. Then the $2n$ polynomials $\{Y_{j,1}\}_{j=0}^{n-1}, \{Y_{j,2}\}_{j=1}^n$ are orthogonal with respect to the discrete inner product
$$
\la f,g\ra_n := \sum_{j=1}^n w_j \left[f(x_j, y_j)  g(x_j, y_j) +  f(x_j, -y_j)  g(x_j, -y_j)\right],
$$
where $x_j = t_j$ and $y_j = \sqrt{t_j^2+1}$.
\end{cor}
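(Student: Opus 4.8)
The plan is to imitate, essentially line for line, the argument given for the circle in Proposition~\ref{prop:circle-interp1}, now using the basis furnished by Theorem~\ref{thm:OneBranch}, namely $Y_{k,1}(x,y)=p_k(w_0;x)$ and $Y_{k,2}(x,y)=y\,p_{k-1}(w_1;x)$ with $w_1(t)=(t^2-1)w_0(t)$, together with the nodes $(t_j,\pm y_j)$ on the branch, where the $t_j$ are the zeros of $p_n(w_0)$ and $y_j^2=t_j^2-1$. The first step is to dispose of the mixed inner products: since the discrete form symmetrises the evaluation under $y\mapsto -y$, while $Y_{k,1}$ does not depend on $y$ and $Y_{\ell,2}$ is odd in $y$, the contributions of $(t_j,y_j)$ and $(t_j,-y_j)$ cancel termwise, so $\la Y_{k,1},Y_{\ell,2}\ra_n=0$ for all admissible $k,\ell$.

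Next I would treat the two like-type blocks by invoking exactness of the $n$-point Gaussian quadrature for $w_0$ on polynomials of degree at most $2n-1$. For $0\le k,\ell\le n-1$,
$$
\la Y_{k,1},Y_{\ell,1}\ra_n = 2\sum_{j=1}^n w_j\,p_k(w_0;t_j)\,p_\ell(w_0;t_j) = 2\int p_k(w_0;x)\,p_\ell(w_0;x)\,w_0(x)\,dx = 2h_k(w_0)\,\delta_{k,\ell},
$$
since $\deg(p_k p_\ell)\le 2n-2$; this reproduces the continuous norms computed in the proof of Theorem~\ref{thm:OneBranch}. For the second block, $\la Y_{k,2},Y_{\ell,2}\ra_n = 2\sum_{j=1}^n w_j\,(t_j^2-1)\,p_{k-1}(w_1;t_j)\,p_{\ell-1}(w_1;t_j)$, and whenever $1\le k,\ell\le n$ with $(k,\ell)\ne(n,n)$ the polynomial $(x^2-1)\,p_{k-1}(w_1;x)\,p_{\ell-1}(w_1;x)$ has degree $k+\ell\le 2n-1$, so the rule is again exact and the sum equals $2\int p_{k-1}(w_1;x)\,p_{\ell-1}(w_1;x)\,w_1(x)\,dx = 2h_{k-1}(w_1)\,\delta_{k,\ell}$.

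The one pair outside the reach of exactness is $k=\ell=n$, where the integrand has degree $2n$; showing $\la Y_{n,2},Y_{n,2}\ra_n>0$ is the step I expect to be the crux, and I would argue exactly as in the circle case. The Gaussian weights $w_j$ are all strictly positive; the zeros $t_j$ of $p_n(w_0)$ lie in the interior of the convex hull of the support of $w_0$, which is contained in $[1,\infty)$, so $t_j^2-1>0$ for every $j$; and $p_{n-1}(w_1;\cdot)$, being of degree $n-1$, cannot vanish at all $n$ of the points $t_j$. Hence every summand of $2\sum_{j=1}^n w_j\,(t_j^2-1)\,p_{n-1}(w_1;t_j)^2$ is nonnegative and at least one is strictly positive, which gives nondegeneracy on the chosen basis. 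Combining these three steps shows that $\{Y_{j,1}\}_{j=0}^{n-1}\cup\{Y_{j,2}\}_{j=1}^{n}$ is orthogonal for $\la\cdot,\cdot\ra_n$, and feeding this into Proposition~\ref{prop:interpolation} yields the associated interpolation statement. The companion corollaries for the parabola and the two-branch hyperbola are proved identically, the only change being which coordinate carries the quadrature nodes; the sole genuine obstacle, in every case, is the degree-$2n$ diagonal term treated above.
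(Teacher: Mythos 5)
Your argument is correct and is precisely the adaptation of the circle-case proof (Proposition~\ref{prop:circle-interp1}) that the paper has in mind when it omits this proof as ``essentially the same'': the mixed terms vanish by the oddness of $Y_{\ell,2}$ in $y$, the like-type blocks of degree at most $2n-1$ follow from exactness of Gaussian quadrature, and the one delicate degree-$2n$ diagonal term $\la Y_{n,2},Y_{n,2}\ra_n$ is handled, as in the circle case, by positivity of the weights, $t_j>1$, and the fact that $p_{n-1}(w_1)$ cannot vanish at all $n$ nodes. Note that you have (correctly) taken $y_j=\sqrt{t_j^2-1}$, which is what the one-branch hyperbola $x^2-y^2=1$ requires and what makes $y_j^2\,w_0$ match $w_1=(t^2-1)w_0$; the $\sqrt{t_j^2+1}$ appearing in the statement is a typo.
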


\begin{cor}[Intersecting lines]\label{cor:intersectinglinesquad}
Let $\{t_j\}_{j=1}^n$ and $\{w_j\}_{j=1}^n$ be the $n$-point Gaussian quadrature nodes and weights with respect to $w$ defined on ${\mathbb R}$, as in Theorem~\ref{thm:OP-lines}. Then the $2n$ polynomials $\{Y_{j,1}\}_{j=0}^{n-1}, \{Y_{j,2}\}_{j=1}^n$ are orthogonal with respect to the discrete inner product
$$
\la f,g\ra_n := \sum_{j=1}^n w_j \left[f(t_j, 0)  g(t_j, 0) +  f(0, t_j)  g(0, t_j)\right].
$$
\end{cor}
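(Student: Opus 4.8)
The plan is to mimic the proof of Proposition~\ref{prop:circle-interp1} verbatim, since the intersecting-lines case is actually \emph{simpler}: there is no auxiliary algebraic relation between $x$ and $y$ to exploit (the two pieces of $\Omega$ live on the coordinate axes), and the basis from Theorem~\ref{thm:OP-lines} splits cleanly into an $x$-supported part and a $y$-supported part. First I would record the discrete inner product explicitly and observe that, by the very form of $Y_{n,1}(x,y)=p_n(w;x)+p_n(w;y)-p_n(w;0)$ and $Y_{n,2}(x,y)=xp_{n-1}(w_1;x)-yp_{n-1}(w_1;y)$, evaluating at the grid points $(t_j,0)$ and $(0,t_j)$ one gets
\begin{align*}
  Y_{k,1}(t_j,0) &= p_k(w;t_j), & Y_{k,1}(0,t_j) &= p_k(w;t_j),\\
  Y_{k,2}(t_j,0) &= t_j\,p_{k-1}(w_1;t_j), & Y_{k,2}(0,t_j) &= -t_j\,p_{k-1}(w_1;t_j),
\end{align*}
using $p_k(w;0)$ cancelling in the first line because each grid point has a zero coordinate. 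Consequently $\la Y_{k,1},Y_{\ell,2}\ra_n = \sum_j w_j\big[p_k(w;t_j)t_j p_{\ell-1}(w_1;t_j) - p_k(w;t_j)t_j p_{\ell-1}(w_1;t_j)\big]=0$ for all $k,\ell$, which handles cross-orthogonality by the same sign cancellation used on the circle.

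Next I would treat the diagonal blocks. For $k,\ell\le n-1$,
\[
  \la Y_{k,1},Y_{\ell,1}\ra_n = 2\sum_{j=1}^n w_j\, p_k(w;t_j)\,p_\ell(w;t_j) = 2\int_\RR p_k(w;x)p_\ell(w;x)\,w(x)\,dx = \la Y_{k,1},Y_{\ell,1}\ra,
\]
since $p_k p_\ell$ has degree $\le 2n-2$ and the $n$-point Gaussian rule for $w$ is exact up to degree $2n-1$; by Theorem~\ref{thm:OP-lines} this equals $2h_k(w)\delta_{k,\ell}$. Likewise for $k,\ell\le n$,
\[
  \la Y_{k,2},Y_{\ell,2}\ra_n = 2\sum_{j=1}^n w_j\, t_j^2\, p_{k-1}(w_1;t_j)\,p_{\ell-1}(w_1;t_j),
\]
and when $k,\ell\le n-1$ the integrand $x^2 p_{k-1}(w_1;x)p_{\ell-1}(w_1;x) = p_{k-1}(w_1;x)p_{\ell-1}(w_1;x)\cdot x^2$ has degree $\le 2n-2$, so again the quadrature is exact and this reproduces $\la Y_{k,2},Y_{\ell,2}\ra = 2h_{k-1}(w_1)\delta_{k,\ell}$.

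The one case needing a separate argument is $k=\ell=n$ for the $Y_{n,2}$ block: here $x^2 p_{n-1}(w_1;x)^2$ has degree $2n$, one beyond exactness, so I cannot simply invoke the continuous pairing. Instead I argue directly that $\la Y_{n,2},Y_{n,2}\ra_n = 2\sum_j w_j t_j^2 p_{n-1}(w_1;t_j)^2 \neq 0$: the weights $w_j$ are positive, and the sum would vanish only if $t_j^2 p_{n-1}(w_1;t_j)^2=0$ for every $j$; if $0$ is not among the Gaussian nodes $\{t_j\}$ (which holds generically, e.g.\ for $w$ even and $n$ even, or can be noted as a standing assumption as in the circle proof where the analogous role is played by $\pm1\notin\{t_j\}$), this would force $p_{n-1}(w_1;\cdot)$ to have $n$ zeros, contradicting its degree $n-1$. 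This positivity-of-a-square-sum step is the only real obstacle, and it is mild; everything else is degree-counting plus the sign-cancellation already exploited in Proposition~\ref{prop:circle-interp1}. I would close by noting that $Y_{n,1}$ is excluded from the interpolation basis precisely because $\la Y_{n,1},Y_{n,1}\ra_n$ need not be controlled, exactly as on the circle, so the $2n$ polynomials $\{Y_{j,1}\}_{j=0}^{n-1}\cup\{Y_{j,2}\}_{j=1}^n$ form the orthogonal system feeding Proposition~\ref{prop:interpolation}.
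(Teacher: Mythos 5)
Your proposal is correct and follows essentially the same route the paper intends: the corollary's proof is omitted as ``essentially the same as the circle case,'' and your evaluations $Y_{k,1}(t_j,0)=Y_{k,1}(0,t_j)=p_k(w;t_j)$ and $Y_{k,2}(t_j,0)=-Y_{k,2}(0,t_j)=t_j\,p_{k-1}(w_1;t_j)$, the exact sign cancellation for the cross terms, and the degree count invoking exactness of the $n$-point Gaussian rule up to degree $2n-1$ are precisely the intended adaptation of Proposition~\ref{prop:circle-interp1}. One remark: your caveat on the final non-degeneracy step $\la Y_{n,2},Y_{n,2}\ra_n\ne 0$ is well taken and in fact sharper than the paper's implicit claim, since on the circle the excluded points $\pm1$ can never be Gaussian nodes, whereas here $0$ is a node of $p_n(w)$ whenever $w$ is even and $n$ is odd; in that case $x\,p_{n-1}(w_1;x)$ is orthogonal to $\Pi_{n-1}$ with respect to $w$ (by parity for the constant and by absorbing $x^2$ into $w_1$ otherwise), hence proportional to $p_n(w;x)$, so every term $w_jt_j^2p_{n-1}(w_1;t_j)^2$ vanishes and the discrete norm of $Y_{n,2}$ is genuinely zero --- so an explicit hypothesis excluding $0$ from the nodes (or a different choice of the retained degree-$n$ polynomial) is needed before feeding this corollary into Proposition~\ref{prop:interpolation}.
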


\begin{cor}[Parallel lines]\label{prop:parallel-interp1}
Let $\{t_j\}_{j=1}^n$ and $\{w_j\}_{j=1}^n$ be the $n$-point Gaussian quadrature nodes and weights with respect to $w$ defined on ${\mathbb R}$, as in Theorem~\ref{thm:ParellelLines}. Then the $2n$ polynomials $\{Y_{j,1}\}_{j=0}^{n-1}, \{Y_{j,2}\}_{j=1}^n$ are orthogonal with respect to the discrete inner product
$$
\la f,g\ra_n := \sum_{j=1}^n w_j \left[f(t_j, -1)  g(t_j, -1) +  f(t_j, 1)  g(t_j, 1)\right].
$$
\end{cor}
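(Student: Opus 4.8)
The plan is to mirror the proof of the circle case (Proposition~\ref{prop:circle-interp1}) verbatim, since the parallel-lines geometry is if anything simpler: there is no square-root parametrization, the two ``branches'' are literally the lines $y=1$ and $y=-1$, and the basis from Theorem~\ref{thm:ParellelLines} is $Y_{n,1}(x,y)=p_n(w;x)$ and $Y_{n,2}(x,y)=y\,p_{n-1}(w_1;x)$ with $w_1(x)=x^2w(x)$. First I would record that the discrete inner product $\la f,g\ra_n=\sum_{j=1}^n w_j[f(t_j,-1)g(t_j,-1)+f(t_j,1)g(t_j,1)]$ is exactly the $n$-point Gaussian quadrature for $w$ applied on each line and summed, so that for any polynomials $q,r$ in $x$ of combined degree $\le 2n-1$ one has $\sum_{j=1}^n w_j[q(t_j)r(t_j)+q(t_j)r(t_j)]=2\int_\RR q(x)r(x)w(x)dx$, and likewise $\sum_j w_j\cdot 2\,q(t_j)r(t_j)\cdot 1 = 2\int q r\,w$.

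Second, I would dispose of the mixed pairs: $Y_{k,1}(t_j,\pm1)Y_{\ell,2}(t_j,\pm1)=(\pm1)\,p_k(w;t_j)p_{\ell-1}(w_1;t_j)$, and since the two branches carry $y=+1$ and $y=-1$ with the same weight $w_j$, the two contributions cancel, giving $\la Y_{k,1},Y_{\ell,2}\ra_n=0$ for every $k,\ell$. Third, for $0\le k,\ell\le n-1$ the product $p_k(w;x)p_\ell(w;x)$ has degree $\le 2n-2<2n-1$, so exactness of Gaussian quadrature gives $\la Y_{k,1},Y_{\ell,1}\ra_n=2\int_\RR p_k(w;x)p_\ell(w;x)w(x)dx=2h_k(w)\delta_{k,\ell}$; likewise for $1\le k,\ell\le n$ the product $p_{k-1}(w_1;x)p_{\ell-1}(w_1;x)x^2$ has degree $\le 2n-2\le 2n-1$, and using $y^2=1$ on each line together with exactness, $\la Y_{k,2},Y_{\ell,2}\ra_n=\sum_j w_j\cdot 2\,t_j^2 p_{k-1}(w_1;t_j)p_{\ell-1}(w_1;t_j)=2\int_\RR p_{k-1}(w_1;x)p_{\ell-1}(w_1;x)x^2w(x)dx=2h_{k-1}(w_1)\delta_{k,\ell}$, where I use $w_1=x^2w$. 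This simultaneously establishes orthogonality and non-vanishing of the diagonal terms, since $h_k(w),h_{k-1}(w_1)>0$.

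The one point that needs a moment's care — the analogue of the ``$k=\ell=n$'' remark in the circle proof — is that here the top polynomials $Y_{n,1}$ and $Y_{n,2}$ do \emph{not} both appear in the claimed orthogonal set: the statement lists $\{Y_{j,1}\}_{j=0}^{n-1}$ and $\{Y_{j,2}\}_{j=1}^{n}$, so the highest-degree $Y_{n,1}$ is excluded and only the diagonal entry $\la Y_{n,2},Y_{n,2}\ra_n$ at degree exactly $n$ must be checked to be nonzero. That entry equals $2\sum_j w_j t_j^2 p_{n-1}(w_1;t_j)^2$, and since the $t_j$ are the zeros of $p_n(w)$ — not of $p_{n-1}(w_1)$ — and $w_j>0$, while $t_j^2 p_{n-1}(w_1;t_j)^2$ cannot vanish identically at all $n$ nodes (otherwise $p_{n-1}(w_1;x)$, a polynomial of degree $n-1$, would vanish at the $n$ distinct points $t_j$ unless some $t_j=0$; and even if one node is $0$, $p_{n-1}(w_1)$ would still vanish at the remaining $n-1$ points, forcing $p_{n-1}(w_1)\equiv 0$), the sum is strictly positive. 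I expect this last non-degeneracy check to be the only genuine obstacle; everything else is a transcription of the circle argument with $w_{-\f12}\rightsquigarrow w$ and $w_{\f12}\rightsquigarrow w_1$, and since the paper explicitly says ``We omit the proofs as they are essentially the same as the circle case,'' the write-up can be kept very short or omitted entirely.
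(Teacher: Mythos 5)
Your overall strategy (transcribe the circle proof, using exactness of the $n$-point Gauss rule for $w$ up to degree $2n-1$ and the $y\mapsto -y$ symmetry to kill the mixed terms) is the right one, and your treatment of $\la Y_{k,1},Y_{\ell,1}\ra_n$ and of the mixed pairs is correct. But your computation of $\la Y_{k,2},Y_{\ell,2}\ra_n$ contains a genuine error: since $Y_{k,2}(x,y)=y\,p_{k-1}(w_1;x)$ and $y=\pm1$ on the two lines, one has $Y_{k,2}(t_j,\pm1)Y_{\ell,2}(t_j,\pm1)=p_{k-1}(w_1;t_j)\,p_{\ell-1}(w_1;t_j)$ with \emph{no} factor $t_j^2$; the factor you insert has no source (you appear to have imported the $(1-t_j^2)$ of the circle case, where it arises from $y^2=1-x^2$, but here $y^2=1$ contributes nothing). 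Without that factor the quadrature sum reproduces $2\int_\RR p_{k-1}(w_1;x)p_{\ell-1}(w_1;x)\,w(x)\,dx$, an integral against $w$ rather than against $w_1=x^2w$, and for $w_1=x^2w$ this is \emph{not} diagonal: for instance with $w=e^{-x^2}$ one finds $p_2(w_1;x)=x^2-\tfrac32$ and $\int p_0(w_1)p_2(w_1)\,w\,dx=-\sqrt{\pi}\neq0$. So as written the argument does not establish the claimed orthogonality, and in fact it cannot, because the claim is false for the basis of Theorem~\ref{thm:ParellelLines} taken literally.

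What your attempt actually uncovers is a typo in Theorem~\ref{thm:ParellelLines}: the definition $w_1(x)=x^2w(x)$ is carried over from the intersecting-lines section, but on the parallel lines the multiplier $y$ squares to $1$, so the correct second family is $Y_{n,2}(x,y)=y\,p_{n-1}(w;x)$ --- consistent with that theorem's own proof (which invokes only the orthogonality of $p_n(w_0)$, i.e.\ of $p_n(w)$) and with the Hermite example $yH_{n-1}(x)$. With this correction the corollary becomes immediate and is strictly easier than the circle case: every surviving pairing is the $n$-point Gauss rule for $w$ applied to $2p_k(w)p_\ell(w)$ or $2p_{k-1}(w)p_{\ell-1}(w)$, of degree at most $2n-2$, so exactness applies throughout; in particular the top entry $\la Y_{n,2},Y_{n,2}\ra_n=2\sum_j w_j\,p_{n-1}(w;t_j)^2=2h_{n-1}(w)>0$ needs no separate non-vanishing argument, and the ``only genuine obstacle'' you single out at $k=\ell=n$ disappears.
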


\section{Applications}

We now discuss three applications of orthogonal polynomials on quadratic curves. The first section makes connections with known results for the Fourier extension problem. The second section shows that the approximation of univariate functions with certain singularities or near-singularities can be recast as interpolation of non-singular functions on quadratic curves, leading to robust and accurate approximation schemes. The last section applies this construction to solving differential equations with nearly singular variable coefficients, in particular, Schr\"odinger's equation. 

\subsection{The Fourier extension problem}

Our first example considers the Fourier extension problem, that is, approximating $f(\theta)$ by a Fourier series
$$
f(\theta) \approx c_0 + \sum_{k=1}^n (c_k \cos k \theta + s_k \sin k \theta)
$$
where we are only allowed to sample $f$ in a sub-interval of $[-\pi,\pi)$, without loss of generality, the interval $[-h,h]$. Usually, it is assumed that the samples of $f$ are at evenly spaced points, but we consider a slightly more general problem where we are allowed to sample wherever we wish provided they are restricted to $[-h,h]$. Extensive work on the Fourier extension problem exists including fast and robust algorithms, see the recent work of Matthysen and Huybrechs \cite{MH} and references therein. 

As noted by Huybrechs in \cite{H}, the Fourier extension problem is closely related to expansion in orthogonal polynomials on an arc if we let $x = \cos \theta$ and $y = \sin \theta$. In fact, Huybrechs constructed quadrature schemes for determining these coefficients numerically, which are very close to the proposed interpolation method above. Here we make the minor observation that via the construction in Proposition~\ref{prop:circle-interp1} we can guarantee this expansion interpolates. This presents an attractive scheme for automatic determination of minimal degree needed for approximation, at which point the methods proposed in \cite{MH,H} can be used to re-expand in classical Fourier series.

\subsection{Interpolation of functions with  singularities}

We now consider an application of orthogonal polynomials on hyperbolic curves: approximation of (nearly-)singular functions via interpolation.  We consider the following three cases:
\begin{align*}
f(t) & = f(t, |t|), \\
f(t) & = f(t, \sqrt{t^2 + \epsilon^2}), \\
f(t) & = f(t, 1/t), 
\end{align*}
where $f(x,y)$ is a function smooth in $x$ and $y$, defined on the quadratic curves $y = x^2$, $y^2 =  x^2 + \epsilon^2$ or $xy = 1$, respectively. Thus instead of attempting to approximate the (nearly-)singular function $f(x)$ on the real line, we will approximate the non-singular function $f(x,y)$ on the appropriate quadratic curve via interpolation by orthogonal polynomials in two variables, as in the preceding section. This leads to faster convergence than conventional polynomial interpolation.

\subsubsection{Example 1: square-root singularity}

Consider functions with singularities like $\sqrt{t^2+\epsilon^2}$, for example
$$
f(t) = \sin(10t + 20\sqrt{t^2 + \epsilon^2})
$$
on the interval $-1 \leq t \leq 1$. 
We project $f(t)$ to the standard hyperbola using 
$$
f(x,y) = f(\epsilon y) = \sin(10 \epsilon  y + 20 \epsilon  x)
$$
on the one-branch hyperbola $x^2 = y^2 + 1$, $x > 1$ and $y \in [-\epsilon^{-1},\epsilon^{-1}]$ via the change of variables $t = y$ and $x = \sqrt{t^2 + \epsilon^2}$.

By Theorem~\ref{thm:OneBranch}, we can represent orthogonal polynomials in terms of univariate orthogonal polynomials. For concreteness, we choose: $w_0(t) =1$ and hence $w_1(t) = t^2-1$, where $1 \leq t \leq L$ for $L=\sqrt{1+1/\epsilon^2}$.  Here $w_0(t)$ is the Legendre weight, hence we have
$$
Y_{n,1}(x,y) = p_n(w_0; x) = P_n\left(2 {x-1 \over L-1} -1\right)
$$
On the other hand, $w_1(t)$ is not a classical weight but we can construct its orthogonal polynomials numerically via the Stieltjes procedure \cite[Section 2.2.3]{Gautschi}; indeed, because the weight itself is polynomial, the procedure is exact provided a sufficient number of quadrature points are used to discretize the inner product. This step needs to be repeated for each $\epsilon$ as the interval length depends on $\epsilon$. 

 Following the previous section, we find the coefficients of the polynomial $f_n(x,y)$ that interpolates $f(x,y)$ at the  $2n$ points $(x_j,\pm y_j)$, where    $y_j = \sqrt{x_j^2 - 1}$ and $x_j$ are the Gauss--Legendre points on the interval $[1,L]$. It follows that
$$
f_n(t) = f_n\left( \sqrt{1 + {t^2 \over \epsilon^2}}, {t \over \epsilon} \right)
$$
interpolates $f(t)$ at the points $\pm \epsilon y_j$.

\begin{figure}
 \begin{center}
 \includegraphics[width=.9\textwidth]{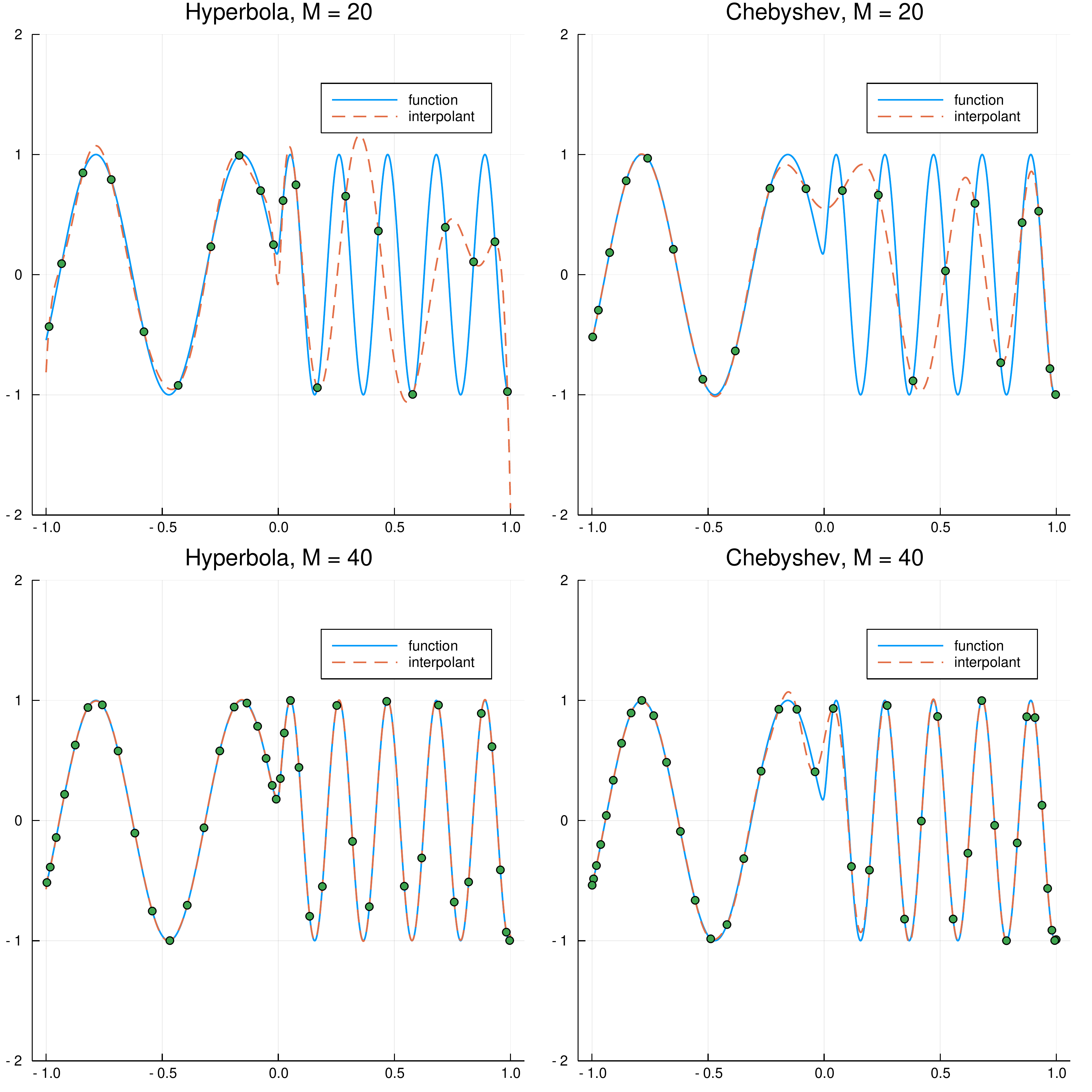}
  \caption{The $M = 2n = 20$ and $M= 2n = 40$ point interpolant of $\sin(10t + 20\sqrt{t^2 + \epsilon^2})$, for $\epsilon = 0.01$, comparing orthogonal polynomials on the hyperbola (left) to Chebyshev interpolation (right).    }\label{fig:hyperbola}
 \end{center}
\end{figure}

\begin{figure}
 \begin{center}
 \includegraphics[width=.9\textwidth]{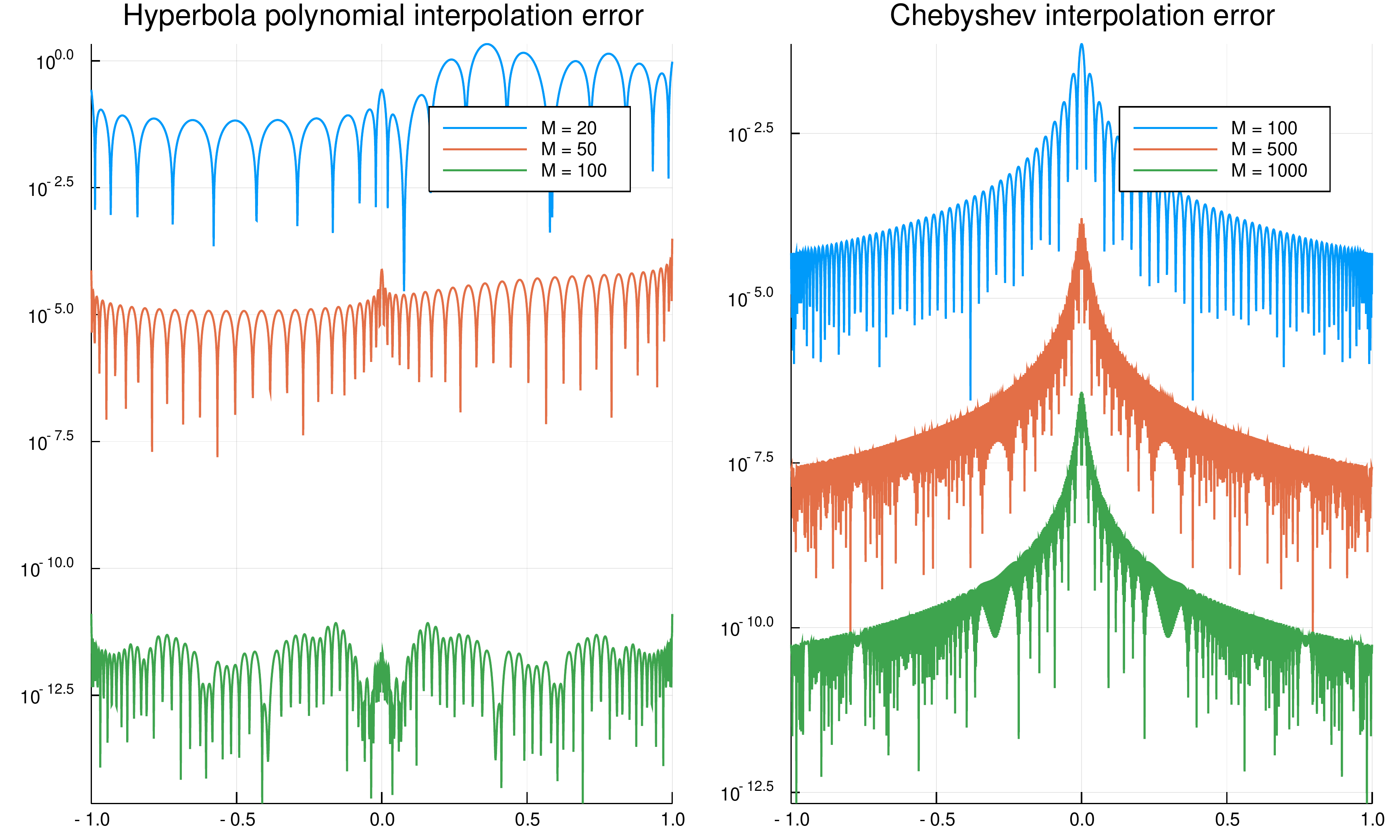}
  \caption{The convergence of the interpolant of $\sin(10t + 20\sqrt{t^2 + \epsilon^2})$, for $\epsilon = 0.01$ for increasing number of interpolation points $M$, comparing orthogonal polynomials on the hyperbola (left) to Chebyshev interpolation (right).    }\label{fig:hyperbolaerror}
 \end{center}
\end{figure}

In Figure~\ref{fig:hyperbola} we plot the interpolant for $20$ and $40$ interpolation points, for $\epsilon = 0.01$. In Figure~\ref{fig:hyperbolaerror} we plot the pointwise  error.  We include the Chebyshev interpolant for comparison: the approximation built from orthogonal polynomials on the hyperbola converges rapidly, despite the almost-singularity inside the domain, whereas the Chebyshev interpolant requires significantly more points to achieve the same accuracy.

\begin{figure}
 \begin{center}
 \includegraphics[width=.8\textwidth]{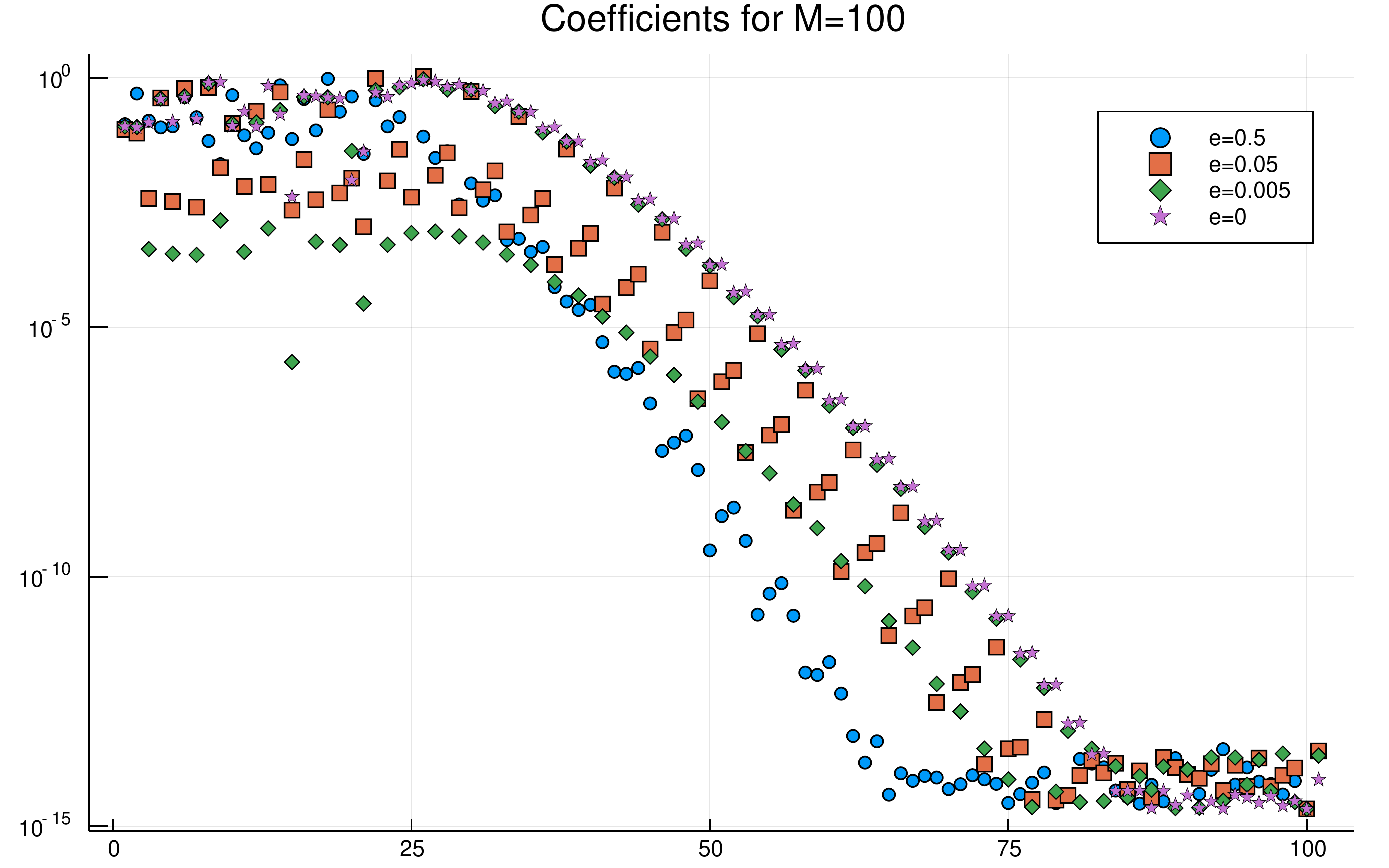}
  \caption{The calculated coefficients of the interpolating polynomial on the hyperbola for $100$ interpolation points and $\epsilon = 0.5, 0.05, 0.005, 0$.  The number of interpolation points is chosen sufficiently large that the interpolation coefficients approximate the true coefficients to roughly machine accuracy.   }\label{fig:hyperbolacfs}
 \end{center}
\end{figure}

In Figure~\ref{fig:hyperbolacfs} we plot the  coefficients of the interpolating polynomial for decreasing values of $\epsilon$.  We include $\epsilon = 0$: this may appear to be a degenerate limit as $L \rightarrow \infty$, but under an affine change of variables we see that the weights tend to the intersecting lines case with $w(t) = 1$ for $0 \leq t \leq 1$ in a continuous fashion, hence we can employ the construction in Corollary~\ref{cor:intersectinglinesquad}. We observe  super-exponential convergence for each value of $\epsilon$, with a rate uniformly bounded as $\epsilon \rightarrow 0$. This compares favourably with  polynomial interpolation at Chebyshev points,  which degenerates to slow algebraic convergence as $\epsilon \rightarrow 0$.

\begin{figure}
 \begin{center}
 \includegraphics[width=.7\textwidth]{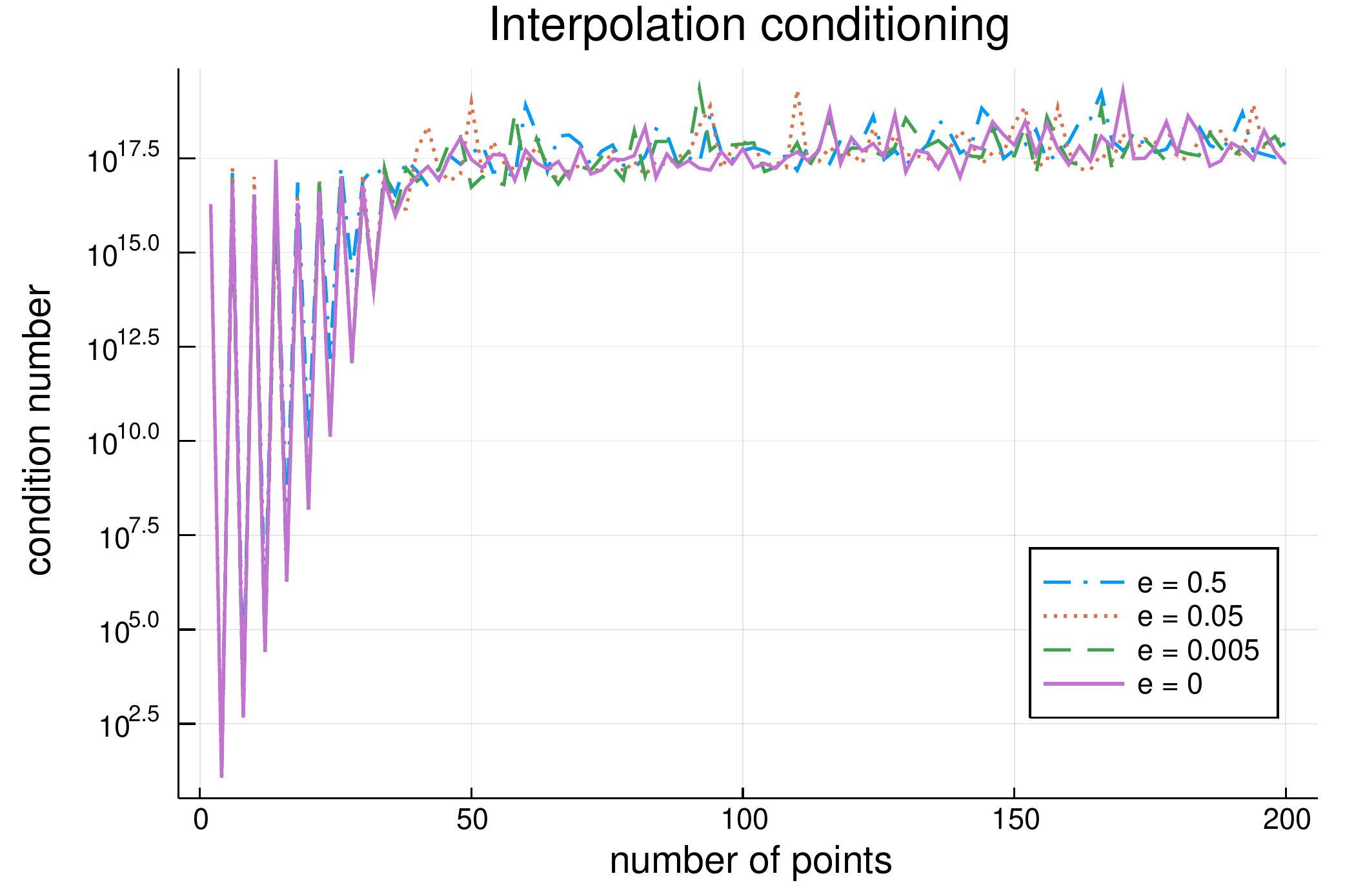}  \includegraphics[width=.7\textwidth]{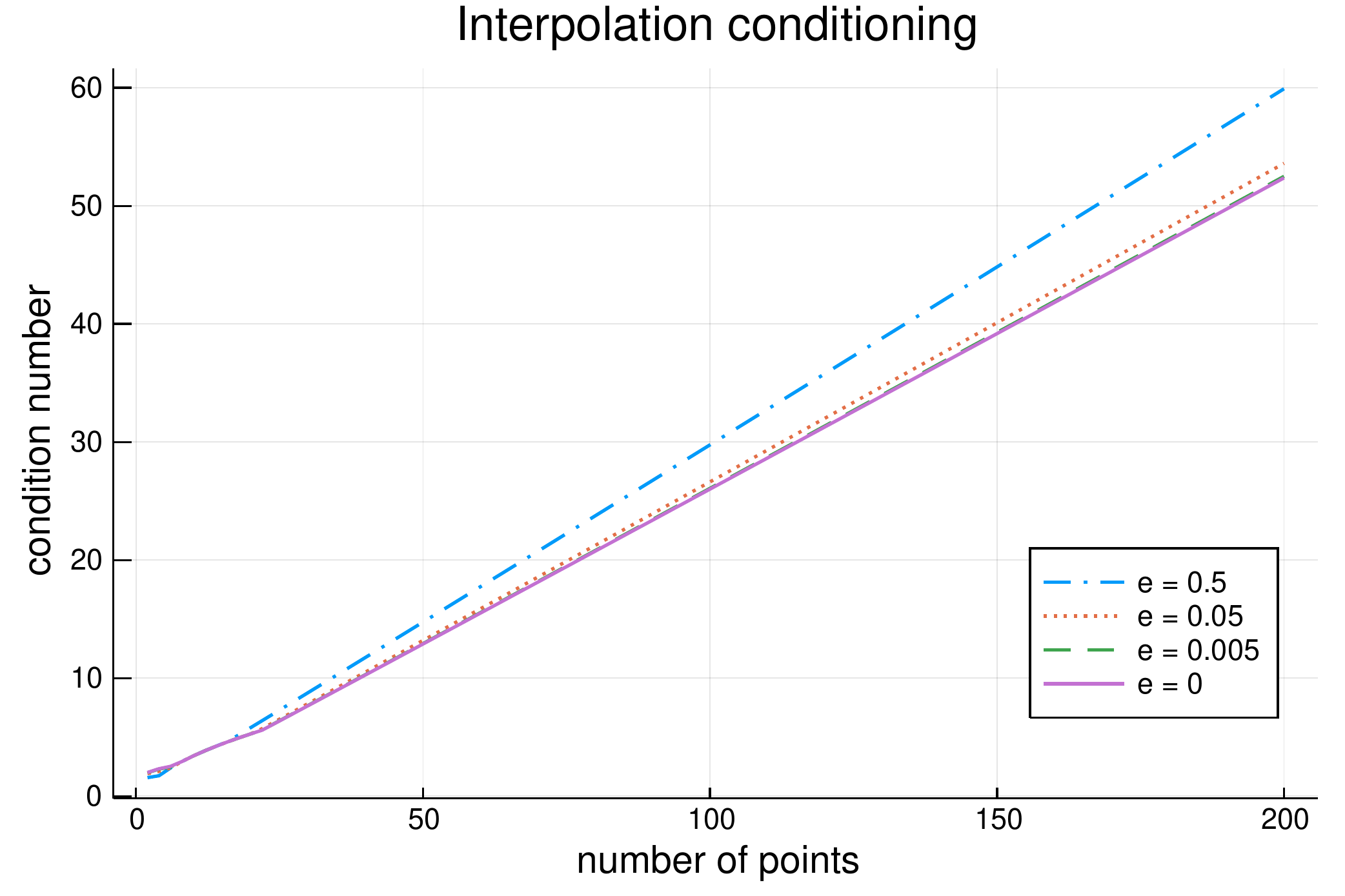}
  \caption{The condition number of the interpolation (Vandermonde) matrix associated to a na\"\i ve basis consisting of $\{ T_j(t) \}_{j=0}^{n-1}$ and $\{\sqrt{t^2 + \epsilon^2} T_j(t) \}_{j=0}^{n-1}$ at $2n$ Chebyshev points (left, log-scale) compared to the interpolation (Vandermonde) matrix using OPs on the hyperbola at the associated $2n$ interpolation points (right, standar-scale) for  $\epsilon = 0.5, 0.05, 0.005, 0$.  The proposed scheme has only linear growth in condition number, compared to exponential growth for the na\"\i ve basis.}\label{fig:cond}
 \end{center}
\end{figure}

Note that there is an alternative  approach of approximating the solution using standard bases, for example, we 
could na\"\i vely write
$$
f(t) \approx \sum_{k = 0}^{n-1} f_k^1 T_k(t)  + \sqrt{t^2 + \epsilon^2}  \sum_{k = 0}^{n-1} f_k^2 T_k(t).
$$
The benefit of the proposed construction in terms of orthogonal polynomials on the hyperbola is that the orthogonality ensures that the interpolation is well-conditioned, whereas the standard approach results in {\it exponentially} bad conditioning, see Figure~\ref{fig:cond}.  Recent advances in approximation with frames can overcome the issues with ill-conditioning for the approximation problem (see \cite{MH} and references), but at the expense of no longer interpolating the data.  It remains open how to use these techniques for the solution of differential equations as in Example 3.

\subsubsection{Example 2: essential singularity} 

We now consider a function that is smooth in $t$ and $1/t$, for example,
$$
f(t) = \sin(t + 2/t)
$$
on the real line. We project $f$ to  the two-branch hyperbola  $x^2 = y^2 + 1$ by the change of variables $t = x-y$:
$$
f(x,y) = f(x-y) = \sin\left(  x- y +2(x + y)\right)
$$
where we use the fact that if $t = x-y$ then $t^{-1} = x+y$ since $t t^{-1} = (x-y) (x+y) = x^2 -y^2 = 1$. 

We repeat the procedure used in the previous example, this time with the Hermite weight:
\begin{enumerate}
\item Construct $Y_{n,1}(x,y)$ using Hermite polynomials, orthogonal with respect to $w(t) = e^{-t^2}$.
\item Construct $Y_{n,2}(x,y)$ using polynomials orthogonal with respect to $w(t) = (1+t^2) e^{-t^2}$. This construction is performed numerically using the Stieltjes procedure, with Gauss--Hermite quadrature to discretize the inner product.  
\item Calculate the coefficients of the interpolation polynomial $f_M(x,y)$, which equals $f(x,y)$  at the $M = 2 \nu$ points $\{(\pm x_j,y_j)\}_{j=1}^\nu$, where $y_j$ are the $\nu$ Gauss--Hermite quadrature points and $x_j = \sqrt{y_j^2 + 1}$, via Corollary~\ref{cor:twocutinterp}.
\item The function
$$
f_M(t) = f\left({t + t^{-1} \over 2}, {t - t^{-1} \over 2}\right)
$$
interpolates $f(t)$ at the $M$ points $\{ \pm x_j - y_j \}_{j=1}^\nu$. 
\end{enumerate}

\begin{figure}
 \begin{center}
 \includegraphics[width=.7\textwidth]{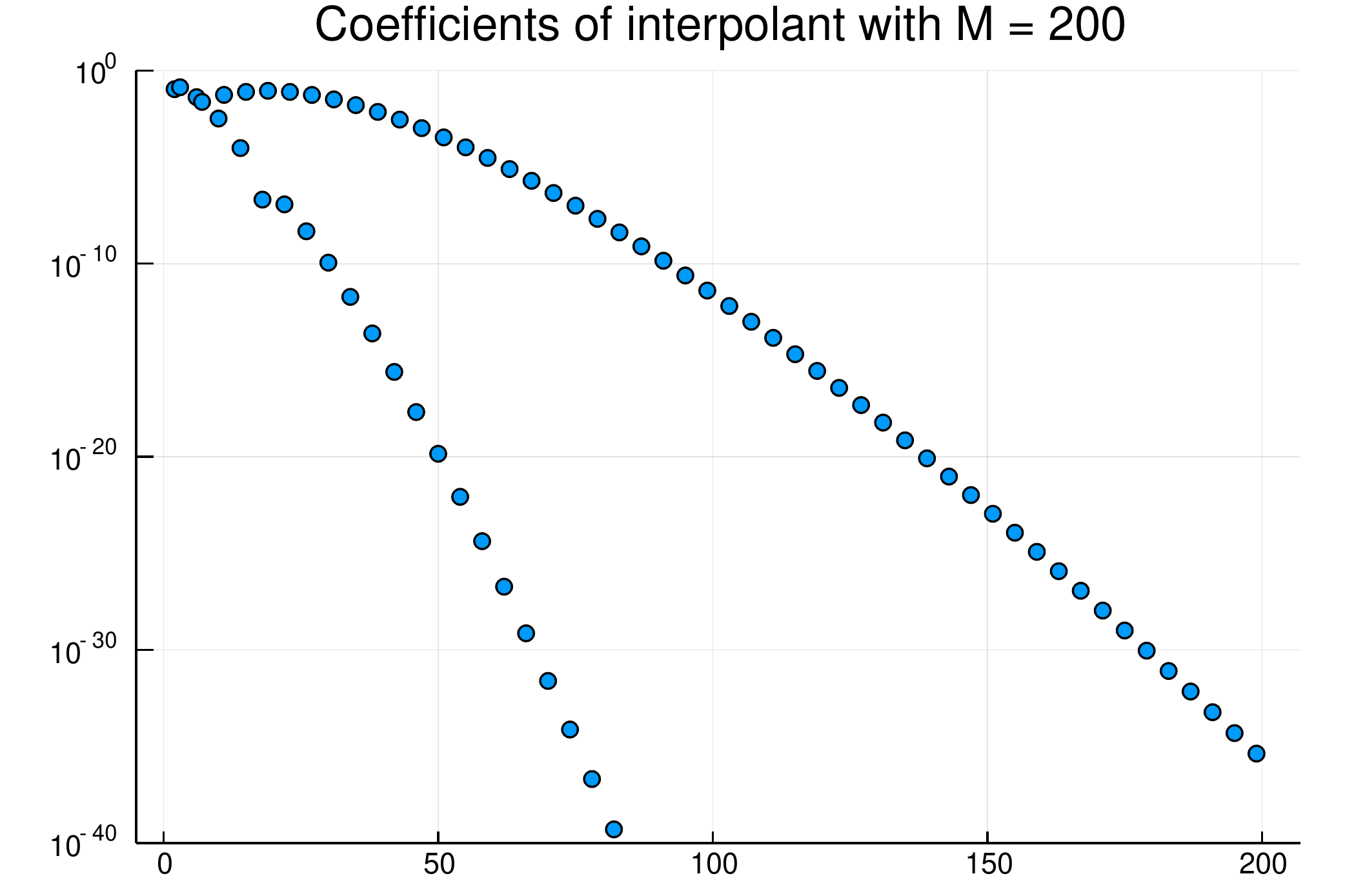}
  \includegraphics[width=.7\textwidth]{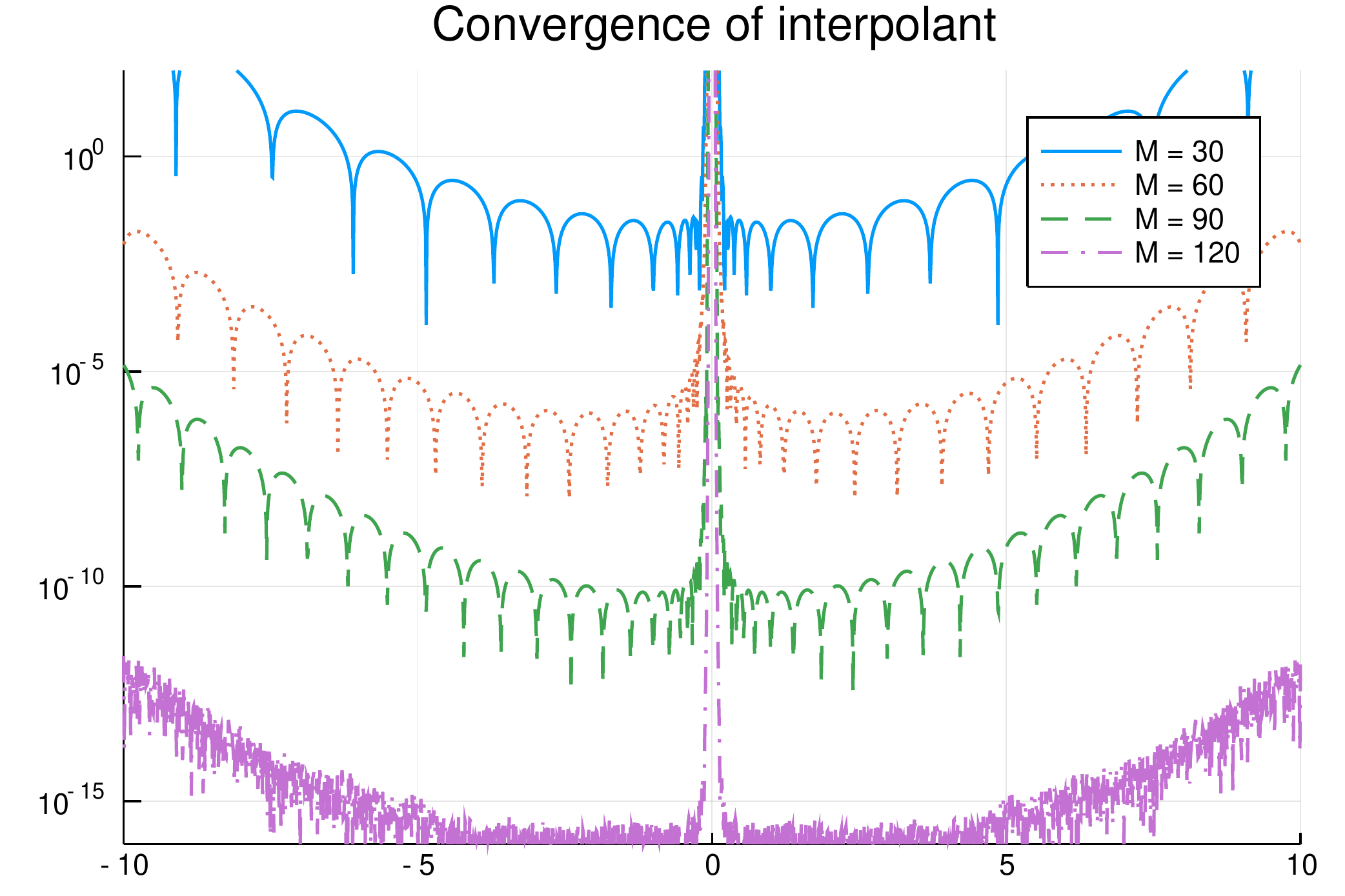}
  \caption{The coefficients of the 200 point interpolant of $\sin(x +2/x)$ (top), using high precision arithmetic. Convergence of the interpolants as $M$ increases (bottom).  }
  \label{fig:pole}
 \end{center}
\end{figure}

In Figure~\ref{fig:pole}, we plot the decay in  the coefficients  of the interpolating polynomial of   $\sin(t + 2/t) $ calculated via the proposed quadrature rule, showing an exponential decay rate. We also plot the pointwise convergence of the interpolant, showing exponentially fast uniform convergence in compact sets bounded  away from the singularity $t = 0$. Note the convergence is uniform everywhere (including at the singularity) in a weighted sense.

\subsection{Solving differential equations with singular variable coefficients}

As a final example, we consider the solution of Schr\"odinger's equation with a nearly singular well:
$$
-h^2 u''(t) + V(t) u =\lambda u
$$
where we take $h = 0.1$ and $V(t) = \sqrt{t^2 + \epsilon^2} + (t-0.1)^2$, and use Dirichlet conditions on the interval $-3 \leq t \leq 3$. As $\epsilon \rightarrow 0$ the potential becomes increasingly degenerate until it is non-differentiable and approaches $|t| + (t-0.1)^2$. For large $\epsilon$  and $\epsilon = 0$ standard spectral method techniques are applicable using Chebyshev or piecewise Chebyshev approximations as implemented in Chebfun (via {\tt quantumstates} command) \cite{ChebfunGuide} or ApproxFun.jl \cite{ApproxFun}. However, the computations quickly become infeasible as $\epsilon$ becomes small but non-zero: for example, with $\epsilon = 0.1$ we require roughly 1000 coefficients (hence the solution of a $1000 \times 1000$ eigenvalue problem) to calculate the smallest eigenvalue, while with $\epsilon = 0.01$ we were unable to succeed even using $5000$ coefficients.

As an alternative, we will use OPs on a hyperbola via the change of variables $t = y$ and $ x = \sqrt{t^2 + \epsilon^2}$ so that the potential is well-resolved by our basis. We then discretise the differential equation using a collocation system using $2n-2$ points as in the interpolation scheme above, where $2n$ is the total number of coefficients. For simplicity of implementation, the second derivatives of the basis are calculated using automatic-differentiation (via TaylorSeries.jl \cite{TaylorSeriesjl}), however, we could also achieve this analytically by differentiating the three-term recurrence: that is, if we have the recurrence coefficients of the univariate orthogonal polynomials we can differentiate the relationship
$$
(J-x) \begin{pmatrix} p_0(x) \\ p_1(x) \\ \vdots \end{pmatrix} = 0
$$
to find a recurrence relationship for the derivative,
$$
(J - x) \begin{pmatrix} p_0'(x) \\ p_1'(x) \\ \vdots \end{pmatrix} =  \begin{pmatrix} p_0(x) \\ p_1(x) \\ \vdots \end{pmatrix},
$$
and thereby determine $p_n'(x)$ by forward substitution.   Finally, we impose Dirichlet conditions as two extra rows, leading to a generalised finite-dimensional eigenvalue problem. For $\epsilon = 0.1$ we confirm the calculation matches that of a standard Chebyshev spectral method (to at least 10 digits).

\begin{figure}
 \begin{center}
 \includegraphics[width=.7\textwidth]{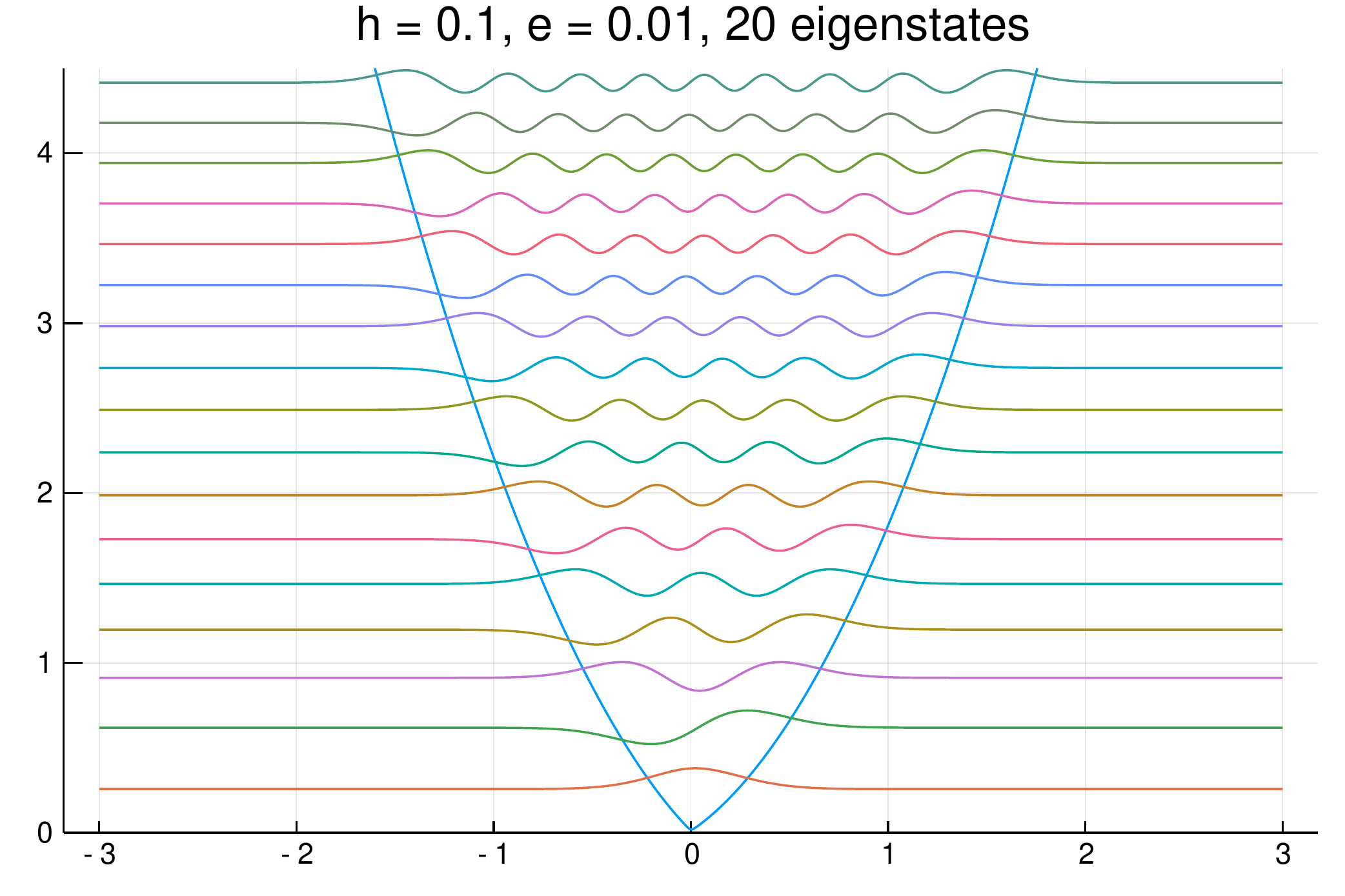}
  \caption{The first 20 eigenstates for Schr\"odinger equation with nearly singular potential $V(t) = \sqrt{t^2 + \epsilon^2} + (t-0.1)^2$  with $\epsilon = 0.01$.  The eigenstates   are shifted by the corresponding eigenvalue and scaled to fit inside the figure. }
  \label{fig:eigenstates}
 \end{center}
\end{figure}

\begin{figure}
 \begin{center}
 \includegraphics[width=.7\textwidth]{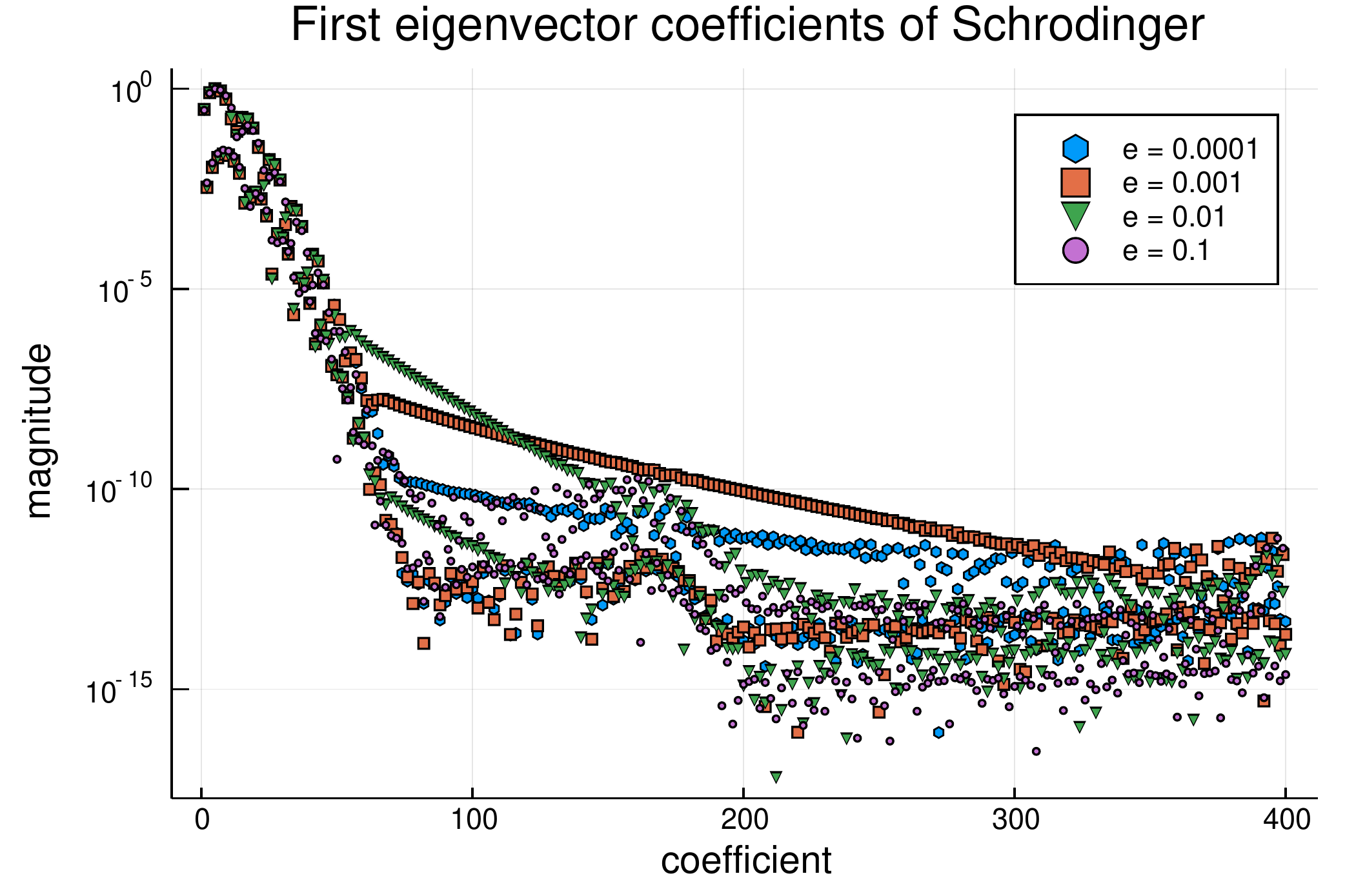}
  \caption{The coefficients of the smallest energy eigenstate in OPs on the hyperbola for Schr\"odinger equation with nearly singular potential $V(t) = \sqrt{t^2 + \epsilon^2} + (t-0.1)^2$ for $\epsilon = 0.1, 0.01, 0.001, 0.0001$, using a $400 \times 400$ collocation system.  }
  \label{fig:eigenfunctiondecay}
 \end{center}
\end{figure}

In Figure~\ref{fig:eigenstates} we plot the first 20 eigenstates with $\epsilon = 0.01$.  In Figure~\ref{fig:eigenfunctiondecay} we depict the calculated coefficients of the smallest energy eigenstate in OPs on the hyperbola. We can see that the first eigenstate continues to be resolved even for $\epsilon = 0.0001$, though there is some plateaux effect as $\epsilon \rightarrow 0$. While the decay rate of the plateaux degenerates, the maginitude of the plateaux improves  with smaller $\epsilon$ hence we continue to achieve high accuracy computations.

\section{Future work}

We have explored orthogonal polynomials on quadratic curves, and shown for weights satisfying certain symmetry properties that they can be constructed explicitly using  orthogonal polynomials in one variable. We have used these orthogonal polynomials as a basis to interpolate functions with singularities of the form $|x|$, $\sqrt{x^2 + \epsilon^2}$, and $1/x$, where the coefficients of the interpolant are determined via quadrature rules. Exponential or super-exponential convergence was observed in each case. We showed that the construction is also applicable to solving differential equations with nearly singular variable coefficients. 

Further applications of this work are to function approximation, quadrature rules, and solving differential equations involving other quadratic singularities,  for example, functions that are smooth in $x$ and $\sqrt x$ on $[0,1]$.  Such functions arise naturally in numerical methods for half-order Riemann--Liouville and Caputo fractional differential equations \cite{HO}. Orthogonal polynomials on a half-parabola---that is $y^2 = x$, $0 < x < 1$---would form a natural and convenient basis for approximating such functions. However, we are left with the task of constructing orthogonal polynomials for a weight that does not satisfy the requisite symmetry properties that allowed for reduction to univariate orthogonal polynomials.

Orthogonal polynomials on quadratic curves are also of use in partial differential equations on exotic geometries, for example, they have been recently used by Snowball and the first author for solving partial differential equations on disk slices and trapeziums \cite{SO}. 
 Finally, we mention that the results can be extended to higher dimensions, in particular to quadratic surfaces of revolution \cite{OX2}. It is also likely possible to reduce orthogonal polynomials on higher dimensional geometries satisfying suitable symmetry relationships to one-dimensional orthogonal polynomials, as is the case for multivariate orthogonal polynomials with weights that are invariant under symmetry reductions \cite{DX}. 

A natural question that arises is the structure of orthogonal polynomials on general (higher than quadratic) algebraic curves and surfaces. A closely related question is the structure of orthogonal polynomials inside algebraic curves and surfaces. It is unlikely that we will be able to reduce orthogonal polynomials on general algebraic geometries to one variable orthogonal polynomials, so other techniques will be necessary. Understanding this structure could lead to further applications in function approximation and numerical methods for partial differential equations, as well as theoretical results.

\bigskip\noindent
{\bf Acknowledgment.} We thank the referee for her/his carful reading and constructive comments.

\end{document}